\documentclass[onefignum,onetabnum,onealgnum]{siamonline250211}

\usepackage{amsfonts,amssymb}
\usepackage{mathtools}
\usepackage{algorithmic}
\usepackage{subcaption}
\usepackage{enumitem}
\usepackage{cite}
\setlist[enumerate]{leftmargin=.5in}
\setlist[itemize]{leftmargin=.5in}

\newsiamthm{assumption}{Assumption}
\newsiamremark{remark}{Remark}
\crefname{assumption}{Assumption}{Assumptions}
\crefname{remark}{Remark}{Remarks}
\AddToHook{env/assumption/begin}{\crefalias{theorem}{assumption}}
\AddToHook{env/remark/begin}{\crefalias{theorem}{remark}}

\newcommand{\bbE}{\mathbb{E}}
\newcommand{\bbP}{\mathbb{P}}
\newcommand{\bbR}{\mathbb{R}}
\newcommand{\bbS}{\mathbb{S}}
\newcommand{\cA}{\mathcal{A}}
\newcommand{\cD}{\mathcal{D}}
\newcommand{\cF}{\mathcal{F}}
\newcommand{\cL}{\mathcal{L}}
\newcommand{\cM}{\mathcal{M}}
\newcommand{\cO}{\mathcal{O}}
\newcommand{\cP}{\mathcal{P}}
\newcommand{\cR}{\mathcal{R}}
\newcommand{\cS}{\mathcal{S}}
\newcommand{\cT}{\mathcal{T}}
\newcommand{\bs}[1]{\boldsymbol{#1}}
\makeatletter
\let\liu@accentv\v
\DeclareRobustCommand{\v}[1]{\ifmmode\mathbf{#1}\else\liu@accentv{#1}\fi}
\let\liu@accentt\t
\DeclareRobustCommand{\t}{\ifmmode\tilde{\v t}\else\expandafter\liu@accentt\fi}
\makeatother
\newcommand{\n}{\tilde{\v n}}
\newcommand{\sff}{\mathrm{I\!I}}
\newcommand{\defeq}{\mathrel{\mathop:}=}
\newcommand{\disteq}{\buildrel d \over =}
\newcommand{\dd}[1]{\mathrm{d}#1}
\newcommand{\inpd}[2]{\langle #1\,,\, #2\rangle}
\newcommand{\abs}[1]{\lvert #1 \rvert}
\newcommand{\absmid}[1]{\big\lvert #1 \big\rvert}
\newcommand{\absbig}[1]{\Big\lvert #1 \Big\rvert}
\newcommand{\norm}[1]{\lVert #1 \rVert}
\newcommand{\normbig}[1]{\Big\lVert #1 \Big\rVert}
\newcommand{\evalmid}[2]{\Big\vert^{#1}_{#2}}
\newcommand{\Cov}{\mathrm{Cov}}
\DeclareMathOperator*{\argmin}{arg\,min}
\DeclareMathOperator{\dist}{dist}
\DeclareMathOperator{\Proj}{Proj}
\DeclareMathOperator{\tr}{tr}
\DeclareMathOperator{\vect}{vec}
\DeclareMathOperator{\diag}{diag}
\DeclareMathOperator{\Hess}{Hess}
\DeclareMathOperator{\median}{median}

\headers{Extrinsic Curvature Estimation by Closest Point Projection of Rays}{L. Liu and R. Tsai}

\title{Extrinsic Curvature Estimation by Closest Point Projection of Rays}

\author{Liangchen Liu\thanks{Department of Mathematics, the University of Texas at Austin, Austin, TX
  (\email{lcliu@utexas.edu}).}
\and Richard Tsai\footnotemark[1] \thanks{Oden Institute, the University of Texas at Austin, Austin, TX
  (\email{ytsai@math.utexas.edu}).}}

\ifpdf
\hypersetup{
  pdftitle={Extrinsic Curvature Estimation by Closest Point Projection of Rays},
  pdfauthor={L. Liu and R. Tsai}
}
\fi

\begin{document}
\maketitle

\begin{abstract}
We propose a parametrization-free framework for estimating the second fundamental form and the principal curvatures of a submanifold of $\mathbb{R}^d$. The method uses the closest point projections of a few short random rays and recovers the curvature from the curves that these projections induce on the submanifold, without coordinate charts or local fitting. We analyze the geometry of the induced curves and characterize when they are geodesics. For hypersurfaces, these results give two algorithms with error bounds: a least squares reconstruction of the second fundamental form from the curvatures of the induced curves, and a direct estimate of all principal curvatures from the eigenvalues of a $d\times d$ matrix built from $d$ rays, which needs no normal information and costs $\mathcal{O}(d^3)$ besides the projections. In higher codimension, we can also recover the second fundamental form one normal direction at a time. On point clouds, where the projection becomes a nearest neighbor query, we bound the additional error. Experiments confirm the predicted rates for hypersurfaces in up to $128$ dimensions, for surfaces of codimension up to $98$, and on point clouds, demonstrating the effectiveness of the proposed framework in practice.

\end{abstract}

\begin{keywords}
extrinsic curvatures, closest point projection, reach, second fundamental form, random rays, submanifolds of higher codimension, point clouds
\end{keywords}

\begin{MSCcodes}
53A07, 53A05, 65D18, 68U05
\end{MSCcodes}

\section{Introduction}\label{sec:curv:intro}

Understanding the geometry of smooth manifolds embedded in Euclidean space is a fundamental problem in differential geometry and its applications. Given a smooth submanifold $\cM\subset\bbR^d$, most simply a hypersurface, a central question is how to compute its extrinsic curvature---the second fundamental form and principal curvatures---from the embedding alone, without constructing explicit local coordinate charts or parametrizations.

Point clouds arising in applications, including imaging, genomics, and physical simulation, are often observed to concentrate near low-dimensional geometric structures~\cite{carlsson2009topology, dicarlo2012does, haghverdi2015diffusion, moon2019visualizing, pope2021intrinsic}, which is known as the manifold hypothesis~\cite{fefferman2016testing}. Exploiting geometric quantities of these structures, such as reach, intrinsic dimension, and Laplace--Beltrami operators, has been shown to improve learning and inference~\cite{belkin2004semi, allard2012multi, aamari2019estimating}, which highlights the importance of understanding the geometry of the underlying data distribution.

A natural tool for studying embedded manifolds is the \emph{closest point projection}, the map that sends each point in the ambient space to the nearest point on $\cM$. The \emph{reach} $\iota$ of $\cM$ is the largest number such that every point at distance less than $\iota$ from $\cM$ has a unique closest point on $\cM$~\cite{federer1959curvature}. It is well known that, within the reach of a $C^2$ hypersurface, this projection is well defined and continuously differentiable, and the tubular neighborhood of that width is diffeomorphic to $\cM\times(-\iota,\iota)$~\cite{federer1959curvature, leobacher2021existence}. The closest point projection and the associated tubular neighborhood structure have been exploited extensively in the numerical solution of partial differential equations on surfaces~\cite{ruuth2008simple, marz2012calculus, chen2015closest}, where they provide a principled way of extending surface-related quantities to the ambient space.

A fundamental geometric quantity, \emph{curvature} describes how a manifold bends and shapes the local behavior of distances and neighborhoods. In applied geometry, curvatures serve as structural descriptors in surface-related tasks such as fitting~\cite{wang2006fitting}, segmentation~\cite{rabbani2006segmentation}, and geometric inference~\cite{pauly2002efficient}. Moreover, curvature information can provide insight into both the robustness and interpretability of machine learning models. For instance, curvatures arise in the analysis of sample complexity, hardness of learning tasks, and generalization error~\cite{narayanan2010sample, shaham2018provable, kiani2024hardness, wang2024manifold}. They have also been linked to structural properties of the optimization landscape in the training of deep neural networks~\cite{chaudhari2019entropy, pope2021intrinsic, pouplin2023curvature}. Even in linear models, it has been observed in~\cite{he2023side, liu2023linear} that low intrinsic dimensionality combined with curvatures can influence regression behavior in nontrivial ways.

Among various notions of curvatures, \emph{extrinsic curvature} is the natural consideration when the manifold is embedded in Euclidean space and observed through ambient coordinates. A substantial body of work in computer vision and geometry processing has studied the estimation of extrinsic curvatures~\cite{berkmann1994computation, meek2000surface, rusinkiewicz2004estimating, lange2005anisotropic, merigot2010voronoi}, primarily in low dimensions and under structured representations such as triangular meshes or reconstructed surfaces. Extending such approaches to higher dimensions is challenging, as techniques effective in $\bbR^3$ often become computationally prohibitive or ill-posed~\cite{klee1980complexity}.
More broadly, many existing approaches proceed by introducing auxiliary local geometric structure, such as local parametric approximations~\cite{levin1998approximation, cazals2005estimating, lai2013local} or tangent space estimates obtained via local principal component analysis~\cite{hoppe1992surface, cheng2013local, aamari2019nonasymptotic, cao2021efficient}. While effective in moderate dimensions, these methods depend on the stable construction of local coordinate systems and accurate tangent information. Both requirements become increasingly fragile and computationally expensive as the ambient dimension grows.

Our approach avoids such local structure: it only uses the closest point projections of random rays, making it suited to high-dimensional ambient spaces. In~\cite{liu2024nearest}, rays were used to sample point clouds: a ray probes a point cloud through the closest point projection onto the point cloud, a single ray only records the Voronoi cells it visits, and statistical information about the point cloud comes from averages over many random rays. In this paper, we use rays in a different way. We assume that the points are sampled densely from a smooth manifold $\cM$, and we start the ray at a point close to $\cM$. As we move along the ray, its closest point on $\cM$ moves along $\cM$ and traces a curve, the curve induced by the ray. Near the starting point, this curve is determined by the curvature of $\cM$ at the closest point. Since the information is local, a few rays from one starting point are enough, and the directions of the rays are random only so that no normal information about $\cM$ is needed.

\subsection{Methodologies and setup}\label{sec:curv:setup}
Throughout the paper, except in \cref{sec:curv:codim}, let $\cM \subset \bbR^d$ be a compact, connected $C^2$ hypersurface with positive reach $\iota>0$. Our objective is to efficiently compute the extrinsic curvature of $\cM$.

Let $\v x_0 \in \bbR^d$ be a point sampled uniformly at random in a bounded region near $\cM$, and let $\v r \in \bbR^d$ be a direction. The set $\{\ell(h) \defeq \v x_0 + h\v r : h\ge 0\}$ defines a ray in $\bbR^d$ emitting from $\v x_0$. The closest point projection onto $\cM$ is defined by
\begin{equation}\label{eqn:curv:cp_def}
\cP_{\cM}\v x\defeq\argmin_{\v y\in\cM} \norm{\v x - \v y}.
\end{equation}
For every $\v x$ within the reach of $\cM$, the minimizer is unique; we write $\cP\defeq\cP_\cM$ when no ambiguity arises, and $\cP_\Gamma$ for the projection onto a point cloud $\Gamma$ (\cref{sec:curv:point_cloud}). Although the definition is the same as for point clouds, the projection behaves differently: $\cP_\Gamma$ is constant on each Voronoi cell, while $\cP$ is differentiable within the reach (\cref{sec:curv:cp_property}), and this is what lets a ray sense the curvature of $\cM$. Our method proceeds as follows:
\begin{enumerate}
  \item Let $\v y_0\defeq\cP\v x_0\in\cM$ be the closest point to $\v x_0$ on $\cM$.
  \item For small $h>0$, project the ray $\ell(h) = \v x_0 + h \v r$ passing through $\v x_0$ onto $\cM$.
  \item The projection $\{\cP(\ell(h)) = \cP(\v x_0+h\v r)\}$ induces a curve on $\cM$ as demonstrated in~\cref{fig:curv:local_frame}.
  \item Analyzing projections along a set of linearly independent $\v r$, we recover the curvatures of $\cM$ at $\v y_0$.
\end{enumerate}

\begin{figure}[htbp]
  \centering
  \begin{subfigure}{.5\textwidth}
    \centering
    \includegraphics[width=.75\linewidth]{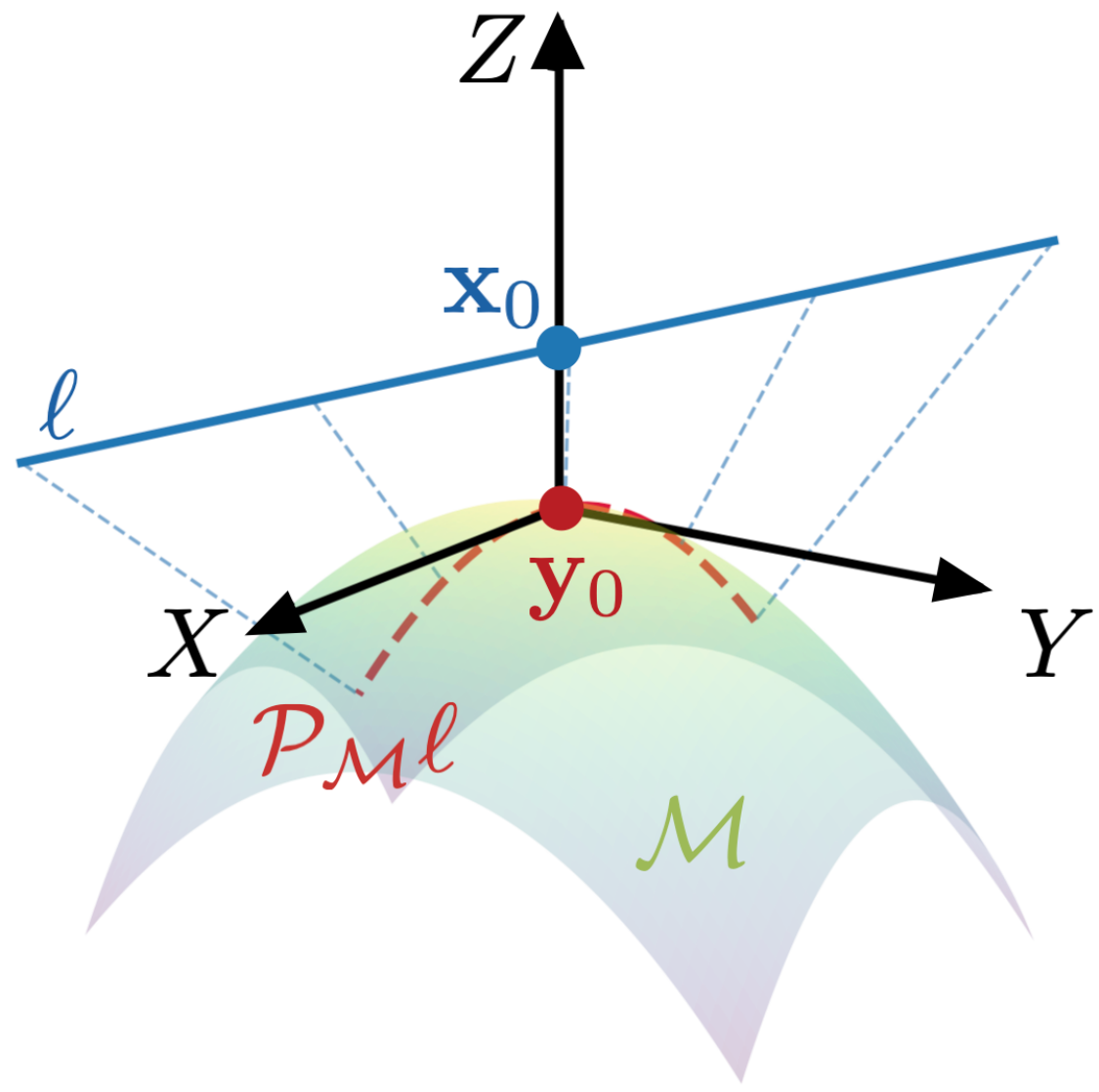}
    \caption{Viewed under a local frame.}
    \label{fig:curv:local_frame}
  \end{subfigure}%
  \begin{subfigure}{.5\textwidth}
    \centering
    \includegraphics[width=.9\linewidth]{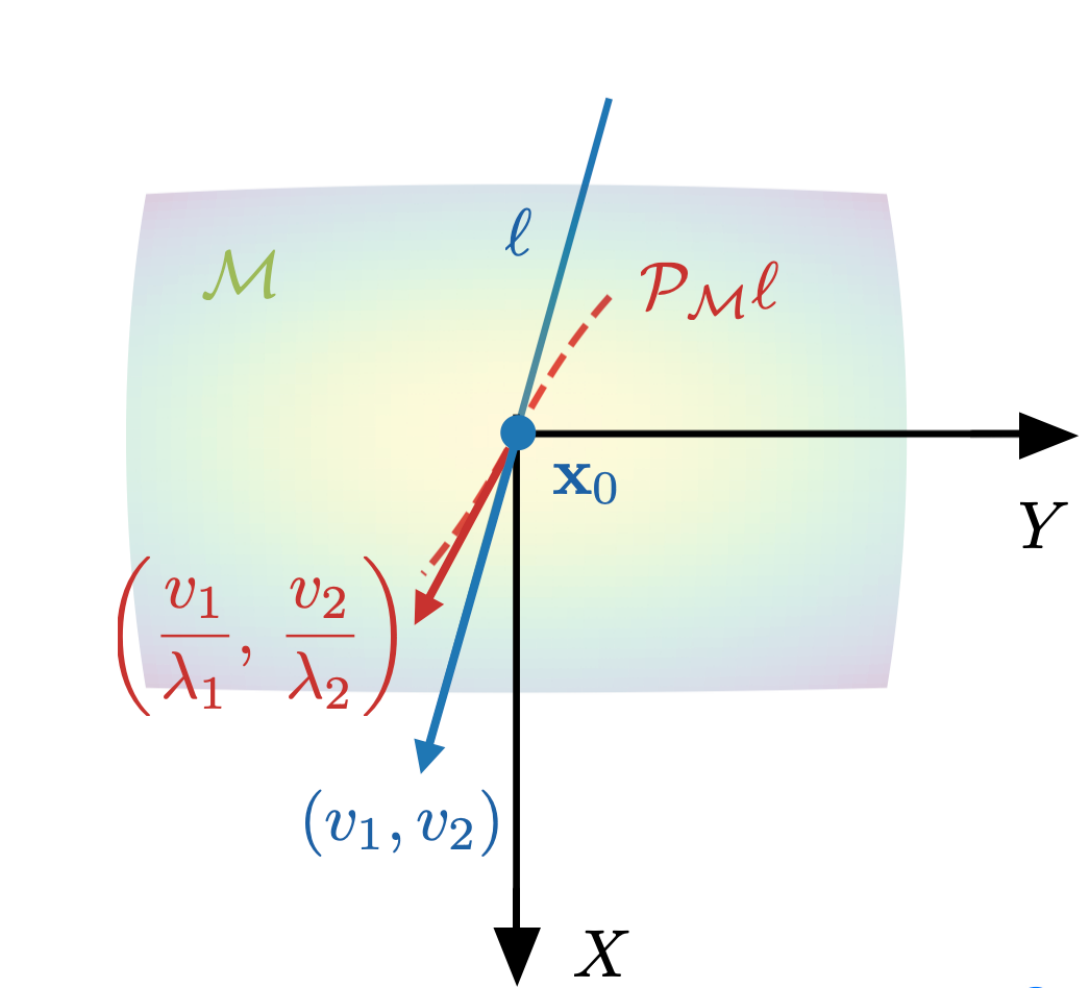}
    \caption{Top view of the projections (view of the tangent space).}
    \label{fig:curv:local_deviation}
  \end{subfigure}
  \caption{Closest point projection of $\ell$ onto $\cM$. The projection generates an induced curve $\cP\ell$ on $\cM$.}
  \label{fig:curv:cls_pt_proj_demo}
\end{figure}
The assumption for our setup is the following.
\begin{assumption}\label{asm:curv:standing}
$\cM\subset\bbR^d$ is a compact, connected $C^2$ hypersurface with reach $\iota>0$, and every ambient point at which $\cP$ is evaluated lies within distance $\iota$ of $\cM$.
\end{assumption}
We do not assume that $\cM$ is convex, although the surface in~\cref{fig:curv:cls_pt_proj_demo} is. Only the later \cref{sec:curv:convergence}, where $\v x_0$ moves far away from $\cM$, and the results that use it need such an assumption.

\subsection{Main contributions}
Our main contributions are summarized as follows:
\begin{enumerate}
\item \textbf{Geometric analysis of the closest point projection-induced curves.}
We show that the direction of the induced curve at $\v y_0$ is given exactly by the shape operator of $\cM$ and the offset distance, and that the normal part of its acceleration is the normal curvature of $\cM$. We establish that the induced curve is geodesic at $\v y_0\in\cM$ whenever the probing direction $\v r$ lies in the tangent space $\cT_{\v y_0}\cM$, or its projection onto the tangent space is a principal direction. In general, the induced curve only converges to a geodesic as the offset distance $\norm{\v x_0-\v y_0}\to\infty$, provided that $\cM$ is locally convex near $\v y_0$ and $\v x_0$ lies along the outward normal. We derive explicit expressions for the direction and curvature of this limiting geodesic, and quantify the deviation of the induced curve from being geodesic when these conditions are not satisfied. These statements hold to second order at $\v y_0$: the third-order terms of $\cM$ at $\v y_0$ add a geodesic curvature, which is small for small offsets but does not vanish as the offset grows.

\item \textbf{Parametrization-free curvature estimation.}
Building on the insights from the analysis of the induced curves, we develop two randomized approaches for estimating the second fundamental form and the principal curvatures directly from closest-point projections of one-dimensional rays, thereby avoiding the explicit construction of local coordinate charts.

\item \textbf{Error analysis and numerical validation.}
We establish explicit error bounds and convergence rates in terms of algorithmic hyperparameters, without explicit tangent-space recovery, with errors decaying at a controlled rate with high probability. We also provide numerical experiments to validate the theoretical rates and demonstrate the stability and scalability of the method on hypersurfaces up to $d = 128$. On point clouds, we bound the additional discretization error that comes from replacing the closest point projection by a nearest neighbor query.

\item \textbf{Submanifolds of higher codimension.} A probing point recovers the shape operator only in the normal direction along its offset. Using the structure of the closest point projection near a submanifold of any codimension~\cite{marz2012calculus,leobacher2021existence}, we place probing points along a basis of the normal space at one closest point and recover the second fundamental form one normal direction at a time. The idea follows the construction of closest point functions one codimension at a time by M\"arz and Macdonald~\cite{marz2012calculus}. Their closest point functions can replace the closest point projection, at the price of a curvature error of first order in the offset distance.

\item \textbf{A projection-based geometric sensing framework.}
Our approach introduces a new perspective for recovering geometric information through closest-point projections of randomized ambient probes. While the closest point projection is a classical tool in differential geometry and has been widely used in numerical methods for surface PDEs~\cite{ruuth2008simple, marz2012calculus}, its direct application to curvature recovery has not, to our knowledge, been previously explored. The present work lays the theoretical foundation for such an approach and may serve as a starting point for broader closest-point-projection-based methods for geometric analysis of embedded manifolds.
\end{enumerate}

We recall the extrinsic geometry that we use (\cref{sec:curv:prelim}), and we collect some useful properties of the closest point projection (\cref{sec:curv:cp_property}). We then study the curves induced by rays on a hypersurface (\cref{sec:curv:induced_curve}) and turn the results into two curvature algorithms with error bounds (\cref{sec:curv:curv_compute}). We extend them to submanifolds of higher codimension (\cref{sec:curv:codim}) and apply the second algorithm to point clouds (\cref{sec:curv:point_cloud}). We summarize the results in \cref{sec:curv:conclusion}. The main point of this paper is that the closest point projections of a few rays give the extrinsic curvature of a manifold in any dimension and codimension, without local coordinate charts or local fitting.

\section{Geometric preliminaries}\label{sec:curv:prelim}

This section collects the classical notions of extrinsic geometry that we use; proofs can be found in~\cite{o2006elementary,singer2015lecture}. Throughout, $\cM\subset\bbR^d$ is an embedded submanifold of class at least $C^2$, $\cT_{\v y}\cM$ is its tangent space at $\v y\in\cM$, and $\cT^\perp_{\v y}\cM\defeq(\cT_{\v y}\cM)^\perp$ is its normal space. Vectors are in bold, $\Proj_L$ is the orthogonal projection onto a linear subspace $L$, $\cD f$ is the derivative of a map $f$, and a hat marks an estimate, for example, $\hat\kappa_i$ is an estimate of $\kappa_i$.

\subsection{Hypersurfaces}\label{sec:curv:prelim_hyper}

Let $\cM$ be a hypersurface, i.e., $\dim\cM=d-1$. Near each of its points, $\cM$ is the zero level set of a $C^2$ function $F$ with $\nabla F\neq\v 0$ on $\cM$, and $\v n=\nabla F/\norm{\nabla F}$ is a unit normal vector field; the normal space at $\v y$ is spanned by $\v n(\v y)$.

\begin{definition}[Weingarten map and second fundamental form]\label{def:curv:2nd_fund}
Let $\v n$ be a unit normal vector field of $\cM$ around $\v y\in\cM$. The linear operator
\begin{equation}\label{eqn:curv:shape_operator}
S_{\v y}:\cT_{\v y}\cM\to\cT_{\v y}\cM,\qquad S_{\v y}\v v\defeq-\cD_{\v v}\v n(\v y),
\end{equation}
where $\cD_{\v v}$ is the directional derivative along $\v v\in\cT_{\v y}\cM$, is called the \emph{Weingarten map} or \emph{shape operator} of $\cM$ at $\v y$. The bilinear form
\[
\sff_{\v y}(\v v_1,\v v_2)\defeq\inpd{S_{\v y}\v v_1}{\v v_2},\qquad\v v_1,\v v_2\in\cT_{\v y}\cM,
\]
is the \emph{second fundamental form} of $\cM$ at $\v y$ (with respect to $\v n$).
\end{definition}

Differentiating $\inpd{\v n}{\v n}\equiv1$ along $\v v$ gives $\inpd{S_{\v y}\v v}{\v n(\v y)}=-\frac12\cD_{\v v}\inpd{\v n}{\v n}=0$, so $S_{\v y}$ indeed maps $\cT_{\v y}\cM$ into itself. Moreover, $S_{\v y}$ is self-adjoint, i.e., $\sff_{\v y}$ is symmetric. Its eigenvalues $\kappa_1,\dots,\kappa_{d-1}$, counted with multiplicity, are the \emph{principal curvatures} of $\cM$ at $\v y$, and an orthonormal basis of eigenvectors gives the \emph{principal directions}. For a unit vector $\v v\in\cT_{\v y}\cM$, $\sff_{\v y}(\v v,\v v)$ is the \emph{normal curvature} of $\cM$ in the direction $\v v$: if $\alpha$ is a unit-speed curve on $\cM$ with $\alpha(0)=\v y$ and $\alpha'(0)=\v v$, differentiating $\inpd{\alpha'}{\v n\circ\alpha}=0$ gives $\inpd{\alpha''(0)}{\v n(\v y)}=\sff_{\v y}(\v v,\v v)$. Reversing the orientation of $\v n$ changes the signs of $S_{\v y}$, $\sff_{\v y}$ and the $\kappa_i$.

In computations it is convenient to describe $\cM$ near a point as a graph over its tangent space.

\begin{proposition}[Local graph representation]\label{thm:curv:monge}
Let $\v y\in\cM$, and choose coordinates $(\v s,z)\in\bbR^{d-1}\times\bbR$ such that $\v y=\v 0$, $\cT_{\v 0}\cM=\bbR^{d-1}\times\{0\}$ and $\v n(\v 0)=\v e_d$. Then, in a neighborhood $U$ of $\v 0$,
\[
\cM\cap U=\big\{\big(\v s,f(\v s)\big)\big\}
\]
for a $C^2$ function $f$ defined near $\v 0\in\bbR^{d-1}$ with $f(\v 0)=0$ and $\nabla f(\v 0)=\v 0$, and:
\begin{enumerate}[label=(\roman*)]
  \item the shape operator at $\v 0$ is $S_{\v 0}=\Hess(f)(\v 0)$, acting on $\cT_{\v 0}\cM=\bbR^{d-1}$; in particular, the principal curvatures of $\cM$ at $\v 0$ are the eigenvalues of $\Hess(f)(\v 0)$;
  \item if the coordinates $s_1,\dots,s_{d-1}$ are taken along the principal directions, then
    \begin{equation}\label{eqn:curv:monge}
      z=f(\v s)=\frac12\sum_{i=1}^{d-1}\kappa_is_i^2+o(\norm{\v s}^2),
    \end{equation}
    and the remainder is $\cO(\norm{\v s}^3)$ if $\cM$ is of class $C^3$.
\end{enumerate}
\end{proposition}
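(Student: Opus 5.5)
The plan is to obtain the graph from the implicit function theorem and then compute the shape operator at the origin directly from an explicit formula for the normal field. Near $\v 0$, $\cM$ is the zero set of a $C^2$ function $F$ with $\nabla F(\v 0)\neq\v 0$. Since $\v n(\v 0)=\v e_d$, we have $\partial_z F(\v 0)=\norm{\nabla F(\v 0)}\neq 0$. The implicit function theorem then gives a neighborhood $U$ of $\v 0$ and a $C^2$ function $f$ with $f(\v 0)=0$ such that $\cM\cap U=\{(\v s,f(\v s))\}$. Because $\cT_{\v 0}\cM=\bbR^{d-1}\times\{0\}$, we have $\partial_{s_i}F(\v 0)=0$, and so $\nabla f(\v 0)=-\nabla_{\v s}F(\v 0)/\partial_zF(\v 0)=\v 0$.

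For (i), I will write the unit normal on the graph explicitly as
\[
\v n(\v s,f(\v s))=\frac{(-\nabla f(\v s),\,1)}{\sqrt{1+\norm{\nabla f(\v s)}^2}}.
\]
This field is $C^1$ and equals $\v e_d$ at $\v 0$, so it agrees with the given orientation near $\v 0$ (both orientations being continuous unit normals that coincide at $\v 0$).

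Let $\v v\in\bbR^{d-1}$ and take the curve $t\mapsto(t\v v,f(t\v v))$, whose velocity at $t=0$ is $(\v v,0)$. Differentiating $\v n$ along this curve at $t=0$ gives two contributions. The derivative of the normalizing factor is a multiple of $\inpd{\nabla f}{\Hess(f)\v v}$, which vanishes at $\v 0$ because $\nabla f(\v 0)=\v 0$. The derivative of the numerator is $(-\Hess(f)(\v 0)\v v,0)$. Hence
\[
S_{\v 0}\v v=-\cD_{\v v}\v n(\v 0)=\Hess(f)(\v 0)\v v,
\]
and the principal curvatures are the eigenvalues of this symmetric matrix.

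For (ii), choose an orthogonal change of the $\v s$-coordinates that diagonalizes $\Hess(f)(\v 0)$, so that it becomes $\diag(\kappa_i)$. Such a rotation of $\bbR^{d-1}$ keeps $\v e_d$ and $f(\v 0)=0$, $\nabla f(\v 0)=\v 0$ unchanged. Taylor's theorem with Peano remainder for the $C^2$ function $f$ gives
\[
f(\v s)=\tfrac12\,\v s^\top\Hess(f)(\v 0)\,\v s+o(\norm{\v s}^2)=\tfrac12\sum_i\kappa_is_i^2+o(\norm{\v s}^2).
\]
If $\cM$ is $C^3$, then $F$ and hence $f$ are $C^3$, and the Lagrange remainder on a compact neighborhood is bounded by $\sup\norm{D^3f}\,\norm{\v s}^3/6=\cO(\norm{\v s}^3)$.

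The only step needing care is the derivative of $\v n$ in (i), namely checking that the normalization term drops out at $\v 0$ and that the orientation matches; the rest is routine.
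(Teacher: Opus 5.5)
Your proposal is correct and follows the paper's proof essentially step for step: the implicit function theorem gives the graph, the derivative of the normalizing factor $\sqrt{1+\norm{\nabla f}^2}$ vanishes at $\v 0$ because $\nabla f(\v 0)=\v 0$, so $S_{\v 0}=\Hess(f)(\v 0)$, and Taylor's theorem after a rotation diagonalizing the Hessian gives (ii). Your explicit check that $\nabla f(\v 0)=-\nabla_{\v s}F(\v 0)/\partial_zF(\v 0)=\v 0$ and that the orientation of the graph normal matches $\v n$ only makes explicit what the paper takes for granted.
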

\begin{proof}
By the implicit function theorem, $\cM$ is locally the graph of a $C^2$ function $f$ over its tangent plane, and $f(\v 0)=0$, $\nabla f(\v 0)=\v 0$ because the graph passes through $\v 0$ and is tangent to $\bbR^{d-1}\times\{0\}$ there. With $F(\v s,z)=z-f(\v s)$ we have $\nabla F=(-\nabla f(\v s),1)$, so $\v n=\nabla F/\norm{\nabla F}$ satisfies $\v n(\v 0)=\v e_d$. For $\v v=(\v v',0)\in\cT_{\v 0}\cM$, the derivative of $\norm{\nabla F}^2=1+\norm{\nabla f}^2$ along $\v v$ is $2\inpd{\nabla f(\v 0)}{\Hess(f)(\v 0)\v v'}=0$ at $\v 0$. Hence
\[
S_{\v 0}\v v=-\cD_{\v v}\v n(\v 0)=-\cD_{\v v}\nabla F(\v 0)=\big(\Hess(f)(\v 0)\v v',\,0\big),
\]
which proves (i). Taylor's theorem gives $f(\v s)=\frac12\v s^T\Hess(f)(\v 0)\v s+o(\norm{\v s}^2)$, with remainder $\cO(\norm{\v s}^3)$ when $f$ is $C^3$. Since $\Hess(f)(\v 0)$ is symmetric, a rotation of the coordinates $\v s$ diagonalizes it, with the principal curvatures on the diagonal by (i).
\end{proof}

No sign condition is involved here: $\Hess(f)(\v 0)$ is symmetric but in general indefinite, and the principal curvatures may have different signs, as at a saddle point. If $\cM$ is locally convex near $\v y$, then $f$ is convex or concave near $\v 0$ and $\Hess(f)(\v 0)$ is semidefinite. In this work, we write the quadratic part of $f$ as $\v s^TA\v s$; thus $A=\frac12\Hess(f)(\v 0)$, and the principal curvatures are the eigenvalues of $2A$.

\subsection{Submanifolds of higher codimension}\label{sec:curv:prelim_codim}

Let $\cM$ have dimension $d_{\mathrm{in}}$ and codimension $d_{\mathrm{co}}=d-d_{\mathrm{in}}\ge1$. Every normal vector $\bs\nu\in\cT^\perp_{\v y}\cM$ defines a shape operator in the same way as for a hypersurface:
\begin{equation}\label{eqn:curv:shape_operator_codim}
S_{\bs\nu}:\cT_{\v y}\cM\to\cT_{\v y}\cM,\qquad S_{\bs\nu}\v v\defeq-\Proj_{\cT_{\v y}\cM}\cD_{\v v}\tilde{\bs\nu}(\v y),
\end{equation}
where $\tilde{\bs\nu}$ is any normal vector field near $\v y$ with $\tilde{\bs\nu}(\v y)=\bs\nu$; the result does not depend on the choice of $\tilde{\bs\nu}$, $S_{\bs\nu}$ is self-adjoint, and it is linear in $\bs\nu$~\cite{o2006elementary,leobacher2021existence}. The eigenvalues of $S_{\bs\nu}$ for a unit vector $\bs\nu$ are the principal curvatures of $\cM$ in the normal direction $\bs\nu$. The \emph{second fundamental form} is now vector valued: $\sff_{\v y}:\cT_{\v y}\cM\times\cT_{\v y}\cM\to\cT^\perp_{\v y}\cM$ is defined by
\begin{equation}\label{eqn:curv:sff_vector}
\inpd{\sff_{\v y}(\v v,\v w)}{\bs\nu}=\inpd{S_{\bs\nu}\v v}{\v w}\qquad\text{for all }\bs\nu\in\cT^\perp_{\v y}\cM,
\end{equation}
so that $\sff_{\v y}(\v v,\v w)=\sum_{j=1}^{d_{\mathrm{co}}}\inpd{S_{\bs\nu_j}\v v}{\v w}\bs\nu_j$ for an orthonormal basis $\bs\nu_1,\dots,\bs\nu_{d_{\mathrm{co}}}$ of $\cT^\perp_{\v y}\cM$, and $\cM$ is described to second order by the $d_{\mathrm{co}}$ symmetric matrices $S_{\bs\nu_j}$. For a hypersurface with unit normal $\v n$, $S_{\v n}=S_{\v y}$ and $\sff_{\v y}(\v v,\v w)=\inpd{S_{\v y}\v v}{\v w}\v n$, as in \cref{def:curv:2nd_fund}.

The local graph representation also extends. With coordinates $(\v s,\v z)\in\bbR^{d_{\mathrm{in}}}\times\bbR^{d_{\mathrm{co}}}$ such that $\v y=\v 0$ and $\cT_{\v 0}\cM=\bbR^{d_{\mathrm{in}}}\times\{\v 0\}$, $\cM$ is locally the graph $\v z=f(\v s)=(f_1(\v s),\dots,f_{d_{\mathrm{co}}}(\v s))$ of a $C^2$ map with $f(\v 0)=\v 0$ and $\cD f(\v 0)=0$. The proof of \cref{thm:curv:monge}, applied to each $F_j(\v s,\v z)=z_j-f_j(\v s)$, shows that the shape operator in the $j$-th normal coordinate direction is $S_{\v e_{d_{\mathrm{in}}+j}}=\Hess(f_j)(\v 0)$. Therefore, $f(\v s)=\frac12\sff_{\v 0}(\v s,\v s)+o(\norm{\v s}^2)$. The $d_{\mathrm{co}}$ matrices $\Hess(f_j)(\v 0)$ need not commute, so in general no single rotation of $\v s$ diagonalizes all of them.

Let $\alpha$ be a $C^2$ curve on $\cM$ with $\alpha(0)=\v y$, in any parametrization. Differentiating $\inpd{\alpha'}{\tilde{\bs\nu}\circ\alpha}=0$ at $0$ gives $\inpd{\alpha''(0)}{\bs\nu}=\inpd{S_{\bs\nu}\alpha'(0)}{\alpha'(0)}$ for every normal vector $\bs\nu$, that is,
\begin{equation}\label{eqn:curv:meusnier}
\Proj_{\cT^\perp_{\v y}\cM}\alpha''(0)=\sff_{\v y}\big(\alpha'(0),\alpha'(0)\big).
\end{equation}
So all curves on $\cM$ through $\v y$ with the same velocity have the same normal part of the acceleration at $\v y$, and they differ only in the tangential part (Meusnier's theorem). For a unit-speed curve, $\sff_{\v y}(\alpha'(0),\alpha'(0))$ is the \emph{normal curvature vector} of $\cM$ in the direction $\alpha'(0)$, and the tangential part $\Proj_{\cT_{\v y}\cM}\alpha''(0)$, which is orthogonal to $\alpha'(0)$, is the \emph{geodesic curvature vector} of $\alpha$; the curve is a \emph{geodesic} if its geodesic curvature vanishes along it. For a hypersurface, the normal part is $\kappa_n\v n$ with the normal curvature $\kappa_n=\sff_{\v y}(\alpha'(0),\alpha'(0))$ of \cref{sec:curv:prelim_hyper}.

\section{Properties of the closest point projection}\label{sec:curv:cp_property}

In this section, we collect several well-known properties of the closest point projection $\cP$ that will be used throughout the development. By \cref{asm:curv:standing}, all ambient points lie within the reach of $\cM$, so $\cP$ is uniquely defined and continuously differentiable~\cite{federer1959curvature, marz2012calculus}. We include the short proofs.

\begin{proposition}\label{thm:curv:cp_normal}
For any $\v x$ within the reach of $\cM$, $\v x - \cP\v x$ lies along the normal of $\cM$ at $\cP\v x$.
\end{proposition}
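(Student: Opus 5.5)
We will use the first-order optimality condition for the minimization problem \cref{eqn:curv:cp_def}. Fix $\v x$ within the reach, so that $\v y_0\defeq\cP\v x$ is the unique minimizer of $g(\v y)\defeq\norm{\v x-\v y}^2$ over $\v y\in\cM$; we work with the squared distance so that $g$ is smooth on all of $\bbR^d$, including the case $\v x\in\cM$. We will show that $\v x-\v y_0$ is orthogonal to $\cT_{\v y_0}\cM$. Since $\cT^\perp_{\v y_0}\cM$ is spanned by $\v n(\v y_0)$ for a hypersurface, this gives the claim, and the same argument yields $\v x-\v y_0\in\cT^\perp_{\v y_0}\cM$ in higher codimension.

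Take an arbitrary $\v v\in\cT_{\v y_0}\cM$ and a $C^1$ curve $\alpha:(-\epsilon,\epsilon)\to\cM$ with $\alpha(0)=\v y_0$ and $\alpha'(0)=\v v$. Such a curve exists from the local graph representation \cref{thm:curv:monge} (or its higher codimension analogue in \cref{sec:curv:prelim_codim}): in the coordinates there, with $\v y_0=\v 0$ and $\v v=(\v v',0)$, take $\alpha(t)=(t\v v',f(t\v v'))$, and use $\nabla f(\v 0)=\v 0$ to get $\alpha'(0)=\v v$. The function $\phi(t)\defeq g(\alpha(t))=\norm{\v x-\alpha(t)}^2$ is differentiable and, because $\alpha(t)\in\cM$ and $\v y_0$ minimizes $g$ on $\cM$, it attains a local minimum at $t=0$. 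Hence
\[
0=\phi'(0)=-2\inpd{\v x-\v y_0}{\alpha'(0)}=-2\inpd{\v x-\v y_0}{\v v}.
\]
Since $\v v\in\cT_{\v y_0}\cM$ was arbitrary, $\v x-\v y_0\perp\cT_{\v y_0}\cM$, so $\v x-\v y_0$ is a scalar multiple of $\v n(\v y_0)$.

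There is no real obstacle. The only points needing care are using $\norm{\cdot}^2$ instead of $\norm{\cdot}$, so that differentiability holds even when $\v x\in\cM$ (where the statement is trivial anyway), and noting that only the existence of a minimizer on $\cM$ is used. Uniqueness from the reach assumption is what makes $\cP\v x$ well defined, but the orthogonality holds at any minimizer.
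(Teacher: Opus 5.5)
Your proof is correct. Both arguments rest on the first-order necessary condition for minimizing $\norm{\v x-\v y}^2$ over $\cM$, but they reach it differently. The paper takes a local level-set function $F$ and invokes the Lagrange multiplier rule to get $\v x-\cP\v x=c\,\nabla F(\cP\v x)$. You derive the condition directly: restricting the squared distance to curves in $\cM$ through $\v y_0$ shows that $\v x-\v y_0$ is orthogonal to every tangent vector.

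Your route is the more elementary one, since the multiplier rule is itself proved by your curve argument together with $\cT_{\v y_0}\cM=\ker\cD F(\v y_0)$. It needs neither a defining function nor the regularity condition $\nabla F\neq\v 0$. It also applies verbatim in any codimension, where the paper has to pass to a vector-valued $F$ and a vector of multipliers (\cref{sec:curv:codim_projection}).

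The paper's formulation, in turn, states the conclusion directly as a multiple of $\nabla F$. That is exactly the form used in the proof of \cref{thm:curv:parametrization}, namely $\ell(h)=\alpha(h)+c(h)\nabla F$. From your orthogonality statement this takes one more line, since $\nabla F(\v y_0)$ spans $\cT^\perp_{\v y_0}\cM$.
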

\begin{proof}
Let $F$ denote a local level-set function for $\cM$ in a neighborhood of $\cP\v x$. Then $\nabla F$ is a local choice of the normal vector field of $\cM$. On the other hand, the closest point projection $\cP$ is characterized as the solution to the constrained minimization problem
\[
\min_{\v y}\norm{\v x-\v y}^2, \quad \text{subject to } F(\v y)=0.
\]
The associated Lagrangian is $\cL(\v y) = \norm{\v x-\v y}^2 + \lambda F(\v y)$. A necessary condition for a minimizer $\v y^*$ is therefore
\[
\nabla_{\v y}\cL(\v y^*) = 2(\v y^* - \v x) + \lambda \nabla F(\v y^*) = 0  \text{ for some } \lambda \implies \v x - \cP \v x = c\nabla F(\cP \v x),
\]
for some scalar $c$, which shows that the displacement is normal to $\cM$ at $\cP\v x$.
\end{proof}

The distance to $\cM$ is $\dist(\v x,\cM)\defeq\min_{\v y\in\cM}\norm{\v x-\v y}$; within the reach of $\cM$, it equals $\norm{\v x - \cP\v x}$. It satisfies the following.
\begin{proposition}[The distance to the surface is $1$-Lipschitz]\label{thm:curv:cp_contract}
For any $\v x_1,\v x_2 \in \bbR^d$,
\begin{equation}\label{eqn:curv:dist_lipschitz}
\big|\, \dist(\v x_1,\cM) - \dist(\v x_2,\cM) \,\big| \;\le\; \norm{\v x_1-\v x_2}.
\end{equation}
\end{proposition}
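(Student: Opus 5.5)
The plan is the standard triangle inequality argument, using only the definition $\dist(\v x,\cM)=\min_{\v y\in\cM}\norm{\v x-\v y}$. Since $\cM$ is compact (\cref{asm:curv:standing}) and $\v y\mapsto\norm{\v x-\v y}$ is continuous, this minimum is attained for every $\v x\in\bbR^d$. The claim therefore makes sense for all $\v x_1,\v x_2\in\bbR^d$, not only those within the reach. Uniqueness of the minimizer, and hence the operator $\cP$, plays no role; we only need some minimizer to exist.

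\emph{Step 1.} Fix $\v x_1,\v x_2$ and let $\v y_2\in\cM$ be a point attaining $\dist(\v x_2,\cM)=\norm{\v x_2-\v y_2}$. By the definition of the minimum and the triangle inequality,
\[
\dist(\v x_1,\cM)\le\norm{\v x_1-\v y_2}\le\norm{\v x_1-\v x_2}+\norm{\v x_2-\v y_2}=\norm{\v x_1-\v x_2}+\dist(\v x_2,\cM).
\]
Hence $\dist(\v x_1,\cM)-\dist(\v x_2,\cM)\le\norm{\v x_1-\v x_2}$.

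\emph{Step 2.} Exchange the roles of $\v x_1$ and $\v x_2$, choosing a minimizer $\v y_1$ for $\v x_1$. This gives the reverse inequality $\dist(\v x_2,\cM)-\dist(\v x_1,\cM)\le\norm{\v x_1-\v x_2}$. Combining the two one-sided bounds yields \eqref{eqn:curv:dist_lipschitz}.

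There is no real obstacle. The only point needing care is existence of a minimizer for points outside the reach, and compactness settles it. Alternatively, one could work with an infimum and an $\varepsilon$-minimizer $\v y_2$ with $\norm{\v x_2-\v y_2}\le\dist(\v x_2,\cM)+\varepsilon$, then let $\varepsilon\to0$; this avoids compactness entirely.
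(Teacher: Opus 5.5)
Your proof is correct and is essentially the paper's argument: take a closest point of $\v x_2$ on $\cM$, apply the triangle inequality, and then swap the roles of $\v x_1$ and $\v x_2$. Your remark on the existence of a minimizer, via compactness or an $\varepsilon$-minimizer, matches the paper's note that the proof uses only that $\cM$ is closed.
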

\begin{proof}
By the definition of $\dist$, $\dist(\v x, \cM)\le \norm{\v x - \v y}$ for any $\v y\in\cM$. Therefore
\[
\norm{\v x_1 - \v y} \le \norm{\v x_1 - \v x_2} + \norm{\v x_2-\v y}
\implies \norm{\v x_1 - \v x_2} \ge \norm{\v x_1 - \v y} - \norm{\v x_2 - \v y}.
\]
Setting $\v y$ to be a closest point of $\v x_2$ on $\cM$, we have
$\norm{\v x_1 - \v x_2} \ge \norm{\v x_1 - \v y} - \dist(\v x_2, \cM) \ge \dist(\v x_1, \cM)- \dist(\v x_2, \cM)$.
Using symmetry, we also obtain the reverse inequality.
\end{proof}
The proof uses only that $\cM$ is closed. For a point cloud $\Gamma$, it shows that $\dist(\cdot,\Gamma)$ is $1$-Lipschitz, which is why the distance to $\Gamma$ recorded along a ray in~\cite{liu2024nearest} is continuous.

The following property of $\cP$ ensures that small perturbations along $\ell$ induce small displacements of the closest point on $\cM$.
\begin{proposition}[$\cP$ is continuous]\label{thm:curv:cp_cont}
If $\v x_n \to \v x$ within the reach of $\cM$ in $\bbR^d$, then $\cP \v x_n \to \cP \v x$ in $\bbR^d$.
\end{proposition}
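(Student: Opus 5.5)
We argue by compactness plus uniqueness of the minimizer, the standard subsequence argument for continuity of an argmin map. Fix $\v x$ within the reach and a sequence $\v x_n\to\v x$, with each $\v x_n$ also within the reach so that $\cP\v x_n$ is uniquely defined by \cref{asm:curv:standing}. Set $\v y_n\defeq\cP\v x_n\in\cM$. Since $\cM$ is compact, the sequence $(\v y_n)$ is bounded and every subsequence has a further subsequence converging to some $\v y^*$. Because $\cM$ is closed, $\v y^*\in\cM$.

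The key step is to show that any such limit $\v y^*$ is a minimizer of $\norm{\v x-\cdot}$ over $\cM$. By \cref{thm:curv:cp_contract}, $\dist(\v x_n,\cM)\to\dist(\v x,\cM)$. On the other hand, $\norm{\v x_n-\v y_n}=\dist(\v x_n,\cM)$ by definition of $\cP$. Passing to the limit along the subsequence, continuity of the norm gives
\[
\norm{\v x-\v y^*}=\lim\norm{\v x_{n_k}-\v y_{n_k}}=\lim\dist(\v x_{n_k},\cM)=\dist(\v x,\cM).
\]
Hence $\v y^*$ attains the minimum in \cref{eqn:curv:cp_def}.

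Since $\v x$ lies within the reach, the minimizer is unique, so $\v y^*=\cP\v x$. This holds for every convergent subsequence. A bounded sequence in $\bbR^d$ all of whose convergent subsequences share the same limit converges to that limit; otherwise some subsequence would stay $\varepsilon$ away from $\cP\v x$, and by compactness it would have a further subsequence with a limit different from $\cP\v x$, which is a contradiction. Therefore $\cP\v x_n\to\cP\v x$.

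The main point needing care is uniqueness. Without it, the argument only shows that limit points of $(\v y_n)$ are closest points, not that they equal a single one. Uniqueness is exactly what the reach hypothesis supplies. Compactness of $\cM$ is used only to extract convergent subsequences. If one wanted to avoid it, one could instead use boundedness, $\norm{\v y_n}\le\norm{\v x_n}+\dist(\v x_n,\cM)$, together with closedness of $\cM$.
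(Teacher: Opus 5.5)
Your proof is correct and follows the paper's argument: you extract convergent subsequences, identify the limit via $\norm{\v x-\v y^*}=\lim\dist(\v x_{n_k},\cM)=\dist(\v x,\cM)$, and conclude by uniqueness of the closest point within the reach. The only differences are that you get boundedness from the compactness of $\cM$, whereas the paper bounds $\norm{\cP\v x_n}$ by the triangle inequality (your suggested alternative), and that you spell out the sub-subsequence step, which the paper leaves implicit.
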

\begin{proof}
First note that $\cP \v x_n$ is bounded uniformly. Indeed,
\begin{align*}
\norm{\v x_n - \cP \v x_n} &\le \norm{\v x_n - \cP \v x} \le \norm{\v x_n - \v x} + \norm{\v x - \cP \v x} = \norm{\v x_n - \v x} + \dist(\v x, \cM),\\
\implies \norm{\cP \v x_n} &\le \norm{\v x_n - \cP \v x_n} + \norm{\v x_n}
\le \dist(\v x, \cM) +\max_{n}  \norm{\v x_n - \v x} + \max_{n} \norm{\v x_n}.
\end{align*}
The last two terms are bounded since $\v x_n\to\v x$. Therefore, invoking the Bolzano--Weierstrass theorem, $\cP\v x_n$ has a convergent subsequence; denote it again by $\cP\v x_n$ for simplicity, and let its limit be $\v y\in\cM$. We can now identify $\v y$ using continuity of the distance:
\[ \norm{\v x - \v y} = \lim_{n\to\infty} \norm{\v x_n - \cP\v x_n} = \lim_{n\to\infty} \dist(\v x_n, \cM) = \dist(\v x, \cM).\]
By definition of $\cP$, the unique point in $\cM$ achieving this distance is $\cP\v x$, which shows $\v y = \cP\v x$. Thus every convergent subsequence of $\{\cP\v x_n\}$ converges to $\cP\v x$, implying $\cP\v x_n\to\cP \v x$.
\end{proof}

The reach also controls how fast the distance from $\v x$ to a point $\v z\in\cM$ grows as $\v z$ moves along $\cM$ away from the closest point $\cP\v x$.
\begin{proposition}[Quadratic growth within the reach]\label{thm:curv:quad_growth}
Let $\v x$ be within the reach of $\cM$, $\v y = \cP\v x$ and $\zeta = \norm{\v x-\v y}<\iota$. Then for every $\v z\in\cM$,
\begin{equation}\label{eqn:curv:quad_growth}
\norm{\v x-\v z}^2 \ge \zeta^2+\Big(1-\frac{\zeta}{\iota}\Big)\norm{\v z-\v y}^2 .
\end{equation}
\end{proposition}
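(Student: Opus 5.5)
We expand the square and reduce the claim to a bound on the normal component of $\v z-\v y$. Write $\v x=\v y+\zeta\v n$, where $\v n$ is the unit normal at $\v y$ pointing toward $\v x$; this is possible by \cref{thm:curv:cp_normal}. For every $\v z\in\cM$,
\[
\norm{\v x-\v z}^2=\zeta^2-2\zeta\inpd{\v z-\v y}{\v n}+\norm{\v z-\v y}^2 .
\]
So \eqref{eqn:curv:quad_growth} is equivalent to
\[
\inpd{\v z-\v y}{\v n}\le\frac{1}{2\iota}\norm{\v z-\v y}^2\qquad\text{for all }\v z\in\cM .
\]
This is Federer's classical characterization of the reach.

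To prove it, we use that every point of the open normal segment of length $\iota$ projects back to $\v y$. For $0<t<\iota$, set $\v x_t=\v y+t\v n$. We claim $\cP\v x_t=\v y$ and $\dist(\v x_t,\cM)=t$.

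For $t\le\zeta$ this is elementary. By the triangle inequality, any $\v z$ with $\norm{\v x_t-\v z}<t$ would satisfy $\norm{\v x-\v z}<\zeta$, which is impossible.

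For $\zeta<t<\iota$ we use a continuation argument. Let $T=\sup\{t<\iota:\cP\v x_s=\v y\text{ for all }s\le t\}$, and suppose $T<\iota$.
\begin{itemize}
\item By \cref{thm:curv:cp_cont}, the set of $t$ with $\cP\v x_t=\v y$ is closed in $[0,\iota)$, so $\cP\v x_T=\v y$.
\item For $t$ slightly larger than $T$, the point $\v y_t=\cP\v x_t$ differs from $\v y$ but tends to $\v y$.
\item By \cref{thm:curv:cp_normal}, $\v x_t-\v y_t$ is normal at $\v y_t$, so $\v x_t=\v y_t+\tau_t\v n(\v y_t)$.
\item Hence the normal map $(\v w,\tau)\mapsto\v w+\tau\v n(\v w)$ takes the same value at $(\v y,t)$ and at $(\v y_t,\tau_t)$, with both preimages converging to $(\v y,T)$.
\end{itemize}
This contradicts local injectivity of the normal map at $(\v y,T)$, which follows from the continuous differentiability of $\cP$ within the reach cited in the paper (a $C^1$ inverse $\v w+\tau\v n(\v w)\mapsto(\cP,\ \text{signed distance})$). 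Alternatively, one can invoke the standard fact from~\cite{federer1959curvature} that $\cP(\v y+t\v n)=\v y$ for $|t|<\iota$. I expect this step to be the main obstacle. In the write-up I would cite Federer for it rather than reprove it, noting that uniqueness of projections within the reach is exactly what is used.

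Given the claim, the conclusion follows quickly. For every $\v z\in\cM$ and $t<\iota$, $\norm{\v x_t-\v z}^2\ge t^2$. Expanding gives
\[
-2t\inpd{\v z-\v y}{\v n}+\norm{\v z-\v y}^2\ge0,
\]
that is, $\inpd{\v z-\v y}{\v n}\le\norm{\v z-\v y}^2/(2t)$. Letting $t\uparrow\iota$ yields the normal-component bound above. Substituting it into the first expansion gives
\[
\norm{\v x-\v z}^2\ge\zeta^2-\frac{\zeta}{\iota}\norm{\v z-\v y}^2+\norm{\v z-\v y}^2,
\]
which is \eqref{eqn:curv:quad_growth}.
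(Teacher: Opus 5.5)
Your argument is correct, but you reach the key inequality $\inpd{\v z-\v y}{\v n}\le\norm{\v z-\v y}^2/(2\iota)$ by a different route than the paper.

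The paper cites Federer's characterization of the reach \cite[Thm.~4.18]{federer1959curvature}, namely $\dist(\v z-\v y,\cT_{\v y}\cM)\le\norm{\v z-\v y}^2/(2\iota)$. It then bounds $\inpd{\v x-\v y}{\v z-\v y}\le\zeta\dist(\v z-\v y,\cT_{\v y}\cM)$; the final expansion is the same as yours. You instead use an empty-ball argument: $\v y+t\v n$ projects to $\v y$ for every $t<\iota$, so the open ball of radius $t$ about it misses $\cM$, and letting $t\uparrow\iota$ gives the bound along the one normal you need.

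This trades Federer's tangent-cone theorem for the more elementary fact that normal segments shorter than $\iota$ project to their foot (\cite[Thm.~4.8(12)]{federer1959curvature}, which the paper itself uses later in \cref{thm:curv:dP_codim}(i)). Your continuation argument even sketches a proof of that fact. The paper's citation is shorter and applies verbatim in any codimension. Your continuation step uses a single normal field and a signed distance, so it is specific to hypersurfaces.

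If you write that step out, state the injectivity argument carefully. Let $\Phi(\v w,\tau)=\v w+\tau\v n(\v w)$ and $\Psi=(\cP,\text{signed distance})$. What \cref{thm:curv:cp_normal} gives for free is $\Phi\circ\Psi=\mathrm{id}$ on the tubular neighborhood. The reverse identity $\Psi\circ\Phi=\mathrm{id}$ is what you are proving, so it cannot be assumed. Differentiating $\Phi\circ\Psi=\mathrm{id}$ at $\v x_T$ shows that $\cD\Phi(\v y,T)$ is invertible, since both sides are maps between $d$-dimensional spaces. The inverse function theorem then gives the local injectivity of $\Phi$ at $(\v y,T)$ that your contradiction requires.
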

\begin{proof}
By Federer's characterization of the reach~\cite[Thm.~4.18]{federer1959curvature},
\begin{equation}\label{eqn:curv:federer}\dist(\v z-\v y,\cT_{\v y}\cM)\le \frac{\norm{\v z-\v y}^2}{2\iota},\end{equation}
 for all $\v z\in\cM$. Since $\v x-\v y$ is a normal vector of length $\zeta$ at $\v y$, this gives \[\inpd{\v x-\v y}{\v z-\v y}\le\zeta\dist(\v z-\v y,\cT_{\v y}\cM)\le\zeta\frac{\norm{\v z-\v y}^2}{2\iota}.\] Hence
\[\norm{\v x-\v z}^2 = \zeta^2-2\inpd{\v x-\v y}{\v z-\v y}+\norm{\v z-\v y}^2 \ge \zeta^2+\Big(1-\frac{\zeta}{\iota}\Big)\norm{\v z-\v y}^2.\]
\end{proof}

\section{Investigating the induced curve}\label{sec:curv:induced_curve}

As outlined in~\cref{sec:curv:setup}, our approach generates a curve on $\cM$ induced by the closest point projection of a ray from the embedding space, and parameterized by the step size $h$ along the ray. The next step is to investigate this induced curve and understand how its curvature relates to the curvatures of the underlying manifold $\cM$.

\subsection{Geodesics contain information about the surface curvature}\label{sec:curv:geodesic}
Let $\cM \subset \bbR^{d}$ be the hypersurface in $\bbR^d$ and let $\alpha \subset \cM$ be a curve parameterized by \emph{arc length} $\cS$ where $\alpha(0) = \v y_0 \in \cM$, and denote by $\v n : \cM \to \bbR^{d}$ the surface normal vector field. At $\v y_0$, the tangent vector of the curve is $\alpha'(0)$, which is also tangent to $\cM$. Thus, $\inpd{\v n(\v y_0)}{\alpha'(0)}=0$, where $\inpd{\cdot}{\cdot}$ is the Euclidean inner product. Consequently, a local orthonormal frame at $\v y_0$ is given by the generalized Darboux frame
\begin{equation}\label{eqn:curv:darboux}
\big\{\alpha'(0),\,
\v v_2,\,\dots,\,\v v_{d-1},\,\v n(\v y_0)
\big\},
\end{equation}
where $\{\alpha'(0),\,\v v_2,\,\dots,\,\v v_{d-1}\}$ forms an orthonormal basis for the tangent space $\cT_{\v y_0}\cM$.
Under this frame, the principal normal $\alpha''(0)/\norm{\alpha''(0)}$ of $\alpha$ at $\v y_0$ lies in the two-dimensional plane spanned by $\v n$ and a unit tangent vector $\v g$ orthogonal to $\alpha'(0)$. Accordingly, $\alpha''(0)$ admits the decomposition
\begin{equation}\label{eqn:curv:curv_decomp}
  \alpha''(0) = \kappa_n(0)\v n + \kappa_g(0)\v g,
\end{equation}
where $\kappa_n$ and $\kappa_g$, denoting the \emph{normal curvature} and \emph{geodesic curvature}, are components of $\alpha''(0)$ on the normal space and the tangent space, respectively. The normal curvature depends only on the direction $\alpha'(0)$: it is $\sff_{\v y_0}(\alpha'(0),\alpha'(0))$ (\cref{def:curv:2nd_fund} and the discussion following it). More generally, for any parametrization of $\alpha$, the normal part of $\alpha''(0)$ is $\sff_{\v y_0}(\alpha'(0),\alpha'(0))\v n$ by Meusnier's theorem \eqref{eqn:curv:meusnier}, whether or not $\alpha$ is a geodesic.
The magnitude of $\alpha''(0)$ is the \emph{total curvature}:
\begin{equation}\label{eqn:curv:total_curvature}
    \kappa(0) = \norm{\alpha''(0)} = \sqrt{\kappa_n^2(0) + \kappa_g^2(0)}.
\end{equation}
A curve $\alpha$ is a \emph{geodesic} precisely when $\kappa_g \equiv 0$. Thus, if $\kappa_g(0) = 0$, then
$\alpha''(0)/\norm{\alpha''(0)}= \pm\v n(\v y_0)$,
and the total curvature $\kappa(0)$ equals $\abs{\kappa_n(0)}$, which coincides with the magnitude of the normal curvature of $\cM$ in the direction of $\alpha'(0)$.

Therefore, if one can obtain a curve $\alpha$ that is locally geodesic at $\alpha(0)=\v y_0$, then computing the curvature of this curve at $\alpha(0)$ reveals the normal curvature of the manifold at $\v y_0$.

\subsection{Relationships between the induced curve and geodesics}\label{sec:curv:hypersurface}
A natural question arises: is the induced curve obtained from the closest point projection a local geodesic? In this section, we analyze the explicit parameterized expression of the induced curve and study its geometric properties.

Consider the random ray $\{\ell(h)\defeq\v x_0 + h\v r\}$ and choose a coordinate frame such that $\v y_0 =\cP\v x_0= \v 0_d$ and $\ell(0)=\v x_0=(\v 0_{d-1},\, \zeta)$ with $\zeta = \norm{\v x_0-\v y_0}$.
By~\cref{thm:curv:cp_normal}, this is also a Darboux frame. We further align the basis so that moving from $\v x_0$ by $h\v r$ produces
\begin{equation}\label{eqn:curv:l_h}
    \ell(h)= (h,\, 0, \,\dots, \,0,\, \zeta + \gamma h),
\end{equation}
where $\gamma\in\bbR$ signifies the slope of the ray relative to $\cT_{\v y_0}\cM$ (we scale $\v r$ so that its tangential part has unit length).
We study the induced curve $\alpha(h) \defeq \cP(\ell(h))$. By~\cref{thm:curv:cp_cont}, it stays close to $\v y_0$ for small $h$, so a local representation of $\cM$ near $\v y_0$ suffices.
To express $\alpha(h)$ explicitly, one must solve the closest point condition
\begin{equation}\label{eqn:curv:cls_pt}
     \alpha(h) = \argmin_{\v y \in \cM} \norm{\ell(h) - \v y},
\end{equation}
which in turn requires an explicit local representation of $\cM$.
Within a sufficiently small neighborhood of $\v y_0$, the hypersurface $\cM$ under the above tangent-normal coordinate frame admits a Monge representation whose second-order truncation is the osculating paraboloid (\cref{thm:curv:monge})
\begin{equation}\label{eqn:curv:m_graph}
z(\v s) = \v s^T A \v s, \quad \v s \in \bbR^{d-1},
\end{equation}
where $A \in \bbR^{(d-1)\times(d-1)}$ is symmetric. By \cref{thm:curv:monge}, its eigenvalues $\lambda_i$ are half the principal curvatures of $\cM$ at $\v y_0$, taken with respect to the normal $\v e_d$ pointing toward $\v x_0$. Since no convexity assumption is imposed here, $A$ may be indefinite; the principal curvatures are bounded in absolute value by $1/\iota$ for $\iota$ the reach.
Since the curvatures of $\cM$ at $\v y_0$ are determined by the second-order local geometry, we first work with this quadratic model in a small neighborhood of $\v y_0$: we replace the hypersurface locally by its osculating paraboloid and derive the explicit parameterization of $\alpha$.

\begin{lemma}[Local parametrization of the induced curve]\label{thm:curv:parametrization}
Under the coordinate frame and the local graph model of $\cM$ described above, the closest point induced curve admits the local representation
\begin{equation}\label{eqn:curv:alpha_param}
\alpha(h) =\left ( \big(I - 2c(h) A \big)^{-1} h \v e_1,\; \cO(h^2) \right ),
\end{equation}
where $c(h) = \zeta + \gamma h + \cO(h^2)$ and $\v e_1 = (1,\v 0_{d-2})$.
\end{lemma}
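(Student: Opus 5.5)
We prove the formula by writing out the first-order optimality condition for the closest point on the quadratic model \eqref{eqn:curv:m_graph} and solving it for the tangential coordinates. Write $\alpha(h)=(\v s(h),z(\v s(h)))$ with $z(\v s)=\v s^TA\v s$. By \cref{thm:curv:cp_cont}, $\alpha(h)\to\v 0$ as $h\to0$. Hence for small $h$ the minimizer in \eqref{eqn:curv:cls_pt} lies in the neighborhood where the graph representation holds, and $\v s(h)\to\v 0$. By \cref{thm:curv:cp_normal}, $\ell(h)-\alpha(h)$ is parallel to the normal $(-\nabla z(\v s),1)=(-2A\v s,1)$ at $\alpha(h)$. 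Equivalently, $\v s(h)$ is a critical point of $\v s\mapsto\norm{\ell(h)-(\v s,\v s^TA\v s)}^2$. Differentiating and using \eqref{eqn:curv:l_h} gives
\[
(\v s-h\v e_1)-2\big(\zeta+\gamma h-\v s^TA\v s\big)A\v s=\v 0 .
\]

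Set $c(h)\defeq\zeta+\gamma h-\v s(h)^TA\v s(h)$. The condition becomes $(I-2c(h)A)\v s(h)=h\v e_1$. The matrix $I-2c(h)A$ is invertible for small $h$. Indeed, $c(h)\to\zeta<\iota$, and the eigenvalues $\lambda_i$ of $A$ satisfy $\abs{2\lambda_i}\le1/\iota$, so $1-2c\lambda_i\ge1-c/\iota>0$. This yields $\v s(h)=(I-2c(h)A)^{-1}h\v e_1$.

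It remains to identify the orders of $c$ and of the normal coordinate. The inverse is uniformly bounded near $h=0$, so $\norm{\v s(h)}=\cO(h)$. Hence $\v s^TA\v s=\cO(h^2)$, which gives $c(h)=\zeta+\gamma h+\cO(h^2)$. For the same reason the last coordinate $z(\v s(h))=\v s^TA\v s=\cO(h^2)$. This is exactly \eqref{eqn:curv:alpha_param}.

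The main obstacle is rigor about which critical point we have. The critical-point equation is implicit in $\v s$ through $c(h)$, and it could a priori have several solutions. We resolve this by not solving the equation directly. Instead we start from the true minimizer, whose existence and uniqueness within the reach are given, together with its continuity from \cref{thm:curv:cp_cont}. Then we merely read off the equation it satisfies. The bounded inverse, which rests on $\zeta<\iota$, turns this into the a priori bound $\v s=\cO(h)$.

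If one insists on the true $C^2$ surface rather than the paraboloid, the same argument goes through with $\nabla f(\v s)=2A\v s+o(\norm{\v s})$. This only perturbs the $\cO(\cdot)$ terms, and the lemma is in any case stated for the quadratic model.
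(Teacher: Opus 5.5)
Your proof is correct and follows essentially the same route as the paper. The stationarity condition you differentiate is exactly the normal condition $\ell(h)=\alpha(h)+c(h)\nabla F$ of \cref{thm:curv:cp_normal} written in graph coordinates; from there you obtain $(I-2c(h)A)\v s(h)=h\v e_1$ with $c(h)=\zeta+\gamma h-\v s^TA\v s$, invertibility from $\abs{2c\lambda_i}<1$, and the bootstrap $\v s=\cO(h)\Rightarrow c=\zeta+\gamma h+\cO(h^2)$, all as in the paper, and your ordering (using continuity of $\cP$ first so that $c(h)\to\zeta$ before relying on the bounded inverse) makes explicit a step the paper leaves implicit.
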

\begin{proof}
Let $\alpha(h) = \cP(\ell(h)) = \big(\v s(h),\, z(\v s(h)) \big)$. Under the local graph model \eqref{eqn:curv:m_graph}, $\cM$ is described as the zero level set of $F(\v s,z)=z-\v s^T A\v s$, with $\nabla F(\v s, z) = (-2A\v s,\, 1)$ being the normal vector field on $\cM$.
On the other hand, \cref{thm:curv:cp_normal} implies $\ell(h) = \alpha(h) + c(h)\,\nabla F\big(\v s(h),\, z(h)\big)$ for some scalar function $c(h)$.
Using the local uniqueness of the closest-point projection within the reach, this relation becomes
\[
\begin{dcases}
   \v s(h) - 2c(h) A \v s(h) = h \v e_1, \\
   \v s(h)^TA\v s(h) + c(h) = \zeta + \gamma h,
\end{dcases}\implies \begin{dcases}
   \big(I - 2c(h) A\big)\v s(h) = h\v e_1, \\
   c(h) = \zeta + \gamma h - \v s(h)^T A\v s(h).
\end{dcases}
\]
The matrix $I - 2c(h)A$ is invertible provided $1/\big(2c(h)\big)$ is not an eigenvalue of $A$. Since the eigenvalues of $A$ are bounded by $1/(2\iota)$ while $c(h)\approx\zeta<\iota$, the product $2c(h)\lambda_i(A)$ satisfies $\abs{2c(h)\lambda_i(A)}<1$, ensuring invertibility within the reach; if $A$ is negative definite, it is invertible for every offset $\zeta>0$. Thus, the above equation admits a unique solution
\[
\v s(h) = \big(I - 2c(h)A\big)^{-1} h \v e_1.
\]
Because $A$ is symmetric, $I - 2c(h)A$ is also symmetric, and its invertibility ensures its eigenvalues remain bounded away from $0$. Hence $\normbig{\big(I - 2c(h)A\big)^{-1}}_2$ is bounded above, and
$\norm{\v s(h)}_2 \le \normbig{\big(I - 2c(h)A\big)^{-1}}_2\abs{h} = \cO(h)$.
By~\cref{thm:curv:cp_cont}, $z(h)\to 0$ as $h\to 0$, so $z(h)=\v s(h)^TA\v s(h)=\cO(h^2)$ for sufficiently small $h$, and therefore $c(h) = \zeta + \gamma h + \cO(h^2)$.
Substituting $\v s(h)$ and $z(h)$ completes the parametrization of $\alpha$.
\end{proof}

The first-order part of \cref{thm:curv:parametrization} depends only on the second-order geometry of $\cM$ at $\v y_0$, so it holds for $\cM$ itself and not only for its osculating paraboloid. In terms of the derivative of the closest point projection, it indicates that $\cD\cP(\v x_0)$ maps the tangential part $\v e_1$ of a ray to $(I-2\zeta A)^{-1}\v e_1$ and the normal $\v e_d$ to $\v 0$, that is,
\begin{equation}\label{eqn:curv:dP_hyper}
\cD\cP(\v x_0) = (I-\zeta S)^{-1}\Proj_{\cT_{\v y_0}\cM},\qquad S = 2A,
\end{equation}
where $S$ is the shape operator of $\cM$ at $\v y_0$ with normal pointing to $\v x_0$ (\cref{thm:curv:monge}). This identity holds at every offset within the reach, and it is the basis of the direct estimation in \cref{sec:curv:recon_direction}.

With the parameterized $\alpha(h)$ established, we now characterize when the induced curve is locally geodesic. The resulting criterion in~\cref{thm:curv:local_geodesic} not only identifies these cases, but also yields a practical way to generate local geodesics from the normal information of $\cM$.

\begin{theorem}\label{thm:curv:local_geodesic}
The curve $\alpha$ induced by the closest point projection of $\ell$ onto $\cM$ in the local graph model \eqref{eqn:curv:m_graph}, that is, onto the osculating paraboloid of $\cM$ at $\v y_0$, is a local geodesic at $\v y_0$ (that is, $\kappa_g(0)=0$) if and only if the direction $\v r$ of $\ell$ lies in the tangent space $\cT_{\v y_0}\cM$, or the tangential part $\v e_1$ of $\v r$ is a principal direction of $\cM$ at $\v y_0$. In particular, if $A$ is a multiple of the identity, as at every point of a sphere, every ray induces a local geodesic.
\end{theorem}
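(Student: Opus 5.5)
The plan is to compute $\alpha'(0)$ and $\alpha''(0)$ from \cref{thm:curv:parametrization} and read off the geodesic curvature. Because $\alpha$ is parametrized by $h$ rather than by arc length, I first record a parametrization-invariant criterion. After reparametrizing by arc length, the tangential part of the acceleration is $\Proj_{\cT_{\v y_0}\cM}\alpha''(0)$ minus its component along $\alpha'(0)$, divided by $\norm{\alpha'(0)}^2$. The component along $\alpha'(0)$ is exactly what the reparametrization changes. Hence $\kappa_g(0)=0$ if and only if $\Proj_{\cT_{\v y_0}\cM}\alpha''(0)$ is parallel to $\alpha'(0)$, provided $\alpha'(0)\neq\v 0$. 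In the chosen frame, $\cT_{\v y_0}\cM$ is the $\v s$-coordinate plane, and the $z$-component of $\alpha$ is $\cO(h^2)$, so it contributes only to the normal part. It therefore suffices to differentiate $\v s(h)$.

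Set $M(h)=(I-2c(h)A)^{-1}$ and $B=M(0)=(I-2\zeta A)^{-1}$. The function $c(h)$ is $C^1$, either by the implicit function theorem applied to the system in the proof of \cref{thm:curv:parametrization} or directly from $c=\zeta+\gamma h-\v s^TA\v s$, and $c'(0)=\gamma$. Since $\v s(h)=hM(h)\v e_1$, we get $\v s'(0)=B\v e_1\neq\v 0$ and $\v s''(0)=2M'(0)\v e_1$. Higher derivatives of $c$ do not enter at $h=0$. Using $M'=2c'\,MAM$, this gives
\[
\v s''(0)=4\gamma\, BAB\,\v e_1=4\gamma\, AB^2\v e_1 ,
\]
where $A$ and $B$ commute because $B$ is a function of $A$.

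So $\kappa_g(0)=0$ if and only if $\gamma=0$, or $AB^2\v e_1=\mu B\v e_1$ for some scalar $\mu$. The case $\gamma=0$ is exactly $\v r\in\cT_{\v y_0}\cM$. In the other case, cancel the invertible commuting factor $B$ to get $AB\v e_1=\mu\v e_1$. Expand $\v e_1=\sum_i a_i\v u_i$ in an orthonormal eigenbasis of $A$ with eigenvalues $\lambda_i$. The condition becomes
\[
\frac{\lambda_i}{1-2\zeta\lambda_i}=\mu \quad\text{for every } i \text{ with } a_i\neq0 .
\]
The map $\lambda\mapsto\lambda/(1-2\zeta\lambda)$ has derivative $(1-2\zeta\lambda)^{-2}>0$. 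It is therefore injective on the admissible range $\abs{2\zeta\lambda}<1$ guaranteed by the reach, and also for all $\lambda<0$ when $A$ is negative definite. Hence all $\lambda_i$ with $a_i\neq0$ coincide, so $\v e_1$ is an eigenvector of $A$, i.e.\ a principal direction. Conversely, if $A\v e_1=\lambda\v e_1$, then $AB^2\v e_1$ is a multiple of $\v e_1$, which is parallel to $B\v e_1$.

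For the sphere, $A$ is a multiple of the identity, so every $\v e_1$ is an eigenvector and every ray induces a local geodesic. I expect the main obstacle to be making the reparametrization step rigorous. This requires justifying that $\v s$ is twice differentiable at $0$ (via $C^1$ regularity of $c$) and stating the geodesic-curvature criterion cleanly for a non-unit-speed curve. The linear algebra after that is routine.
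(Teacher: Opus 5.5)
Your proposal is correct and follows essentially the same route as the paper. Both arguments reduce $\kappa_g(0)=0$ to $\v s''(0)\parallel\v s'(0)$: the paper does this through the Frenet--Serret decomposition of $\alpha''$, and you do it through arc-length reparametrization, which is the same observation. Both then compute $\v s''(0)=4\gamma(I-2\zeta A)^{-1}A(I-2\zeta A)^{-1}\v e_1$ and conclude with the eigenbasis expansion and the injectivity of $\lambda\mapsto\lambda/(1-2\zeta\lambda)$. Your remarks on the $C^1$ regularity of $c$ and on $\v s'(0)\neq\v 0$ fill in small points the paper leaves implicit.
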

\begin{proof}
Let $\n$ denote the principal normal vector field of the curve $\alpha$, as defined in~\cref{sec:curv:geodesic}, where we also learn that $\alpha$ is a local geodesic at $\v y_0$ only when $\n(\v y_0)=\pm\v n(\v y_0)$.
Recall $\alpha$ is parameterized by $h$. By the Frenet--Serret formulas~\cite{jordan1874theorie}, one has
\begin{equation}\label{eqn:curv:frenet}
    \alpha''(h) = v'(h)\t(h) + \kappa(h)v^2(h)\n(h),
\end{equation}
where $\t$ is the unit tangent vector field and $v\defeq\dd{\cS}/\dd{h}$ for arc length $\cS$. On the other hand, by~\cref{thm:curv:parametrization},
\begin{equation}\label{eqn:curv:app}
  \alpha''(h) = \big(\v s''(h), \,z''(h)\big) = v'(h)\t(h) + \kappa(h)v(h)^2 \n(h).
\end{equation}
Under the Darboux frame in~\cref{sec:curv:hypersurface}, $\v n(\v y_0)=\v n(\v 0)=(\v 0_{d-1},\,1)$, and the tangent $\t(0)$ must have the final coordinate vanished. Indeed, using $\alpha'(0)=\big(\v s'(0), \,0\big)$ from~\cref{thm:curv:parametrization}, we have $v(0)\t(0) = \big(\v s'(0), \,0\big)$.
If $\alpha$ were a local geodesic at $\v y_0$, it is required that $\n(0)=\pm\v n(\v 0)$. From~\eqref{eqn:curv:app}, we conclude that
\begin{equation}\label{eqn:curv:geodesic_cond}
  v'(0)\,\t(0) = \big(\v s''(0),\, 0\big).
\end{equation}
Since $v' = \inpd{\alpha''}{\alpha'}/\norm{\alpha'}$, the left-hand side is the projection of $\alpha''(0)$ onto $\alpha'(0)$, so \eqref{eqn:curv:geodesic_cond} translates to
\begin{equation}\label{eqn:curv:proj_cond}
  \big(\v s''(0),\,0\big) = \Proj_{(\v s'(0),\,0)} \big(\v s''(0),\, z''(0)\big),
\end{equation}
which holds if and only if $\v s''(0)$ is parallel to $\v s'(0)$, i.e., $\v s''(0) = \beta\, \v s'(0)$ for some scalar $\beta$. A direct computation using the parametrization from~\cref{thm:curv:parametrization} yields
\begin{equation}\label{eqn:curv:s_derivatives}
\v s'(0) = (I - 2\zeta A)^{-1}\v e_1, \quad \v s''(0) = 4\gamma \,(I-2\zeta A)^{-1} A (I-2\zeta A)^{-1}\v e_1.
\end{equation}
If $\gamma = 0$, then $\v s''(0) = \v 0$ and the condition holds. If $\gamma\ne0$, write $\v e_1 = \sum_j a_j\v q_j$ in an orthonormal eigenbasis of $A$ with eigenvalues $\lambda_j$. Since $(I-2\zeta A)^{-1}$ and $A$ commute, the condition requires $\lambda_j/(1-2\zeta\lambda_j)$ to take the same value for all $j$ with $a_j\ne0$, and since $\lambda\mapsto\lambda/(1-2\zeta\lambda)$ is injective, $\lambda_j$ are constant for these $j$. This happens exactly when $\v e_1$ lies in one eigenspace of $A$, for instance $\v e_1 = \v q_j$ for one $j$, or equivalently when $\v e_1$ is a principal direction. In particular, if $A$ is a multiple of the identity, as on a sphere, this holds for every $\v e_1$. Otherwise, the induced curve is not a geodesic at $\v y_0$.
\end{proof}

\subsection{Convergence to geodesics}\label{sec:curv:convergence}
Next, we further investigate the behavior of the induced curve $\alpha$ when the generator ray $\ell$ is not tangent to $\cM$. Before proceeding, we state the following auxiliary result for our subsequent exploration.

\begin{lemma}[Bounds on the rotation angle induced by a definite matrix]\label{thm:curv:rotation_angle}
For a negative definite matrix $A\in\bbR^{d\times d}$ with condition number $\tau$, the rotation it induces on any nonzero $\v x\in\bbR^d$ satisfies
\begin{equation}\label{eqn:curv:rotation_bound}
  -1 \le \cos(\theta) = \frac{\inpd{\v x}{A\v x}}{\norm{\v x}\,\norm{A\v x}}  \le \frac{-2}{\sqrt{\tau}+\frac{1}{\sqrt{\tau}}},
\end{equation}
implying the rotation angle $\theta$ lies in $[\pi/2,\,\pi]$. An analogous bound with reversed sign holds for positive definite $A$.
\end{lemma}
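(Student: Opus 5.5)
This is the classical Kantorovich-type bound on the cosine between $\v x$ and $A\v x$. The plan is to reduce to the positive definite case and then apply a Cauchy--Schwarz / Kantorovich argument in an eigenbasis.

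\textbf{Reduction.} Set $B=-A$, which is symmetric positive definite with the same condition number $\tau=\lambda_{\max}/\lambda_{\min}$. Then
\[
\cos\theta=-\frac{\inpd{\v x}{B\v x}}{\norm{\v x}\,\norm{B\v x}},
\]
so it suffices to show
\[
\frac{2\sqrt{\tau}}{1+\tau}\le\frac{\inpd{\v x}{B\v x}}{\norm{\v x}\,\norm{B\v x}}\le1 .
\]
Note that $2/(\sqrt\tau+1/\sqrt\tau)=2\sqrt\tau/(1+\tau)$. The upper bound is just Cauchy--Schwarz, and it gives the $-1$ in the statement. The lower bound is positive, so it forces $\cos\theta<0$ for $A$ and places $\theta$ in $[\pi/2,\pi]$. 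The positive definite statement is the same inequality without the sign flip.

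\textbf{Key step.} Write $\v x=\sum_j a_j\v q_j$ in an orthonormal eigenbasis of $B$ with eigenvalues $m=\lambda_{\min}\le\lambda_j\le M=\lambda_{\max}$. Normalize $\norm{\v x}=1$ and set $w_j=a_j^2$, so $\sum_j w_j=1$. The quantity to bound becomes
\[
\frac{\sum_j w_j\lambda_j}{\sqrt{\sum_j w_j\lambda_j^2}} .
\]
For each $j$ we have $(\lambda_j-m)(M-\lambda_j)\ge0$, that is, $\lambda_j^2\le(m+M)\lambda_j-mM$. Averaging with the weights $w_j$ and writing $\mu=\sum_j w_j\lambda_j$ gives
\[
\sum_j w_j\lambda_j^2\le(m+M)\mu-mM.
\]
It then remains to show
\[
\frac{\mu^2}{(m+M)\mu-mM}\ge\frac{4mM}{(m+M)^2}.
\]
Clearing denominators, this is equivalent to
\[
\big((m+M)\mu-2mM\big)^2\ge0,
\]
which is true. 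An alternative route is AM--GM: $2\sqrt{mM}\sqrt{\textstyle\sum_j w_j\lambda_j^2}\le \sum_j w_j\big(mM+\lambda_j^2\big)\cdot\dots$. Either way, taking square roots gives $\mu/\sqrt{\sum_j w_j\lambda_j^2}\ge 2\sqrt{mM}/(m+M)=2\sqrt\tau/(1+\tau)$.

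\textbf{Main obstacle.} The only delicate step is this Kantorovich inequality. The rest is bookkeeping: tracking the sign for negative versus positive definite $A$, and the identity $2/(\sqrt\tau+1/\sqrt\tau)=2\sqrt{\tau}/(1+\tau)$. Sharpness of the bound, attained for $\v x$ with equal weight on the extreme eigenvectors in the ratio $w_{\min}:w_{\max}=M:m$, is optional and need not be proved.
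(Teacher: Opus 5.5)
Your proof is correct, but it takes a different route from the paper's.

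\textbf{The paper's route.} It diagonalizes $A$ and compresses it to the $2\times2$ matrix $N=B^T\Lambda B$ on $\operatorname{span}\{\v x,\Lambda\v x\}$. It uses that the eigenvalues $\mu_1,\mu_2$ of $N$ lie between the extreme eigenvalues of $A$, and parametrizes the unit vector in that plane by $y$. It then locates by calculus the unique interior critical point $z^*=\mu_2/(\mu_1+\mu_2)$ of the resulting function of $y^2$. Finally it passes from $\mu_1/\mu_2$ to $\tau$ using that $t+1/t$ is increasing on $[1,\infty)$.

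\textbf{Your route.} You keep all eigenvalues at once and prove the Kantorovich-type bound by averaging the pointwise inequality $(\lambda_j-m)(M-\lambda_j)\ge0$ with weights $a_j^2$, then completing a square. Cross-multiplying is legitimate because $(m+M)\mu-mM\ge\sum_j w_j\lambda_j^2>0$.

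\textbf{What each buys.} Your argument is shorter and purely algebraic. It needs no optimization and no interlacing step. It also needs no separate care for the degenerate cases the paper passes over, namely $\v x$ an eigenvector (where $\operatorname{span}\{\v x,\Lambda\v x\}$ is one-dimensional) or $\mu_1=\mu_2$. The paper's compression makes the two-dimensional nature of the extremal configuration visible, which feeds its remark on equality cases. Your argument gives the same characterization just as easily: equality forces the weights onto the extreme eigenvalues with $\mu=2mM/(m+M)$. That means $w_{\min}:w_{\max}=M:m$, so $\v x\parallel\v q_{\max}+\sqrt\tau\,\v q_{\min}$, as in the paper.

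\textbf{Clean-ups.} The ``alternative AM--GM route'' is left unfinished. Either drop it, or complete it with $S=\sum_j w_j\lambda_j^2$ as $2\sqrt{mM}\sqrt{S}\le mM+S=\sum_j w_j(mM+\lambda_j^2)\le(m+M)\mu$. Also, ``equal weight on the extreme eigenvectors in the ratio $M:m$'' contradicts itself; say ``weights in the ratio $w_{\min}:w_{\max}=M:m$'' instead.
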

\begin{proof}
The core idea is to optimize over the subspace spanned by $\v x$ and $A\v x$.
Since $A$ is symmetric, it admits the diagonalization $A=P^{T}\Lambda P$. Because the orthogonal matrix $P$ is surjective, the maximal rotation angle induced by $A$ is the same as that induced by $\Lambda$. Thus it suffices to analyze the diagonal case $\Lambda$. Since only the angle matters, we may assume $\norm{\v x}=1$, so
\begin{equation}\label{eqn:curv:cos_theta}
    \cos(\theta)=\frac{\v x^T\Lambda\v x}{\norm{\Lambda\v x}}.
\end{equation}
This expression depends only on the vectors $\v x$ and $\Lambda\v x$, which span a $2$-dimensional subspace of~$\bbR^d$. Let $B\in\bbR^{d\times 2}$ be the matrix whose columns $\v b_1,\v b_2$ form an orthonormal basis of $\operatorname{span}\{\v x,\,\Lambda\v x\}$. Then there exist $\v v_1,\v v_2\in\bbR^2$ such that $\v x=B\v v_1$ and $\Lambda\v x=B\v v_2$, and using $B^{T}B=I_2$, \eqref{eqn:curv:cos_theta} becomes $\v v_1^{T}\v v_2/\norm{\v v_2}$, the cosine of the angle between $\v v_1$ and $\v v_2$. Observe that $\v v_2=B^{T}\Lambda B \v v_1$. Set $N \defeq B^{T}\Lambda B\in\bbR^{2\times 2}$, which is symmetric, and let $N=Q^{T}\diag(\mu_1,\mu_2)Q$ be its eigendecomposition. Let $\lambda_{\min},\,\lambda_{\max}$ be the eigenvalues of $A$ of smallest and largest modulus. For any unit vector $\v w\in\bbR^2$, $\v w^TN\v w = (B\v w)^T \Lambda (B\v w)\in [\lambda_{\max},\,\lambda_{\min}]$, all negative, so $\mu_1, \mu_2\in[\lambda_{\max},\,\lambda_{\min}]$. With $\v u_1=Q\v v_1$ and $\v u_2=Q\v v_2$ we have $\v u_2 = \diag(\mu_1,\mu_2)\v u_1$. Because $\norm{\v u_1}=1$, we may parametrize $\v u_1 = (y,\,\sqrt{1-y^2})$ for some $y\in[-1,1]$. Then $\v u_2=(\mu_1y,\,\mu_2\sqrt{1-y^2})$, and therefore
\[
  \frac{\v v_1^{T}\v v_2}{\norm{\v v_2}} = \frac{\v u_1^{T}\v u_2}{\norm{\v u_2}}
  = \frac{(\mu_1-\mu_2)y^2 + \mu_2}{\sqrt{(\mu_1^2-\mu_2^2)y^2 + \mu_2^2}} \eqqcolon f(y^2).
\]
A direct derivative computation shows that the unique critical point of $f$ on $[0,1]$ is $z^* = \mu_2/(\mu_1+\mu_2)$, with
\[
  f(z^*) = \frac{2\mu_1\mu_2}{(\mu_1+\mu_2)\sqrt{\mu_1\mu_2}} = \frac{-2}{\sqrt{\mu_1/\mu_2} + \sqrt{\mu_2/\mu_1}},
  \qquad f(0)=f(1)=-1.
\]
By the AM--GM inequality, $f(z^*)\ge-1$, so $f(z^*)$ is the upper bound and $-1$ the lower bound. Without loss of generality, assume $0 >\mu_2\ge \mu_1$, and let $\tau = \lambda_{\max}/\lambda_{\min}$ be the condition number of $A$; then $1 \le \mu_1/\mu_2 \le \tau$. The function $g(t)=t+1/t$ is increasing on $[1,\infty)$, hence $f(z^*) \le -2/(\sqrt{\tau} + 1/\sqrt{\tau})$, which establishes the lemma.
\end{proof}
\begin{remark}\label{rmk:curv:rotation_angle}
We characterize the equality cases in~\cref{thm:curv:rotation_angle}.
\begin{itemize}
  \item The lower bound $-1$ is attained exactly when $\v x$ is an eigenvector of $A$. In such case the rotation angle is~$\pi$, meaning that $A\v x$ is a negative multiple of $\v x$ (a direction flip). In the special case where the condition number $\tau=1$, i.e., $\lambda_{\max}=\lambda_{\min}$, this holds for every $\v x$.
  \item The upper bound is attained when $\mu_1=\lambda_{\max}$ and $\mu_2=\lambda_{\min}$, and the low-dimensional coordinates of $\v x$ equal $\big( \pm \sqrt{\mu_2/(\mu_1+\mu_2)}, \;\pm \sqrt{\mu_1/(\mu_1+\mu_2)}\big)$ in the eigenbasis of $A$. The vector~$\v x$ achieving maximal rotation therefore lies in the span of $\v q_{\max}$ and $\v q_{\min}$, the eigenvectors corresponding to $\lambda_{\max}$ and $\lambda_{\min}$, and is aligned with $\v q_{\max} + \sqrt{\tau}\,\v q_{\min}$.
  \item An analogous argument yields $2/(\sqrt{\tau}+1/\sqrt{\tau}) \le \cos(\theta) \le 1$ for positive definite matrices, implying $\theta\in[0,\, \pi/2]$.
\end{itemize}
\end{remark}

When $\v x_0$ moves far away from $\cM$, it leaves the reach, and \cref{asm:curv:standing} no longer holds. As in \cref{thm:curv:local_geodesic}, we then work with the osculating paraboloid $z(\v s) = \v s^TA\v s$ of $\cM$ at $\v y_0$, and $\v y_0$ has to remain the closest point of $\v x_0$ on it. This needs the following assumption, through which any $\zeta>0$ is allowed.
\begin{assumption}\label{asm:curv:convex_side}
In the frame of \cref{sec:curv:hypersurface}, where $\v y_0 = \v 0$, $\v x_0 = (\v 0_{d-1},\zeta)$ and $\cM$ is locally the graph of $z(\v s) = \v s^TA\v s$, the matrix $A$ is negative definite. Equivalently, all principal curvatures of $\cM$ at $\v y_0$, taken with respect to the unit normal pointing toward $\v x_0$, are negative.
\end{assumption}
\begin{remark}\label{rmk:curv:indefinite_A}
The relevant matrix is $\cA\defeq(I - 2\zeta A)^{-1}A$, which governs the relation between $\v s'(0)$ and $\v s''(0)$; its eigenvalues are $\lambda_i/(1-2\zeta\lambda_i)$. Under \cref{asm:curv:convex_side}, these are negative for every $\zeta>0$, so $I-2\zeta A$ is invertible, and the closest point projection onto the osculating paraboloid is defined along the whole normal ray. Moreover, $\cA$ approaches $-(2\zeta)^{-1}I$ as $\zeta\to\infty$. If $A$ has a positive eigenvalue $\lambda$, then $I-2\zeta A$ is singular at the focal distance $\zeta = 1/(2\lambda)$, which yields an upper bound for the reach, and for larger $\zeta$ the point $\v y_0$ is no longer a closest point of $\v x_0$, and the limiting behavior does not hold. For saddle points and mixed curvature, the analysis therefore applies at finite $\zeta<\iota$ (\cref{thm:curv:parametrization,thm:curv:local_geodesic}), but not in the limit.
\end{remark}

Now we are ready to understand the behavior of the induced curve $\alpha$ for a non-tangent generator $\ell$.
\begin{theorem}[Convergence to geodesics]\label{thm:curv:r_convergence}
Let $\v x_0\in\bbR^d$ and $\v y_0=\cP\v x_0\in\cM$ be its closest point on $\cM\subset\bbR^d$, under \cref{asm:curv:convex_side}, and let $\cM$ be represented  by its osculating paraboloid \eqref{eqn:curv:m_graph} near $\v y_0$. If the ray $\ell\subset\bbR^d$ starting at $\v x_0$ has slope $\gamma\ne0$ relative to $\cT_{\v y_0}\cM$, as given in~\eqref{eqn:curv:l_h}, then its closest-point--induced curve $\alpha\subset\cM$ converges to a local geodesic at $\v y_0$ as $\zeta=\norm{\v x_0-\v y_0}\to\infty$, in the sense that the curvature $\kappa$ of $\alpha$ at $\v y_0$ converges to the magnitude of its normal curvature $\kappa_n$ according to
\begin{equation}\label{eqn:curv:geodesic_rate}
\abs{\kappa-\abs{\kappa_n}} = \cO(\gamma^2\zeta^{-2}),
\end{equation}
where $\kappa_n$ coincides with the curvature of the geodesic of $\cM$ at $\v y_0$ in the same direction as $\alpha$.

Furthermore, consider a coordinate frame in which $\v y_0$ is the origin and the osculating paraboloid of $\cM$ is given as $z(\v s)=\v s^T\Lambda\v s$ with $\Lambda=\diag(\lambda_1,\lambda_2,\dots,\lambda_{d-1})$. If the tangential part of the direction of $\ell$ is given by
$\v v=(v_1,\, v_2,\, \dots,\, v_{d-1})\in\cT_{\v y_0}\cM$,
then as $\zeta\to\infty$, the orientation of $\alpha$ at $\v y_0$ converges to
\begin{equation}\label{eqn:curv:limit_direction}
\Lambda^{-1}\v v=\bigg(\frac{v_1}{\lambda_1},\,\frac{v_2}{\lambda_2},\,\dots,\,\frac{v_{d-1}}{\lambda_{d-1}}\bigg)\in\cT_{\v y_0}\cM,
\end{equation}
which is also the direction of the limiting geodesic, with the corresponding curvature being
\begin{equation}\label{eqn:curv:limit_curvature}
\lim_{\zeta\to\infty}\kappa_n = 2\sum_{i=1}^{d-1}\frac{v_i^2}{\lambda_i}\Big/\sum_{i=1}^{d-1}\frac{v_i^2}{\lambda_i^2}.
\end{equation}
\end{theorem}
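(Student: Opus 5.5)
The plan is to compute $\alpha'(0)$ and $\alpha''(0)$ explicitly from \cref{thm:curv:parametrization} and split $\alpha''(0)$ into its normal part and the part of its tangential component orthogonal to $\alpha'(0)$. Then $\kappa^2=\kappa_n^2+\kappa_g^2$ by \eqref{eqn:curv:total_curvature}, and the whole statement reduces to showing that $\kappa_g\to0$ at rate $\cO(\gamma\zeta^{-1})$ while $\abs{\kappa_n}$ stays bounded away from zero.

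\textbf{Derivatives.} Write $u\defeq\v s'(0)=(I-2\zeta A)^{-1}\v e_1$ and $\cA=(I-2\zeta A)^{-1}A$ as in \cref{rmk:curv:indefinite_A}. By \eqref{eqn:curv:s_derivatives}, $\v s''(0)=4\gamma\,\cA u$. Since $z=\v s^TA\v s$ and $\v s(0)=\v 0$, we get $z'(0)=0$ and $z''(0)=2u^TAu$. For a curve in arbitrary parametrization, the normal curvature is therefore
\[
\kappa_n=\frac{2u^TAu}{\norm{u}^2},
\]
which is Meusnier's formula \eqref{eqn:curv:meusnier} normalized to unit speed. The geodesic curvature is
\[
\kappa_g=\frac{\norm{\Proj_{u^\perp}\v s''(0)}}{\norm{u}^2}.
\]
By \cref{asm:curv:convex_side}, $A$ is negative definite for every $\zeta>0$. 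Hence $\abs{\kappa_n}\ge 2\min_i\abs{\lambda_i}>0$ uniformly in $\zeta$. We then use
\[
\kappa-\abs{\kappa_n}=\frac{\kappa_g^2}{\kappa+\abs{\kappa_n}}\le\frac{\kappa_g^2}{2\abs{\kappa_n}},
\]
so it suffices to show $\kappa_g=\cO(\gamma\zeta^{-1})$.

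\textbf{Large-$\zeta$ expansion (the main step).} Since $A$ is invertible, we can write
\[
(I-2\zeta A)^{-1}=-(2\zeta)^{-1}A^{-1}\big(I-(2\zeta)^{-1}A^{-1}\big)^{-1},
\]
and the Neumann series converges for $\zeta>\norm{A^{-1}}$. This gives
\[
u=-(2\zeta)^{-1}A^{-1}\v e_1+\cO(\zeta^{-2}),\qquad \norm{u}\asymp\zeta^{-1},
\]
\[
\cA=-(2\zeta)^{-1}I-(4\zeta^2)^{-1}A^{-1}+\cO(\zeta^{-3}).
\]
The leading term of $\cA$ is a multiple of the identity, so $\cA u$ is parallel to $u$ up to this term. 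Only the correction survives the projection:
\[
\norm{\Proj_{u^\perp}\cA u}\le C\zeta^{-2}\norm{u}=\cO(\zeta^{-3}).
\]
Multiplying by $4\gamma$ and dividing by $\norm{u}^2\asymp\zeta^{-2}$ yields $\kappa_g=\cO(\gamma\zeta^{-1})$, and hence \eqref{eqn:curv:geodesic_rate}. The delicate point is making the constants in the expansion uniform, which the Neumann series does. As a cross-check, one could instead argue via \cref{thm:curv:rotation_angle} applied to $\cA$: its eigenvalues $\lambda_i/(1-2\zeta\lambda_i)$ have condition number tending to $1$, so the rotation angle of $\cA$ tends to $\pi$.

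\textbf{Limiting direction and curvature.} In the eigenframe $A=\Lambda$ with tangential direction $\v v$, the expansion gives $u/\norm{u}\to-\Lambda^{-1}\v v/\norm{\Lambda^{-1}\v v}$. As an unoriented direction this is \eqref{eqn:curv:limit_direction}. Substituting into $\kappa_n=2u^T\Lambda u/\norm{u}^2$ and letting $\zeta\to\infty$ gives
\[
\lim_{\zeta\to\infty}\kappa_n=\frac{2\sum_i(v_i/\lambda_i)^2\lambda_i}{\sum_i v_i^2/\lambda_i^2}=2\sum_i\frac{v_i^2}{\lambda_i}\Big/\sum_i\frac{v_i^2}{\lambda_i^2},
\]
which is \eqref{eqn:curv:limit_curvature}. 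Finally, the geodesic of $\cM$ through $\v y_0$ with velocity $u$ has zero tangential acceleration. By \eqref{eqn:curv:meusnier} its curvature is therefore exactly $\abs{\sff(u,u)}/\norm{u}^2=\abs{\kappa_n}$, which justifies identifying $\kappa_n$ with the curvature of the geodesic in the same direction as $\alpha$.
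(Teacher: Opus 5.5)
Your proof is correct. It has the same skeleton as the paper's ($\kappa^2=\kappa_n^2+\kappa_g^2$, $\kappa_g=\cO(\gamma\zeta^{-1})$, $\abs{\kappa_n}$ bounded below), but two steps take a different route.

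\textbf{Identifying the normal part.} The paper introduces the tangent ray $\check\ell$ with the same offset and tangential part, and uses \cref{thm:curv:local_geodesic} to see that its induced curve is geodesic at $\v y_0$. It then extracts $\mathrm{Res}=\sqrt{\kappa^2-\check\kappa^2}=\norm{\v s''(0)}\sin\theta/\norm{\v s'(0)}^2$ from the Frenet identity. This $\mathrm{Res}$ is exactly your $\kappa_g$, which you get in one line from Meusnier's theorem, without any auxiliary curve.

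\textbf{Bounding the angle.} The substantive difference is how you control the angle between $\v s'(0)$ and $\v s''(0)=4\gamma\cA\v s'(0)$.
The paper applies the Kantorovich-type \cref{thm:curv:rotation_angle} to $\cA$, which gives $\sin\theta\le(\tau-1)/(\tau+1)$ with $\tau=\tau(\zeta)$ the condition number of $\cA$. That bound holds at every $\zeta$, and its constant $\tau_A(\tau_A-1)$ vanishes at umbilic points, consistent with \cref{thm:curv:local_geodesic}.
You instead expand $\cA=-(2\zeta)^{-1}I+\cO(\zeta^{-2})$ and let the projection onto $u^\perp$ remove the leading term.

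\textbf{What your route buys.} It is more elementary. It gives an explicit lower bound $\abs{\kappa_n}\ge2\min_i\abs{\lambda_i}$, where the paper only appeals to nondegeneracy. It produces the limiting direction from the same expansion. It is also the technique the paper itself uses later in \cref{thm:curv:third_order}.

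\textbf{Its weaknesses, and how to remove them.} You need the restriction $\zeta>\norm{A^{-1}}$, and your constant $C\sim\norm{A^{-1}}$ does not detect the umbilic case. Both go away with the exact identity $\cA=(2\zeta)^{-1}\big((I-2\zeta A)^{-1}-I\big)$. It gives $\Proj_{u^\perp}\cA u=(2\zeta)^{-1}\Proj_{u^\perp}\big((I-2\zeta A)^{-1}-cI\big)u$ for any scalar $c$. Taking $c$ at the midpoint of the spectrum of $(I-2\zeta A)^{-1}$ then yields a bound proportional to $\abs{\lambda_{\max}}-\abs{\lambda_{\min}}$, valid for all $\zeta>0$, with no Neumann series.

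\textbf{On the orientation.} Your $-\Lambda^{-1}\v v$ is the accurate limit, since every factor $1-2\zeta\lambda_i$ is positive. The paper's $\Lambda^{-1}\v v$ is correct only as an unoriented direction.
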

\begin{proof}
Fix the non-tangent ray $\ell$ and adopt the coordinate frame of~\cref{thm:curv:parametrization}, in which the tangential part of $\ell$ is along $\v e_1$. For finite $\zeta$, recall from~\eqref{eqn:curv:total_curvature} that the curvature $\kappa$ of $\alpha$ contains both the normal curvature and the geodesic curvature, and therefore differs from the curvature $\check\kappa$ of the true geodesic in that direction.
To obtain a closed form expression for $\check\kappa$, observe from~\cref{thm:curv:local_geodesic} that the local geodesic in the same initial direction can be identified explicitly by the induced curve of a tangent ray $\check\ell$ with the same offset $\zeta$ and the same tangent-plane projection as $\ell$. We use a check accent for all quantities associated with this tangent configuration.

By~\cref{thm:curv:parametrization}, the velocity $\v s'(0) = (I-2\zeta A)^{-1}\v e_1$ is independent of $\gamma$, so $\v s'(0)=\v{\check s}'(0)$, and $\check\alpha$ is a geodesic at $\v y_0$ in the direction $\v s'(0)$. Thus $\check\kappa(0) = \kappa_n$ is the normal curvature of $\cM$ at $\v y_0$ in this direction. For the tangent ray $\v{\check s}''(0)=\v 0$, which implies
\begin{align}
&\check v'(0)\t(0)=\Proj_{(\v{\check s}'(0),0)}\big(\v{\check s}''(0),\,\check z''(0)\big)=\v 0 \implies \check v'(0)=0, \nonumber\\
&\implies \check\alpha''(0) = \big(\v 0,\, \check z''(0)\big) = \check\kappa(0)\check v(0)^2\,\v n(\v y_0)
\implies\check\kappa(0) = \frac{\check z''(0)}{\v{\check s}'(0)\cdot\v{\check s}'(0)}.
\label{eqn:curv:kappaz}
\end{align}
Differentiating the graph representation yields
\begin{equation}\label{eqn:curv:zpp}
\check z''(h) = 2\big(A\v{\check s}'(h)\big)\cdot\v{\check s}'(h) +2\big(A\v{\check s}(h)\big)\cdot\v{\check s}''(h)
\implies \check z''(0) = 2\v{\check s}'(0)^TA\v{\check s}'(0).
\end{equation}
Substituting into~\eqref{eqn:curv:kappaz} gives
\[ \check\kappa(0) = \frac{2\v{\check s}'(0)^TA\v{\check s}'(0)}
{\v{\check s}'(0)\cdot\v{\check s}'(0)} = \frac{2\v e_1^T(I-2\zeta A)^{-1}A(I-2\zeta A)^{-1}\v e_1}
{\v e_1^T(I-2\zeta A)^{-2}\v e_1}. \]
Turning to the curve $\alpha(h)$, the Frenet identity~\eqref{eqn:curv:frenet} yields
$\norm{\v s''(0)}^2+z''(0)^2=v'(0)^2+\kappa(0)^2v(0)^4$,
since $\t$ and $\n$ are orthogonal. The tangential contribution is
\[
v'(0)^2 = \frac{\big(\alpha'(0)\cdot \alpha''(0)\big)^2}{\norm{ \alpha'(0)}^2} = \frac{\big(\v s'(0)\cdot\v s''(0)\big)^2}{\norm{\v s'(0)}^2}.
\]
Moreover, by~\eqref{eqn:curv:zpp}, $z''(0)$ only depends on $\v s'(0)$, and $\v s'(0) = \v{\check s}'(0)$, so $z''(0)^2=\check z''(0)^2=\check\kappa(0)^2v(0)^4$.
Combining terms and rearranging yields
\begin{align*}
\kappa(0)^2
&=
\frac{\norm{\v s''(0)}^2}{v(0)^4}+\check\kappa(0)^2 -\frac{(\v s'(0)\cdot\v s''(0))^2}{v(0)^6},
\\
\mathrm{Res}&\defeq\sqrt{\kappa(0)^2-\check\kappa(0)^2} = \frac{\sqrt{\norm{\v s''(0)}^2\norm{\v s'(0)}^2-(\v s'(0)\cdot\v s''(0))^2}}{v(0)^3}
=\frac{\norm{\v s''(0)}\sin\theta}{\norm{\v s'(0)}^2},
\end{align*}
where $\theta$ is the angle between $\v s'(0)$ and $\v s''(0)$.
To bound $\mathrm{Res}$, we bound the norms of $\v s''(0)$ and $\v s'(0)$. Let $\{\lambda_i\}_{i=1}^{d-1}$ denote the eigenvalues of $A$. Then $(I-2\zeta A)^{-1}$ has eigenvalues $(1-2\zeta\lambda_i)^{-1}$, and therefore
\[
\norm{\v s'(0)}^2 = \v e_1^T(I-2\zeta A)^{-2}\v e_1\ge \min_i\bigg(\frac{1}{1-2\zeta\lambda_i}\bigg)^2.
\]
On the other hand, $\v s''(0)$ is governed by the symmetric matrix $(I-2\zeta A)^{-1}A(I-2\zeta A)^{-1}$, whose factors share the same eigenvectors, so its eigenvalues are $\lambda_i/(1-2\zeta\lambda_i)^2$. Consequently,
\[
\norm{\v s''(0)}\le\max_i\absbig{\frac{4\gamma\lambda_i}{(1-2\zeta\lambda_i)^2}},
\qquad
\mathrm{Res}\le\max_i\absbig{\frac{4\gamma\lambda_i}{(1-2\zeta\lambda_i)^2}}\max_i\big(1-2\zeta\lambda_i\big)^2\sin\theta.
\]
It remains to bound $\sin\theta$. By~\cref{thm:curv:parametrization}, $\v s''(0)=4\gamma\cA\,\v s'(0)$ with $\cA=(I-2\zeta A)^{-1}A$, so $\sin\theta$ is controlled by the rotation exerted on $\v s'(0)$ by $\cA$. Under \cref{asm:curv:convex_side}, $A$ is negative definite, so the eigenvalues $\lambda_i/(1-2\zeta\lambda_i)$ of $\cA$ are negative for every $\zeta>0$, and $\cA$ is negative definite.
Invoking~\cref{thm:curv:rotation_angle}, the rotation angle $\theta'$ of $\cA$ satisfies $\cos\theta'\le -2/(\sqrt{\tau}+1/\sqrt{\tau})$, where $\tau = \tau(\zeta)$ is the condition number of $\cA$. Consequently,
\[
0\le\sin\theta'\le\frac{\sqrt{\tau}-\frac{1}{\sqrt{\tau}}}{\sqrt{\tau}+\frac{1}{\sqrt{\tau}}}=\frac{\tau-1}{\tau+1}
=\frac{\abs{\lambda_{\max}}-\abs{\lambda_{\min}}}{\abs{\lambda_{\max}}+\abs{\lambda_{\min}}+4\zeta\abs{\lambda_{\min}\lambda_{\max}}}
=\cO(\zeta^{-1}),
\]
with $\lambda_{\min},\lambda_{\max}$ the eigenvalues of $A$ of smallest and largest modulus. Multiplication by $4\gamma$ only changes the sign, so $\sin\theta=\sin\theta' = \cO(\zeta^{-1})$. Substituting, and using $1-2\zeta\lambda_i\approx2\zeta\abs{\lambda_i}$ for large $\zeta$,
\begin{equation}\label{eqn:curv:res_asymbound}
\mathrm{Res}\lesssim\frac{\abs{\gamma}\,\abs{\lambda_{\max}}\big(\abs{\lambda_{\max}}-\abs{\lambda_{\min}}\big)}
{\lambda_{\min}^2}\frac{1}{\zeta}= \abs{\gamma}\tau_A(\tau_A-1)\frac{1}{\zeta}=\cO(\zeta^{-1}),
\end{equation}
where $\tau_A = \lambda_{\max}/\lambda_{\min}$ is the condition number of $A$. Since $\kappa(0)^2-\check\kappa(0)^2 = \mathrm{Res}^2$ and $\kappa(0)\ge\abs{\check\kappa(0)}$, we conclude that $0\le\kappa(0)-\abs{\check\kappa(0)}\le\mathrm{Res}^2/(2\abs{\check\kappa(0)}) = \cO(\gamma^2\zeta^{-2})$, where $\abs{\check\kappa(0)}$ is bounded below because $\cM$ is nondegenerate at $\v y_0$.

Finally, the tangent direction of the induced curve is $\v s'(0)/\norm{\v s'(0)}$ with $\v s'(0)=(I-2\zeta A)^{-1}\v e_1$. Write $A=Q^T\Lambda Q$ with $\Lambda=\diag(\lambda_1,\dots,\lambda_{d-1})$; then $Q$ maps the frame of~\cref{thm:curv:parametrization} to the principal frame, and $Q\v e_1 = \v v$ is the tangential part of $\ell$ there. In the principal frame, the direction of $\alpha$ is
\[
Q\v s'(0) = (I-2\zeta\Lambda)^{-1}\v v = \diag\bigg(\frac{1}{1-2\zeta\lambda_1},\dots,\frac{1}{1-2\zeta\lambda_{d-1}}\bigg)\v v.
\]
For $\zeta$ sufficiently large so that $\abs{2\zeta\lambda_i}\gg 1$ for all $i$, $1-2\zeta\lambda_i\approx-2\zeta\lambda_i$, so the direction is effectively along $\Lambda^{-1}\v v$, independently of $\zeta$, and this approximation is exact as $\zeta\to\infty$. This direction coincides with that of the limiting geodesic. Correspondingly,
\[
\lim_{\zeta\to\infty}\check\kappa(0) = \frac{2\v e_1^TA^{-1}\v e_1}{\v e_1^TA^{-2}\v e_1}
= 2\frac{\v v^T\Lambda^{-1}\v v}{\v v^T\Lambda^{-2}\v v}
=2\sum_{i=1}^{d-1}\frac{v_i^2}{\lambda_i}\Big/\sum_{i=1}^{d-1}\frac{v_i^2}{\lambda_i^2}.
\]
\end{proof}
\begin{remark}
\Cref{thm:curv:r_convergence} implies that for various rays $\ell$ with different tangent-space projection directions, the corresponding limiting geodesics also differ in directions, provided the surface is non-degenerate.
\end{remark}
\begin{remark}
The argument of \cref{thm:curv:r_convergence} applies at every point $\v x\in\ell$ with the osculating paraboloid at $\cP\v x$ as long as \cref{asm:curv:convex_side} holds at $\cP\v x$. So, as $\zeta\to\infty$, $\alpha$ is asymptotically a geodesic along its whole length, to second order at each of its points.
\end{remark}

\Cref{thm:curv:local_geodesic,thm:curv:r_convergence} are statements about the osculating paraboloid. The curvature of the induced curve involves the second derivative of $\cP$, so it also depends on the third-order terms of $\cM$ at $\v y_0$.
\begin{corollary}[Third-order terms]\label{thm:curv:third_order}
Let $\cM$ be given near $\v y_0$ by $z(\v s) = \v s^TA\v s + C(\v s,\v s,\v s) + \cO(\norm{\v s}^4)$ with a symmetric trilinear form $C$, and let $G = (I-2\zeta A)^{-1}$. Then the induced curve satisfies
\begin{equation}\label{eqn:curv:s_derivatives_third}
\v s'(0) = G\v e_1,\qquad \v s''(0) = G\big(4\gamma A\v s'(0) + 6\zeta\, C(\v s'(0),\v s'(0),\cdot)\big).
\end{equation}
Under \cref{asm:curv:convex_side}, for every slope $\gamma$, its geodesic curvature at $\v y_0$ converges as $\zeta\to\infty$ to
\begin{equation}\label{eqn:curv:kappa_g_limit}
\kappa_g^\infty = \frac{3\,\norm{\Proj_{\v u^\perp}A^{-1}C(\v u,\v u,\cdot)}}{\norm{\v u}^2},\qquad \v u = A^{-1}\v e_1.
\end{equation}
\end{corollary}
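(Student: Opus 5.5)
The plan is to repeat the implicit-differentiation argument of \cref{thm:curv:parametrization}, this time keeping the cubic term, and then to read off the geodesic curvature from the tangential part of $\alpha''(0)$. Write $f(\v s)=\v s^TA\v s+C(\v s,\v s,\v s)+\cO(\norm{\v s}^4)$. Then $\nabla F=(-2A\v s-3C(\v s,\v s,\cdot)-\cO(\norm{\v s}^3),\,1)$ is a normal field. By \cref{thm:curv:cp_normal}, $\ell(h)=\alpha(h)+c(h)\nabla F$, which gives the system
\[
\v s(h)-c(h)\big(2A\v s(h)+3C(\v s(h),\v s(h),\cdot)\big)=h\v e_1+\cO(\norm{\v s}^3),\qquad c(h)=\zeta+\gamma h-f(\v s(h)).
\]
I take $\cM$ to be $C^3$ near $\v y_0$, so that $\cP$ is $C^2$ and $\v s$, $c$ are twice differentiable; I treat this as a standing regularity assumption. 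Because $\v s(0)=\v 0$, $\v s'(0)$ is finite and $f$ vanishes to second order, we get $c(0)=\zeta$, $c'(0)=\gamma$, and $c''(0)=\gamma$-independent finite.

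First I differentiate once at $h=0$. The $C$ term is quadratic in $\v s$, so it drops out, and $(I-2\zeta A)\v s'(0)=\v e_1$, i.e.\ $\v s'(0)=G\v e_1$.

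Next I differentiate twice and expand by the product rule:
\[
\tfrac{d^2}{dh^2}\big[c\,(2A\v s+3C(\v s,\v s,\cdot))\big]\Big|_0=0+2\gamma\cdot 2A\v s'(0)+\zeta\big(2A\v s''(0)+6C(\v s'(0),\v s'(0),\cdot)\big).
\]
The $c''(0)$ term is multiplied by $2A\v s(0)+3C(\v s(0),\v s(0),\cdot)=\v 0$, so it vanishes. The quartic remainder contributes nothing at second order, and neither do the mixed terms $C(\v s,\v s',\cdot)$, since they contain $\v s(0)=\v 0$. Solving gives $(I-2\zeta A)\v s''(0)=4\gamma A\v s'(0)+6\zeta C(\v s'(0),\v s'(0),\cdot)$, which is \eqref{eqn:curv:s_derivatives_third}.

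For the limit, I work in the Darboux frame, where $\cT_{\v y_0}\cM=\bbR^{d-1}\times\{0\}$ and $\alpha'(0)=(\v s'(0),0)$. As in the proof of \cref{thm:curv:r_convergence}, the geodesic curvature at $\v y_0$ is
\[
\kappa_g=\norm{\Proj_{\v s'(0)^\perp}\v s''(0)}/\norm{\v s'(0)}^2.
\]
Under \cref{asm:curv:convex_side}, $A$ is invertible and negative definite, so $G=-(2\zeta)^{-1}A^{-1}+\cO(\zeta^{-2})$. Hence $\v s'(0)=-(2\zeta)^{-1}\v u+\cO(\zeta^{-2})$, and $\norm{\v s'(0)}^2=\norm{\v u}^2/(4\zeta^2)+\cO(\zeta^{-3})$. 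I split $\v s''(0)$ into two terms.

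\emph{The $\gamma$ term.} It equals $4\gamma\cA\v s'(0)$ with $\cA=GA$, and its norm is $\cO(\zeta^{-2})$. By the rotation bound of \cref{thm:curv:rotation_angle}, used exactly as in \cref{thm:curv:r_convergence}, its sine with $\v s'(0)$ is $\cO(\zeta^{-1})$. So its perpendicular part is $\cO(\zeta^{-3})$, which is negligible after dividing by $\norm{\v s'(0)}^2\sim\zeta^{-2}$.

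\emph{The cubic term.} We have
\[
6\zeta\,G\,C(\v s'(0),\v s'(0),\cdot)=6\zeta\cdot\big(-(2\zeta)^{-1}A^{-1}\big)\cdot\tfrac{1}{4\zeta^2}C(\v u,\v u,\cdot)+\cO(\zeta^{-3})=-\tfrac{3}{4\zeta^2}A^{-1}C(\v u,\v u,\cdot)+\cO(\zeta^{-3}).
\]

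The projector $\Proj_{\v s'(0)^\perp}$ converges to $\Proj_{\v u^\perp}$, because the direction of $\v s'(0)$ converges to that of $\v u$. The factors $\zeta^{-2}$ cancel in the ratio, giving \eqref{eqn:curv:kappa_g_limit}.

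The main obstacle I expect is making the limit rigorous rather than formal. The error terms in $G$ and in the projector direction must be controlled uniformly, and the $\gamma$ term's perpendicular part must be shown to be of strictly lower order. I would first try the explicit eigen-expansion $G=\sum_i(1-2\zeta\lambda_i)^{-1}\v q_i\v q_i^T$, which makes all the $\cO(\zeta^{-k})$ remainders concrete. I would also note that the $\gamma$ contribution is already handled by the $\sin\theta=\cO(\zeta^{-1})$ bound from \cref{thm:curv:r_convergence}, which is why the limit holds for every slope $\gamma$.
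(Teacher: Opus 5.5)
Your proposal is correct and follows the paper's proof essentially step for step. It uses the same implicit differentiation of $\v s-c\,\nabla z(\v s)=h\v e_1$ with $c(0)=\zeta$, $c'(0)=\gamma$, the same formula $\kappa_g=\norm{\Proj_{\v s'(0)^\perp}\v s''(0)}/\norm{\v s'(0)}^2$, and the same expansion $G=-(2\zeta)^{-1}A^{-1}+\cO(\zeta^{-2})$ leading to the $-\frac{3}{4\zeta^2}A^{-1}C(\v u,\v u,\cdot)$ term. The only minor difference is in the $\gamma$ term: you dispose of it with the rotation bound of \cref{thm:curv:rotation_angle}, whereas the paper simply expands $4\gamma AG^2\v e_1=\gamma\v u/\zeta^2+\cO(\zeta^{-3})$ and notes that it is parallel to $\v s'(0)$ up to $\cO(\zeta^{-3})$; both give the same order.
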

\begin{proof}
With $\nabla z(\v s) = 2A\v s + 3C(\v s,\v s,\cdot) + \cO(\norm{\v s}^3)$, the closest point condition in the proof of \cref{thm:curv:parametrization} becomes $\v s(h)-c(h)\nabla z(\v s(h)) = h\v e_1$ with $c(h) = \zeta+\gamma h-z(\v s(h))$, so $c(0) = \zeta$ and $c'(0) = \gamma$. Differentiating once and twice at $h = 0$, where $\v s = \v 0$, gives
\[ \v s'(0) = G \v e_1 \text{ and } G^{-1}\v s''(0) = 4\gamma A\v s'(0)+6\zeta\, C(\v s'(0),\v s'(0),\cdot).\]
 Since $\alpha'(0) = (\v s'(0),0)$ and $\cT_{\v y_0}\cM = \bbR^{d-1}\times\{0\}$, the geodesic curvature at $\v y_0$ is
 \[\kappa_g(\v y_0) = \frac{\norm{\Proj_{\v s'(0)^\perp}\v s''(0)}}{\norm{\v s'(0)}^2}.\]
  Under \cref{asm:curv:convex_side}, $G = -(2\zeta)^{-1}A^{-1}+\cO(\zeta^{-2})$, so $\v s'(0) = -\v u/(2\zeta)+\cO(\zeta^{-2})$. The first term of $\v s''(0)$ is $4\gamma AG^2\v e_1 = \gamma\v u/\zeta^2+\cO(\zeta^{-3})$, which is parallel to $\v s'(0)$ up to $\cO(\zeta^{-3})$, and the second is $-\frac{3}{4\zeta^2}A^{-1}C(\v u,\v u,\cdot)+\cO(\zeta^{-3})$. Dividing by $\norm{\v s'(0)}^2 = \norm{\v u}^2/(4\zeta^2)+\cO(\zeta^{-3})$ gives
  \[\kappa_g(\v y_0) = \frac{3\norm{\Proj_{\v u^\perp}A^{-1}C(\v u,\v u,\cdot)}}{\norm{\v u}^2}+\cO(\zeta^{-1}),\] and letting $\zeta\to\infty$ gives the limit.
\end{proof}
So the third-order terms add to $\v s''(0)$ a term that is in general not parallel to $\v s'(0)$. For a tangent ray, the geodesic curvature at $\v y_0$ is then of order $\zeta\norm{C}$ for small $\zeta$, while it vanishes for the osculating paraboloid, and on the convex side the limit $\kappa_g^\infty$ vanishes only if $A^{-1}C(\v u,\v u,\cdot)$ is parallel to $\v u$. The limit $\zeta\to\infty$ gives a geodesic in every direction where $C$ vanishes, for example at every point of a sphere or of a circular cylinder. The normal part of $\alpha''(0)$ is not affected by $C$: it is $\sff_{\v y_0}(\alpha'(0),\alpha'(0))\v n$ for every parametrization (\cref{sec:curv:prelim_codim}). A curvature estimate that takes the curvature $\kappa$ of the induced curve for the normal curvature is therefore off by
\begin{equation}\label{eqn:curv:third_order_bias}\kappa-\abs{\kappa_n} = \frac{\kappa_g^2}{\kappa+\abs{\kappa_n}}\approx(\kappa_g^\infty)^2/(2\abs{\kappa_n})\end{equation} for large $\zeta$: this is quadratic in $C$, so it is small when the third-order terms are small compared with the curvature, but it does not decay with $\zeta$. Estimates using only the first derivative of $\cP$ or the normal part of $\alpha''(0)$ are not affected.

\section{Estimating the extrinsic curvatures of \texorpdfstring{$\cM$}{M}}\label{sec:curv:curv_compute}

The analysis above shows that, when $\ell \notin \cT_{\v y_0}\cM$, the discrepancy between the curvature of the induced curve $\alpha$ and the normal curvature of $\cM$ decays as $\cO(\gamma^2\zeta^{-2})$ where $\gamma$ denotes the relative slope and $\zeta$ the offset distance, for the osculating paraboloid and under \cref{asm:curv:convex_side}. In addition, \cref{thm:curv:parametrization} shows that a non-tangent ray introduces only an additive $\gamma h$ to $\zeta$. These observations yield a practical proxy for the normal curvature of $\cM$ with controllable error, provided that $\gamma$ is bounded, $h\ll 1$, and $\zeta\gtrsim 1$, up to a bias from the third-order terms of $\cM$, which does not decay with $\zeta$ (\cref{thm:curv:third_order}).

While $\zeta$ can be directly controlled as a hyperparameter when generating $\ell$, eliminating the effect of the slope $\gamma$ would require projection onto the tangent space, which often complicates the algorithm. Moreover, the normal $(\v x_0-\v y_0)/\zeta$ from \cref{thm:curv:cp_normal} is exact only for the exact projection, and it is less reliable when the projection is approximate, as on point clouds.
For practical implementation, it is therefore desirable that the random ray distribution is independent of the underlying manifold $\cM$. We choose $\v r \sim \mathrm{Unif}(\bbS^{d-1})$ for convenience.  Under this setting, we study the behavior of $\gamma$ systematically, and demonstrate both theoretically and numerically that it remains well-behaved.

\subsection{Behaviors of the slope \texorpdfstring{$\gamma$}{gamma} without normal information}\label{sec:curv:concentration}
In a low dimensional setting we have the following.
\begin{proposition}[Polynomial tail decay of slope distributions in low dimensions]\label{thm:curv:slope_tail}
Let the random direction be $\v r \sim \mathrm{Unif}(\bbS^{d-1})$. Then for $d=2, \,3$, the distribution of its slope $\gamma$ satisfies, as $t\to\infty$,
$\bbP(\gamma>t) = \cO\big(t^{-(d-1)}\big)$.
\end{proposition}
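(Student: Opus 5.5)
Fixing the frame of \cref{sec:curv:hypersurface}, write $\v r=(\v r_T,\,r_d)$ with $\v r_T\in\cT_{\v y_0}\cM\cong\bbR^{d-1}$ and $r_d=\inpd{\v r}{\v n(\v y_0)}$. After rescaling $\v r$ so that its tangential part has unit length, as in \eqref{eqn:curv:l_h}, the slope is $\gamma=r_d/\norm{\v r_T}$. The uniform distribution on $\bbS^{d-1}$ is rotation invariant, so this law does not depend on $\v y_0$ or on the normal. We may therefore take $\v n(\v y_0)=\v e_d$ and compute directly. Write $r_d=\cos\phi$ with polar angle $\phi\in[0,\pi]$ measured from $\v e_d$. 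Then $\norm{\v r_T}=\sin\phi$ and $\gamma=\cot\phi$. The event $\{\gamma>t\}$ for large $t$ is exactly the event $\{\phi<\operatorname{arccot} t\}$, a small polar cap around the normal. The whole question reduces to estimating the measure of this cap.

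For $d=2$, $\phi$ is uniform on $[0,2\pi)$ (as the full angle), so
\[\bbP(\gamma>t)=\frac{2\operatorname{arccot}t}{2\pi}.\]
This accounts for the cap around $\v e_2$ on both sides of the normal. Since $\operatorname{arccot}t\le 1/t$, this gives $\bbP(\gamma>t)\le 1/(\pi t)=\cO(t^{-1})$.

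For $d=3$, Archimedes' theorem says $r_3$ is uniform on $[-1,1]$. Hence
\[\bbP(\gamma>t)=\bbP\big(r_3>t/\sqrt{1+t^2}\big)=\tfrac12\big(1-t/\sqrt{1+t^2}\big).\]
Expanding $1-(1+t^{-2})^{-1/2}=\tfrac12 t^{-2}+\cO(t^{-4})$ gives $\bbP(\gamma>t)\le \tfrac14 t^{-2}$, which is $\cO(t^{-2})$. Alternatively one can write $\gamma$ as a ratio of Gaussians and bound the cap area by $\pi\phi_0^2/(4\pi)$ with $\phi_0=\operatorname{arccot}t\le 1/t$.

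The main subtlety is not analytic. It is confirming that the slope convention matches \eqref{eqn:curv:l_h}, namely that $\gamma$ is the normal component divided by the tangential norm. Rays with $\v r_T=\v 0$ give $\gamma=\pm\infty$, but they form a null set. It is also worth noting in passing how the same cap argument generalizes: the cap measure behaves like $\phi_0^{d-1}\sim t^{-(d-1)}$, which explains the exponent.
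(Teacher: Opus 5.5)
Your proposal is correct and follows essentially the paper's argument. Both reduce to the law of the normal component $r_d$, use the uniform angle for $d=2$ and Archimedes' theorem for $d=3$, and expand $1-(1+t^{-2})^{-1/2}$. The only difference is that you bound the signed slope $r_d/\norm{\v r_T}$, whereas the paper works with $Z=\abs{\v r^T\v n}$. This doubles the constants to $(2/\pi)\arctan(1/t)$ and $1/(2t^2)$ but leaves the rates unchanged.
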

\begin{proof}
Let $\v n$ be the fixed unit normal, and define $Z\defeq \abs{\v r^T\v n}\in [0, 1]$. For $d=2$, $Z=\abs{\cos\theta}$, where $\theta\sim \mathrm{Unif}[0,2\pi)$ is the angle between $\v r$ and $\v n$. For $d=3$, $Z$ is uniformly distributed on $[0, 1]$ (Archimedes' theorem). The slope parameter is $\gamma\defeq Z/\sqrt{1-Z^2}$. Thus $\bbP(\gamma > t) = \bbP\big(Z> t/\sqrt{t^2+1}\big)$, and as $t\to\infty$,
\[
\bbP(\gamma > t) = \begin{dcases}
    \bbP\Big(\abs{\cos\theta}>t/\sqrt{t^2+1}\Big) = (2/\pi)\arctan(1/t) \sim 2/(\pi t),  & d=2,\\
    1- 1/\sqrt{1+1/t^2} = 1/(2t^2) +\cO(1/t^4),& d=3.
\end{dcases}
\]
\end{proof}

The polynomial tail decay of $\gamma$ in low dimensions already makes it possible to avoid explicit normal information. This decay is even more pronounced in higher dimensions, reflecting the concentration of measure phenomenon~\cite{boucheron2003concentration}. One representative result is due to P.~L\'evy.
\begin{lemma}[P.~L\'evy's concentration lemma, 1919~\cite{levy1922leccons,levy1951problemes}]\label{thm:curv:levy}
Let $\sigma$ denote the normalized uniform measure on the unit hypersphere $\bbS^{d-1}\subset\bbR^d$, and let $f:\bbS^{d-1}\to\bbR$ be a $c_f$-Lipschitz function with median $m_f$. Then $\sigma\big(\abs{f-m_f}\ge \varepsilon\big) \le 2\exp\big(-(d-2)\varepsilon^2/(2c_f^2)\big)$.
\end{lemma}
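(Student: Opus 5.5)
The plan is to derive the bound from the isoperimetric inequality on the sphere. The key tool is Lévy's spherical isoperimetric inequality: among all measurable subsets $B\subset\bbS^{d-1}$ with $\sigma(B)\ge 1/2$, the $\varepsilon$-neighborhood $B_\varepsilon$ (in geodesic distance) has measure at least that of the $\varepsilon$-neighborhood of a hemisphere. We may assume this inequality as classical.

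First I would treat the upper deviation. Let $B\defeq\{f\le m_f\}$. By definition of the median, $\sigma(B)\ge 1/2$. If $\v x\in B_{\varepsilon/c_f}$, then there is $\v y\in B$ with geodesic distance at most $\varepsilon/c_f$ from $\v x$. The Euclidean distance is no larger than the geodesic distance, so the Lipschitz property gives $f(\v x)\le f(\v y)+\varepsilon\le m_f+\varepsilon$. Hence
\[
\{f\ge m_f+\varepsilon\}\subset \bbS^{d-1}\setminus B_{\varepsilon/c_f}.
\]
Running the same argument with $B'\defeq\{f\ge m_f\}$ covers the lower deviation. A union bound then produces the factor $2$.

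It therefore suffices to show that the complement of the $t$-neighborhood of a hemisphere, which is a spherical cap of angular radius $\pi/2-t$, has measure at most $\exp(-(d-2)t^2/2)$. This is the main computational step. I would write the cap measure as a ratio of integrals,
\[
\frac{\int_t^{\pi/2}\cos^{d-2}\theta\,\dd{\theta}}{\int_{-\pi/2}^{\pi/2}\cos^{d-2}\theta\,\dd{\theta}}.
\]
Two elementary estimates then apply. The first is $\cos\theta\le e^{-\theta^2/2}$ on $[0,\pi/2]$. The second is a lower bound on the denominator using the same Gaussian comparison, or alternatively the known value of the Wallis integral. Together these reduce the ratio to a Gaussian tail with variance $1/(d-2)$. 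That tail is bounded via $\int_t^\infty e^{-(d-2)\theta^2/2}\,\dd\theta\big/\int_0^\infty e^{-(d-2)\theta^2/2}\,\dd\theta\le e^{-(d-2)t^2/2}$. The last inequality is the standard sub-Gaussian bound $\bbP(|g|>t)\le e^{-t^2/2}$ after rescaling.

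Setting $t=\varepsilon/c_f$ gives the stated inequality. The delicate point is the denominator comparison. There, $\cos^{d-2}$ is not dominated below by the Gaussian, so constants can leak in. If a sharp constant proves elusive, I would instead use the Euclidean-cap form: the cap $\{x_1\ge s\}$ has measure at most $e^{-(d-2)s^2/2}$, which can be proved via the substitution $x_1=\sin\theta$ and the inequality $(1-s^2)^{(d-2)/2}\le e^{-(d-2)s^2/2}$. I would then note that the angular distance $t$ corresponds to $s=\sin t\ge$ the required threshold. Care with the constant in the exponent is expected to be the main obstacle.
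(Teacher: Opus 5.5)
The paper does not prove this lemma. It is quoted from L\'evy and used only for $f(\v r)=r_d$ in \cref{thm:curv:concentration_sphere}.

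Your main route is the standard derivation from L\'evy's isoperimetric inequality, and the reduction is sound. The set $B=\{f\le m_f\}$ is closed with $\sigma(B)\ge 1/2$. The Lipschitz bound, in either the chordal or the geodesic metric, places $\{f\ge m_f+\varepsilon\}$ outside the geodesic $t$-neighborhood of $B$ with $t=\varepsilon/c_f$. So its measure is at most that of the cap $\{x_1>\sin t\}$, and the two one-sided events give the factor $2$.

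The step you flagged does close, but not by the ``same Gaussian comparison'': $\cos^{n}\theta\le e^{-n\theta^2/2}$ only bounds the denominator from above. Use Wallis instead. The integrals $W_n=\int_0^{\pi/2}\cos^n\theta\,d\theta$ satisfy $(n+1)W_nW_{n+1}=\pi/2$ and $W_{n+1}\le W_n$, hence $W_n\ge\sqrt{\pi/(2(n+1))}$. For $n=d-2\ge1$ this gives $2W_n\ge\sqrt{\pi/(2n)}=\int_0^\infty e^{-n\theta^2/2}\,d\theta$, with a factor $\sqrt2$ to spare. That is exactly the inequality your displayed reduction needs, and $\bbP(|g|\ge\sqrt n\,t)\le e^{-nt^2/2}$ then finishes the proof. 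For $d=2$ the right-hand side is $2$ and there is nothing to prove.

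Drop the fallback for the lemma as stated, because its key inequality runs the wrong way. The leftover cap is $\{x_1>\sin t\}$ and $\sin t\le t$. A bound $e^{-(d-2)s^2/2}$ at $s=\sin t$ therefore only yields $\exp\big(-(d-2)\sin^2(\varepsilon/c_f)/2\big)$, which is strictly weaker than the claim for every $\varepsilon>0$ when $d\ge3$.

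A Euclidean cap estimate is nonetheless all the paper actually needs. For $f(\v r)=r_d$ with median $0$, the event $\{|r_d|\ge\varepsilon\}$ is exactly the two caps $\{\pm r_d\ge\varepsilon\}$. A cap bound such as Ball's $\sigma(\{x_1\ge s\})\le e^{-ds^2/2}$ then gives \eqref{eqn:curv:concentration} directly, without the isoperimetric inequality.
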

Following~\cref{thm:curv:levy}, we immediately arrive at the high probability concentration of the last coordinate.
\begin{corollary}\label{thm:curv:concentration_sphere}
For a uniformly random sample $\v r$ on the $(d-1)$-dimensional unit sphere, almost all mass concentrates near the equator. More precisely, for any fixed coordinate system in which $\v r = (r_1,\, r_2,\,\dots,\, r_d)$,
\begin{equation}\label{eqn:curv:concentration}
  \bbP\big(\abs{r_d}\ge \varepsilon\big) \le 2e^{-(d-2)\varepsilon^2/2}.
\end{equation}
\end{corollary}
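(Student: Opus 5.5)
We apply \cref{thm:curv:levy} to the coordinate function $f(\v r)\defeq r_d=\inpd{\v r}{\v e_d}$ restricted to $\bbS^{d-1}$. There are three things to check: that $f$ is Lipschitz with constant $c_f=1$, that its median is $m_f=0$, and then that the bound \eqref{eqn:curv:concentration} follows by substitution.

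For the Lipschitz constant, Cauchy--Schwarz gives $\abs{f(\v r)-f(\v r')}=\abs{\inpd{\v r-\v r'}{\v e_d}}\le\norm{\v r-\v r'}$. This bounds $f$ with respect to the Euclidean (chordal) distance. If \cref{thm:curv:levy} is read with the geodesic distance on the sphere, the same constant still works, because the chordal distance is at most the geodesic distance. Hence $c_f=1$ in either reading.

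For the median, note that the uniform measure $\sigma$ is invariant under the reflection $\v r\mapsto(r_1,\dots,r_{d-1},-r_d)$, so $r_d$ has a symmetric distribution. The equator $\{r_d=0\}$ has $\sigma$-measure zero, so $\bbP(r_d>0)=\bbP(r_d<0)=1/2$. Therefore $0$ is a median, and it is the natural choice when the median is not unique. Substituting $m_f=0$ and $c_f=1$ into \cref{thm:curv:levy} gives $\bbP(\abs{r_d}\ge\varepsilon)\le 2e^{-(d-2)\varepsilon^2/2}$.

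The statement says ``any fixed coordinate system'', which requires the bound for every coordinate, not just $r_d$. This follows from rotation invariance of $\sigma$: for any fixed unit vector $\v n$, the variable $\inpd{\v r}{\v n}$ has the same distribution as $r_d$. In particular this covers $\v n=\v n(\v y_0)$, the case used later for the slope $\gamma$. None of these steps is a real obstacle. The only point needing care is matching the conventions of \cref{thm:curv:levy}, namely the Lipschitz metric and the definition of the median, and both are handled above.
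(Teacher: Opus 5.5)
Your proposal is correct and matches the paper's proof: apply Lévy's lemma to the $1$-Lipschitz coordinate function $r_d$, which has median $0$ by symmetry. Your additional checks (chordal versus geodesic Lipschitz constant, the equator having measure zero, and rotation invariance to cover any fixed orthonormal frame) are valid details that the paper leaves implicit.
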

\begin{proof}
Apply~\cref{thm:curv:levy} to $f(\v r)=r_d$, which is $1$-Lipschitz and, by symmetry, has median $0$.
\end{proof}
Thus, the probability that $M$ samples of such $r_d$ all lie within the band $\abs{r_d}\le\varepsilon$ is bounded below by
\[
 \bbP\big(\abs{r_d^i}\le\varepsilon,\; i=1,\,2,\,\dots,\,M\big)
  = \bbP(\abs{r_d}\le\varepsilon)^M
  \ge \Big(1-2e^{-\frac{d-2}{2}\varepsilon^2}\Big)^M
  \ge 1-2Me^{-\frac{d-2}{2}\varepsilon^2},
\]
where the last inequality is tight in the desired regime $Me^{-(d-2)\varepsilon^2/2} \ll 1$. To enforce a rapidly decaying failure probability with the dimension, one may prescribe
\begin{equation}\label{eqn:curv:p_lowbound}
\begin{gathered}
  2Me^{-\frac{d-2}{2}\varepsilon^2} < \frac{1}{d} \iff \varepsilon > \sqrt{\frac{2}{d-2}\ln(2dM)},\\
  \text{which implies}\quad\bbP\big(\abs{r_d^i}\le\varepsilon \text{ for all } i=1,\,\dots,\,M\big) > 1-\frac{1}{d}.
\end{gathered}
\end{equation}
The identity $\gamma = r_d/\sqrt{1-r_d^2}$ allows us to translate the probabilistic bounds on $r_d$ into explicit constraints on the slope $\gamma$: with probability at least $1-1/d$, all $\abs{r_d^i}\le\varepsilon$, and then $\abs{\gamma_i}\le\varepsilon/\sqrt{1-\varepsilon^2}\le\sqrt2\,\varepsilon$ if $\varepsilon^2\le1/2$. With $\varepsilon$ at the bound in \eqref{eqn:curv:p_lowbound}, this gives
\begin{equation}\label{eqn:curv:slope_bound}
  \abs{\gamma_i}\le 2\sqrt{\frac{\ln(2dM)}{d-2}}\quad\text{for all } i,
\end{equation}
as long as this bound is at most $1$. This worst-case bound is for the largest of the $M$ slopes, while a single slope is much smaller on average. The coordinate $r_d$ has the density $c_d(1-t^2)^{(d-3)/2}$ on $[-1,1]$, with $c_d$ being the normalizing constant (for $d = 3$ it is uniform, as in the proof of \cref{thm:curv:slope_tail}), and a direct integration gives
\begin{equation}\label{eqn:curv:slope_moments}
  \bbE\abs{\gamma} = \frac{2c_d}{d-2}\approx\sqrt{\frac{2}{\pi d}},\qquad \bbE[\gamma^2] = \frac{1}{d-3}\quad(d\ge4).
\end{equation}
The approximation holds for large $d$. Denote $\v r = \v X/\norm{\v X}$ with $\v X$ a standard Gaussian vector in $\bbR^d$. By the Gaussian annulus theorem~\cite[Thm.~3.1.1]{vershynin2018high}, $\norm{\v X}$ is close to $\sqrt d$, so $\sqrt d\,r_d$ is close to the standard Gaussian $X_d$, and $\gamma\approx r_d$, so $\bbE\abs{\gamma}$ is close to $\sqrt{2/(\pi d)}$, the mean of the absolute value of a Gaussian with variance $1/d$. By comparing these theoretically worst-case bounds with empirical values from numerical simulations, we assess sharpness and practical relevance of the lower bound~\eqref{eqn:curv:p_lowbound}.
\begin{figure}[htbp]
  \centering
  \includegraphics[width=0.9\textwidth]{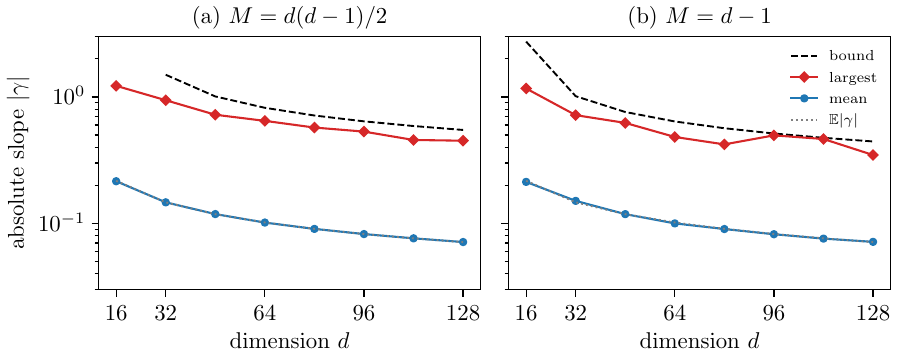}
  \caption{Comparison between theoretical bounds and simulated values.
  The bound $\varepsilon/\sqrt{1-\varepsilon^2}$ on the absolute slopes of $M$ uniformly random rays from~\eqref{eqn:curv:p_lowbound}, which holds with probability at least $1-1/d$ (dashed), against the largest and the mean absolute slope observed over $100$ draws of the $M$ rays, for (a) $M = d(d-1)/2$ and (b) $M = d-1$. Dotted: the mean $\bbE\abs{\gamma}$ of \eqref{eqn:curv:slope_moments}.}
  \label{fig:curv:gamma_compare}
\end{figure}

The comparison in~\cref{fig:curv:gamma_compare}, carried out for ambient dimensions ranging from $d=16$ to $d=128$, demonstrates that the theoretical bound provides sufficient and effective control over extreme slope values, despite the small probability of failure predicted by~\eqref{eqn:curv:p_lowbound}. In particular, $\gamma$ is well-controlled across the entire range of dimensions tested, and improves with increasing $d$, consistent with the concentration of measure phenomenon; the mean absolute slope follows \eqref{eqn:curv:slope_moments}.

\subsection{Reconstructing the second fundamental form}\label{sec:curv:recon_2nd}
Having established that non-tangent rays introduce only a controllable error, we now present practical methods inspired by the framework developed in~\cref{sec:curv:hypersurface}. The first one recovers extrinsic curvatures through reconstructing the second fundamental form $\sff_{\v y}$ of \cref{def:curv:2nd_fund}.
The bilinear form $\sff_{\v y}$ can be represented as a symmetric $(d-1)\times(d-1)$ matrix $\bar S_{\v y}$ under an orthonormal tangent basis. The \emph{principal curvatures} of $\cM$ at $\v y$ are precisely the eigenvalues of $\bar S_{\v y}$, and the \emph{normal curvature} along any unit $\bar{\v v}\in\cT_{\v y}\cM$ is given by $\sff_{\v y}(\bar{\v v}, \bar{\v v}) = \bar{\v v}^T\bar S_{\v y}\bar{\v v}$.

Since the normal curvatures are estimable using the induced-curve analysis developed in~\cref{sec:curv:hypersurface}, by collecting sufficiently many pairs of linearly independent unit directions $\{\bar{\v v}_i\}$ and normal curvatures $\sff_{\v y}(\bar{\v v}_i, \bar{\v v}_i)$, one may estimate $\bar S_{\v y}$ through a least squares formulation:
\begin{equation}\label{eqn:curv:lstsq}
    \hat{\bar S}_{\v y} = \argmin_{X} \sum_{i=1}^{M} \Big(\bar{\v v}_i^TX\bar{\v v}_i -  \sff_{\v y}(\bar{\v v}_i, \bar{\v v}_i)\Big)^2.
\end{equation}
A curvature measured along an induced curve has to be paired with the direction of that curve, not with the direction of the ray. By \cref{thm:curv:parametrization,thm:curv:r_convergence}, the curve induced by a ray with tangential part $\v e_1$ points in the direction of $(I-2\zeta A)^{-1}\v e_1$ at $\v y$, which differs from $\v e_1$ unless $\v e_1$ is a principal direction, and tends to $\Lambda^{-1}\v v$ as $\zeta\to\infty$. This direction can be measured without the normal: up to $\cO(h^2)$, it is the direction of the central difference $\cP(\v x_0+h\v r_i)-\cP(\v x_0-h\v r_i)$. These directions span $\cT_{\v y}\cM$ up to $\cO(h^2)$, so the least squares problem below can be solved on their span.

When using rays $\v r_i \notin \cT_{\v y}\cM$, the induced curvature no longer coincides with the normal curvature of $\cM$, and error propagation must be quantified. Fix $\v y\in\cM$; for induced curves $\alpha_i$ starting from $\v y$ generated by rays along directions $\v r_i$, $i=1,\, 2,\,\dots,\,M$, denote by $\v t_i\in\cT_{\v y}\cM$ the unit tangent of $\alpha_i$ at $\v y$, by $\kappa_{\alpha_i}$ its curvature at $\v y$, by $\kappa_{n,i}$ the corresponding normal curvature of $\cM$ in the direction $\v t_i$, and by $\omega_i$ the error between them. Under \cref{asm:curv:convex_side} all normal curvatures at $\v y$ are negative with respect to $\v n$, so $\kappa_{n,i} = -\abs{\kappa_{n,i}}$, and by~\cref{thm:curv:r_convergence}, for the osculating paraboloid,
\begin{equation}\label{eqn:curv:quadratic_form}
  -\kappa_{\alpha_i} = \kappa_{n,i} - \omega_i = \v t_i^T S_{\v y} \v t_i - \omega_i, \qquad 0\le\omega_i = \cO(\gamma_i^2 \zeta^{-2}),
\end{equation}
where $S_{\v y}\defeq Q \bar S_{\v y} Q^T$ denotes the extension of $\bar S_{\v y}$ to $\bbR^d$ under some orthonormal tangent basis $Q\in\bbR^{d\times (d-1)}$, such that $\v t_i^T S_{\v y} \v t_i = \bar{\v v}_i^T \bar S_{\v y}  \bar{\v v}_i$ for $\bar{\v v}_i = Q^T \v t_i$. $S_{\v y}$ still stores all information of the extrinsic curvatures. For $\cM$ itself, $\omega_i$ also contains a part due to the third-order terms of $\cM$ at $\v y$, which does not decrease with $\zeta$ (\cref{thm:curv:third_order}).

Let $\xi$ denote the vectorization of the symmetric matrix $S_{\v y}$, and let $\psi_i \defeq \v t_i \otimes \v t_i \in \bbR^{d^2}$ with $\otimes$ denoting the Kronecker product. Then~\eqref{eqn:curv:quadratic_form} converts to
\[
  -\kappa_{\alpha_i} = \psi_i^T \xi - \omega_i \implies
  -\bs\kappa_\alpha = \Psi \xi - \Omega,
\]
where $\bs\kappa_\alpha, \Omega\in\bbR^M$ collect the corresponding measurements and error terms, and $\Psi\in\bbR^{M\times d^2}$ has rows given by $\psi_i^T$. In practice, since only $\bs\kappa_\alpha$ is accessible, the least-squares estimator from~\eqref{eqn:curv:lstsq} now becomes
\begin{equation}\label{eqn:curv:lstsqsol}
    \hat\xi = -\Psi^{\dag} \bs\kappa_\alpha =  \Psi^{\dag}(\Psi\xi - \Omega) = \xi - \Psi^{\dag}\Omega,
\end{equation}
where the solvability of the system requires the $\v t_i\otimes\v t_i$ to span the symmetric matrices on $\cT_{\v y}\cM$, or equivalently, the design matrix is of full rank on that space. Reshaping $\hat\xi$ into the symmetric matrix yields the desired approximation $\hat S_{\v y}$ to $S_{\v y}$. The complete procedure is summarized in~\cref{alg:curv:recon_2nd}.

\begin{algorithm}[htbp]
\caption{Computing principal curvatures via reconstruction of the second fundamental form without exact normal}\label{alg:curv:recon_2nd}
\begin{algorithmic}
  \STATE \textbf{Input:} $\cM$: a hypersurface in $\bbR^d$ satisfying \cref{asm:curv:standing}, and \cref{asm:curv:convex_side} at the base point; $M$: the number of rays; $h$: the step size.
  \STATE Sample a point $\v x_0$ in the ambient space; find $\v y\defeq\cP\v x_0\in\cM$ and $\zeta = \norm{\v x_0-\v y}$.
  \STATE Generate random unit vectors $\{\v r_i\}_{i=1}^{M}$, $M\ge d(d-1)/2$.
  \FOR{$i=1:M$}
    \STATE Vary from $\v x_0$ along $\v r_i$ to generate the induced closest-point curve $\alpha_i$; estimate the curvature $\kappa_{\alpha_i}$ of $\alpha_i$ at $\v y$ and its unit tangent $\v t_i$.
  \ENDFOR
  \STATE Solve for $\hat S_{\v y}$, a symmetric matrix on the span of the $\v t_i$, by least squares from the $M$ equations $-\kappa_{\alpha_i} = \v t_i^T \hat S_{\v y} \v t_i$.
  \STATE Eigen-decompose $\hat S_{\v y}$; its first $d-1$ eigenvalues are the estimated principal curvatures $\hat\kappa_i$ at $\v y$.
\end{algorithmic}
\end{algorithm}

\begin{remark}\label{rmk:curv:meusnier}
The closest point pair also gives the normal $\v n = (\v x_0-\v y)/\zeta$ (\cref{thm:curv:cp_normal}) when the underlying target is smooth. With it, the normal curvature in the direction $\v t_i$ can be read off without any geodesic approximation: by \eqref{eqn:curv:curv_decomp} and Meusnier's theorem \eqref{eqn:curv:meusnier}, $\kappa_{n,i} = \inpd{\alpha_i''(0)}{\v n}/\norm{\alpha_i'(0)}^2$ for any parametrization of $\alpha_i$, with its sign. Using this quantity instead of $-\kappa_{\alpha_i}$ removes the error $\omega_i$ in \eqref{eqn:curv:quadratic_form}, including the part due to the third-order terms, and it needs neither \cref{asm:curv:convex_side} nor a large offset. The price is the use of the normal, which is exact for the exact projection but only approximate when the projection is approximate, as discussed at the start of this section.
\end{remark}

\subsection{Error analysis of~\texorpdfstring{\eqref{eqn:curv:lstsqsol}}{the least squares reconstruction}}\label{sec:curv:alg_recon_error}
The reconstruction error with random rays is bounded by
\begin{equation}\label{eqn:curv:error_reconstruct_B}
  \norm{\xi - \hat\xi} = \norm{\Psi^{\dag} \Omega} \le \norm{\Psi^{\dag}}\norm{\Omega},
\end{equation}
which requires bounds on both the error magnitude $\norm{\Omega}$ and the conditioning of the design matrix $\Psi$ on the symmetric matrices on $\cT_{\v y}\cM$.

By~\eqref{eqn:curv:slope_bound}, with probability at least $1-1/d$, $\abs{\gamma_i}\le 2\sqrt{\ln(2dM)/(d-2)}$ for all $i$. It follows that, with high probability,
\begin{equation}\label{eqn:curv:omega_bound}
  \norm{\Omega} \le \sqrt{M}\max_i\omega_i \lesssim \cO\Big(\frac{\ln(2dM)}{d\,\zeta^2}\Big)\sqrt{M}.
\end{equation}
On the other hand, $\norm{\Psi^{\dag}}$ is related to $\sigma_{\min}(\Psi)$, the smallest nonzero singular value of $\Psi$. Note each row of $\Psi$ is obtained via the Veronese mapping of degree $2$, $\nu_2(\v t) = \v t \otimes \v t$~\cite{harris1992algebraic,hartshorne2013algebraic}. For uniformly distributed directions, its covariance on the symmetric traceless matrices is $2/(k(k+2))$ times the identity, $k = d-1$ (\cref{thm:curv:isotropic}), and a matrix concentration argument gives $\norm{\Psi^\dagger} = \cO(1)$ once $M\gtrsim d^2\ln d$ (\cref{thm:curv:conditioning}). These two results and the proof of the following theorem are in \cref{sec:curv:app_alg1}.

\begin{theorem}[Errors of reconstructing the second fundamental form with random rays]\label{thm:curv:error_alg1}
Under \cref{asm:curv:convex_side}, let $\cM$ be represented near $\v y$ by its osculating paraboloid, and let $M\ge C(d-1)^2\ln d$ rays be drawn uniformly at random with offset distance $\zeta$, and let the curvature be nondegenerate at $\v y$. Then, with probability at least $1-2/d$, the eigenvalues $\hat\kappa_i$ of the second fundamental form estimated from~\eqref{eqn:curv:lstsqsol} satisfy
\begin{equation}\label{eqn:curv:alg1_frobenius}
  \sum_{i=1}^{d-1} \absmid{\hat\kappa_i-\kappa_i}^2 \le \norm{\hat S_{\v y}-S_{\v y}}_F^2
  \lesssim \cO\Big(\frac{\ln^2(d)}{\zeta^4}\Big),
\end{equation}
which leads to a bound on the individual error
\begin{equation}\label{eqn:curv:alg1_rate}
  \absmid{\hat\kappa_i-\kappa_i}\lesssim\cO\Big(\frac{\ln(d)}{\zeta^2}\Big),
  \quad i=1,\,2,\,\dots,\,d-1,
\end{equation}
where $\kappa_i$'s are the corresponding principal curvatures, and the constants depend on the condition number $\tau_A$ of the matrix $A$ of this paraboloid and on $\min_i\abs{\kappa_i}$.
\end{theorem}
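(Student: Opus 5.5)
The plan is to combine the error identity \eqref{eqn:curv:lstsqsol}, $\hat\xi-\xi=-\Psi^\dagger\Omega$, with separate high-probability bounds on $\norm{\Omega}$ and $\norm{\Psi^\dagger}$. A union bound over the two failure events, each of probability at most $1/d$, gives the stated confidence $1-2/d$.

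\textbf{Eigenvalue step.} The first inequality in \eqref{eqn:curv:alg1_frobenius} is the Hoffman--Wielandt inequality for the symmetric matrices $\hat S_{\v y}$ and $S_{\v y}$, both restricted to the $(d-1)$-dimensional tangent space. Here the eigenvalues are paired in sorted order. The individual bound \eqref{eqn:curv:alg1_rate} then follows from $\abs{\hat\kappa_i-\kappa_i}\le\norm{\hat S_{\v y}-S_{\v y}}_2\le\norm{\hat S_{\v y}-S_{\v y}}_F$, or from Weyl's inequality. So it suffices to bound $\norm{\hat S_{\v y}-S_{\v y}}_F=\norm{\xi-\hat\xi}$, using the vectorization of symmetric matrices on $\cT_{\v y}\cM$.

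\textbf{Bounding $\norm{\Omega}$.} By \cref{thm:curv:r_convergence}, each $\omega_i\le\mathrm{Res}_i^2/(2\abs{\check\kappa_i})$. From \eqref{eqn:curv:res_asymbound}, $\mathrm{Res}_i\lesssim\abs{\gamma_i}\tau_A(\tau_A-1)/\zeta$, and $\abs{\check\kappa_i}\ge\min_j\abs{\kappa_j}$ because a normal curvature is a Rayleigh quotient of $S_{\v y}$. Hence $\omega_i\le C(\tau_A,\min\abs{\kappa_j})\,\gamma_i^2\zeta^{-2}$.

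On the event of \eqref{eqn:curv:slope_bound}, which has probability at least $1-1/d$, we have $\max_i\gamma_i^2\le4\ln(2dM)/(d-2)$. Therefore
\[\norm{\Omega}\le\sqrt M\,C\,\frac{\ln(2dM)}{d\,\zeta^2}.\]
With $M$ of order $d^2\ln d$ we get $\ln(2dM)=\cO(\ln d)$. This reproduces \eqref{eqn:curv:omega_bound}.

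\textbf{Bounding $\norm{\Psi^\dagger}$.} This is the main obstacle. The rows of $\Psi$ are $\v t_i\otimes\v t_i$, where $\v t_i$ is the normalized direction of $(I-2\zeta A)^{-1}$ applied to the tangential part of $\v r_i$. These are \emph{not} uniform on the tangent sphere, so I cannot apply an isotropy statement to them directly. My plan has three parts.

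\emph{Isotropy.} Use \cref{thm:curv:isotropic} for the uniform tangential parts $\v v_i$: their second moment on symmetric matrices is bounded below by a multiple of $1/k^2$ on the traceless part. The trace direction has a separate constant moment.

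\emph{Concentration.} Apply matrix Chernoff (\cref{thm:curv:conditioning}) to conclude $\sigma_{\min}(\Psi)\gtrsim\sqrt{M}/k$ once $M\ge Ck^2\ln d$. This holds with probability at least $1-1/d$.

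\emph{Transfer.} Pass from the $\v v_i$ to the $\v t_i$ through the fixed linear map $\v v\mapsto G\v v$, $G=(I-2\zeta A)^{-1}$. The induced map $G\otimes G$ distorts singular values by at most $\kappa(G)^2\le\tau_A^2$. Normalizing rows changes each row by a scalar weight, which is bounded above and below in terms of $\tau_A$. Altogether this gives $\norm{\Psi^\dagger}\lesssim \tau_A^{c}\,k/\sqrt M$.

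\textbf{Combining.} Putting the two bounds together,
\[\norm{\xi-\hat\xi}\le\norm{\Psi^\dagger}\norm{\Omega}\lesssim\frac{k}{\sqrt M}\cdot\sqrt M\frac{\ln d}{d\zeta^2}=\cO\Big(\frac{\ln d}{\zeta^2}\Big).\]
Squaring gives \eqref{eqn:curv:alg1_frobenius}, with the constant absorbing $\tau_A$ and $\min_i\abs{\kappa_i}$.

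The delicate point is the scaling. The loose bound $\norm{\Omega}\le\sqrt M\max\omega_i$ must be matched by $\sigma_{\min}\sim\sqrt M/k$ so that $M$ cancels and the factor $k/d$ stays $\cO(1)$. If the transfer through $G$ loses more than constants depending on $\tau_A$, the claimed rate fails. So I would verify that step first.
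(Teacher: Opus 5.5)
Your proposal is correct and follows the paper's proof essentially step for step: the split $\norm{\xi-\hat\xi}\le\norm{\Psi^\dagger}\norm{\Omega}$, the bound $\omega_i\le\mathrm{Res}_i^2/(2\abs{\kappa_{n,i}})$ on the slope event of \eqref{eqn:curv:slope_bound}, \cref{thm:curv:conditioning} for the uniform tangential parts transferred through $G\otimes G$, a union bound, and Hoffman--Wielandt. The transfer step you flag closes as in the paper. The row weights $1/\norm{G\v v_i}^2$ are not bounded in terms of $\tau_A$ alone, but they are at least $\sigma_{\max}(G)^{-2}$, and this combines with the contraction $\sigma_{\min}(G)^2$ of $G\otimes G$ on symmetric matrices to give $\norm{\Psi^\dagger}\le\kappa(G)^2\sqrt{2k(k+2)/M}$, where $\kappa(G)=(1+2\zeta\abs{\lambda_{\max}})/(1+2\zeta\abs{\lambda_{\min}})\le\tau_A$; so $c=2$ and no $\zeta$-dependence leaks in.
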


\begin{remark}[The error bound as a function of $d$]\label{rmk:curv:alg1_rate}
\Cref{thm:curv:error_alg1} uses only $0\le\omega_i\le\max_j\omega_j$, and from this alone its rate in $d$ cannot be improved. In practice, the errors are of order $1/(d\zeta^2)$ because of the structure of the $\omega_i$. By \cref{thm:curv:parametrization}, $\v s''(0) = 4\gamma\,GAG\v e_1$ is linear in the slope, so the geodesic curvature of an induced curve is $\kappa_g = 4\abs\gamma\,q(\v u)$, where $\v u$ is the direction of the tangential part of the ray and $q$ does not depend on $\gamma$. Hence
\begin{equation}\label{eqn:curv:omega_structure}\omega = \kappa-\abs{\kappa_n}\approx\kappa_g^2/(2\abs{\kappa_n}) = \gamma^2\varphi(\v u) \text{ with }\varphi = 8q^2/\abs{\kappa_n}.\end{equation}
For uniform rays, $\gamma$ is independent of $\v u$, so the mean of $\omega_i$ for a given direction is $\bbE[\gamma^2]\varphi(\v u) = \varphi(\v u)/(d-3)$ by \eqref{eqn:curv:slope_moments}, while the theorem uses $\max_j\gamma_j^2\approx4\ln(2dM)/d$. The function $\varphi$ is of order $\zeta^{-2}$ and is a smooth function of some quadratic forms in $\v u$, so least squares produces an $\cO(\zeta^{-2})$ matrix, which is diagonal in the principal frame by the symmetry $u_l\mapsto-u_l$. Such an error is spread over all directions and cannot concentrate on one principal curvature, as the worst case of the theorem does. Finally, the deviations $(\gamma_i^2-\bbE[\gamma^2])\varphi(\v u_i)$ are independent and have mean zero, so their effect on the least squares solution averages out like $1/\sqrt M$.
\end{remark}

Finally, this approach requires solving a linear system of size $M\times d^2$ with $M = \cO(d^2\ln d)$, leading to an expensive cost of roughly $\cO(d^6\ln d)$. Moreover, the number of rays, and thus of closest point projections, need to scale with $d^2\ln d$. These challenges motivate the development of more efficient alternatives.

\subsection{Direct estimation of principal curvatures}\label{sec:curv:recon_direction}
The requirement of $\cO(d^2\ln d)$ rays arises from explicitly reconstructing all individual entries of $S_{\v y}$, which has $d(d-1)/2$ independent entries. Recall from~\cref{thm:curv:r_convergence} that \emph{the deviation of an induced curve direction is intrinsically linked to the principal curvatures} of $\cM$. Exploiting this connection enables estimating principal curvatures directly without reconstructing $S_{\v y}$.

Let $(\v x_0,\v y_0)$ be a closest-point pair with $\v x_0\in\ell_i$, where $\ell_i$ denotes the $i$-th random ray starting from $\v x_0$, and $\v y_0\in\cM$. Before proceeding, we clarify the expression for the orthogonal projection of $\ell_i$ onto $\cT_{\v y_0}\cM$ under different coordinate frames.
\begin{enumerate}
  \item \emph{Frame--1 (aligned local frame).}
  Under the local coordinate frame in which $\v y_0$ is the origin and $\ell_i$ lies in the $x_1$-$x_d$ plane, the projection is along $\v e_1$.
  \item \emph{Frame--2 (graph coordinate frame).}
  Under the local coordinate frame in which $\v y_0$ is the origin and the osculating paraboloid of $\cM$ is $z(\v s) = \v s^T \Lambda \v s$ with $\Lambda = \diag(\lambda_1,\dots,\lambda_{d-1})$ diagonal, i.e., the frame of principal directions, the projection of $\ell_i$ is along
  $\v v_i = (v_1,\,v_2,\,\dots,\,v_{d-1},0) = Q_i \v e_1$,
  for an orthogonal matrix $Q_i\in O(d)$ depending on the direction of $\ell_i$.
  \item \emph{Frame--3 (global coordinate frame).}
  In the global coordinate frame where $\cM$ is embedded in $\bbR^d$, the direction of the projection is again obtained through an orthogonal transformation
  $\v u_i = \cF(\v v_i) = P \v v_i$,
  where $P\in O(d)$ depends only on $\cM$ and the choice of $\v y_0$.
\end{enumerate}
In practice, computations are carried out in Frame--3. Given a collection of projected directions $\{\v u_j\}_j$ in the tangent space, the corresponding directions in Frame--2 are obtained via $\v v_j = \cF^{-1}(\v u_j) = P^T \v u_j$.
We now present the main result enabling direct extraction of the principal curvatures.
\begin{theorem}\label{thm:curv:recon_direction}
Fix a step size $h\ll 1$ and $\v x_0\in\bbR^d$, whose closest point on $\cM$---satisfying \cref{asm:curv:standing}---is denoted by $\v y_0 = \cP\v x_0$. Let $\zeta=\norm{\v x_0-\v y_0}$, let $\{\v u_i\}_{i=1}^{d-1}\subset\bbR^{d-1}$ be an orthonormal basis of $\cT_{\v y_0}\cM$, and $\{\bar{\v u}_i\}_{i=1}^{d-1}$ be its natural embeddings in $\bbR^d$. Define $\{\v w_i\}_{i=1}^{d-1}$ to be the projections of displacement along the induced curve onto $\cT_{\v y_0}\cM$, i.e.,
\begin{equation}\label{eqn:curv:w_tangent}
  \v w_i = \Big(\cP(\v x_0 + h\bar{\v u}_i)-\v y_0\Big)\Big|_{\cT_{\v y_0}\cM},
\end{equation}
and $U,W\in\bbR^{(d-1)\times(d-1)}$ be matrices whose columns are given by $\{\v u_i\}_{i=1}^{d-1}$ and $\{\v w_i\}_{i=1}^{d-1}$, respectively. Then the principal curvatures $\{\kappa_i\}_{i=1}^{d-1}$ of $\cM$ at $\v y_0$ may be approximated from the eigenvalues $\{\mu_i\}_{i=1}^{d-1}$ of $WU^T$ via
\begin{equation}\label{eqn:curv:extract_kappa}
  \hat\kappa_i = \left(1 - \frac{h}{\mu_i}\right)\Big/\zeta,
\end{equation}
with approximation error
\begin{equation}\label{eqn:curv:direct_rate}
  \hat\kappa_i = \big(1+\cO(h^2/\zeta)\big)\kappa_i.
\end{equation}
\end{theorem}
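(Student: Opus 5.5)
The approach is to reduce everything to the parametrization of \cref{thm:curv:parametrization} with tangent rays ($\gamma=0$). By \eqref{eqn:curv:dP_hyper}, $\cD\cP(\v x_0)=(I-\zeta S)^{-1}\Proj_{\cT_{\v y_0}\cM}$, where $S$ is the shape operator at $\v y_0$ with normal pointing to $\v x_0$. Since each $\bar{\v u}_i$ is tangent, the ray $\v x_0+h\bar{\v u}_i$ has zero slope. I will expand the induced curve as $\v s(h)=h\v s'(0)+\tfrac{h^2}{2}\v s''(0)+\cO(h^3)$.

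By \eqref{eqn:curv:s_derivatives}, $\v s''(0)=4\gamma(\cdots)=\v 0$ for the osculating paraboloid. The tangential projection is exactly the $\v s$-component in the frame of \cref{thm:curv:parametrization}, so
\[
\v w_i=h(I-\zeta S)^{-1}\v u_i+\cO(h^3).
\]
If third-order terms of $\cM$ are retained, \cref{thm:curv:third_order} adds $\tfrac{h^2}{2}G\,6\zeta C(\cdot,\cdot,\cdot)$. I will first argue within the paraboloid model, as in the preceding results, and remark on this term at the end. Stacking the columns gives $W=h(I-\zeta S)^{-1}U+E$ with $\norm{E}=\cO(h^3)$.

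Since $U$ is orthogonal, $WU^T=h(I-\zeta S)^{-1}+EU^T$. This matrix is a small perturbation of a symmetric matrix whose eigenvalues are $h/(1-\zeta\kappa_i)$.

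\emph{Exact case.} If $E=0$, then $\mu_i=h/(1-\zeta\kappa_i)$, and inverting gives $1-h/\mu_i=\zeta\kappa_i$. This is exactly \eqref{eqn:curv:extract_kappa}, so the formula is correct to leading order. Invertibility of $I-\zeta S$ follows from $\zeta<\iota$ and $\abs{\kappa_i}\le1/\iota$, as in the proof of \cref{thm:curv:parametrization}.

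\emph{Perturbation.} The unperturbed matrix $h(I-\zeta S)^{-1}$ is symmetric, so by Bauer--Fike (or Weyl after symmetrizing) $\abs{\mu_i-h/(1-\zeta\kappa_i)}\le\norm{E}=\cO(h^3)$. Writing $\mu_i=\frac{h}{1-\zeta\kappa_i}\,(1+\delta_i)$ with $\delta_i=\cO(h^2)$, we get $h/\mu_i=(1-\zeta\kappa_i)(1-\delta_i+\cO(h^4))$. Hence
\[
\hat\kappa_i=\kappa_i+\delta_i(1-\zeta\kappa_i)/\zeta+\cO(h^4/\zeta).
\]
Here $(1-\zeta\kappa_i)$ is bounded because we are within the reach. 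Absorbing $\kappa_i$ into the constant then gives $\hat\kappa_i=(1+\cO(h^2/\zeta))\kappa_i$, as in \eqref{eqn:curv:direct_rate}. Writing the error multiplicatively requires $\kappa_i$ nondegenerate, or else reading the statement as an additive $\cO(h^2/\zeta)$ bound; I will note this.

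\emph{Main obstacle.} The main difficulty is controlling the $\cO(h^2)$ term in $\v w_i$, that is, the second derivative of $\cP$ along tangent rays. For the osculating paraboloid this term vanishes by \eqref{eqn:curv:s_derivatives}, which is what yields $h^2$ rather than $h$ relative accuracy. For $\cM$ itself, the third-order term of \cref{thm:curv:third_order} contributes $\cO(\zeta h^2\norm{C})$ to $\v w_i$, hence a relative $\cO(h)$ error. I would therefore either state the theorem for the paraboloid model, consistent with \cref{thm:curv:local_geodesic,thm:curv:r_convergence}, or symmetrize with central differences $\cP(\v x_0+h\bar{\v u}_i)-\cP(\v x_0-h\bar{\v u}_i)$, which cancel the even-order term. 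The other delicate point is the eigenvalue perturbation when $WU^T$ is nonsymmetric and eigenvalues cluster. Bauer--Fike with the orthogonal eigenbasis of $S$ (condition number $1$) handles this uniformly.
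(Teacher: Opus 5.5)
Your argument is correct for the osculating paraboloid, which is also the only setting the paper's proof really covers, since it rests on \cref{thm:curv:parametrization}. You start from the same lemma as the paper, but you control the error by a different mechanism.

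The paper never Taylor-expands $\cP$. For a tangent ray it keeps the exact relation $\v s(h)=h\big(I-2c(h)A\big)^{-1}\v e_1$ with $c(h)=\zeta-\v s(h)^TA\v s(h)=\zeta+\cO(h^2)$, and absorbs the whole correction into an effective offset $\tilde\zeta$. It then reads off $\mu_i=h/(1-\tilde\zeta\kappa_i)$, hence $\hat\kappa_i=(\tilde\zeta/\zeta)\kappa_i$. This gives the multiplicative form \eqref{eqn:curv:direct_rate} directly, uniformly in $i$ and even for $\kappa_i=0$. It does gloss over the fact that $c(h)$ differs from ray to ray, so a single common $\tilde\zeta$ is not literally exact.

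You instead use $\v s''(0)=\v 0$, treat the $\cO(h^3)$ remainder $E$ as unstructured, and apply Bauer--Fike. This is more honest about the eigenvalue step. However, it only yields the additive bound $\abs{\hat\kappa_i-\kappa_i}=\cO\big(h^2(1-\zeta\kappa_i)^2/\zeta\big)$, and, as you note, turning it into a relative bound costs a factor of order $\norm{A}/\abs{\lambda_i}$.

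The two views meet once you see that $E$ factors through $A$. Let $G=(I-2\zeta A)^{-1}$ and let $c_i(h)$ be the offset of the $i$-th ray. Then $WU^T-hI=2hA\sum_i c_i(h)\big(I-2c_i(h)A\big)^{-1}\v u_i\v u_i^T=A\,(2h\zeta G+F)$ with $\norm{F}=\cO(h^3)$. This is a relative $\cO(h^2/\zeta)$ perturbation of $2h\zeta AG$. Since $\zeta\hat\kappa_i=(\mu_i-h)/\mu_i$, a relative eigenvalue perturbation bound then gives \eqref{eqn:curv:direct_rate} without dividing by $\kappa_i$.

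One minor point on your perturbation step: Bauer--Fike only places the eigenvalues in discs. For clustered $\kappa_i$ you still need the usual continuity argument to get a one-to-one matching, or you can pass to the symmetric part of $WU^T$ and use Weyl's inequality. Also, the eigenvalues of $WU^T$ are real only up to $\cO(h^3)$.

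Your closing remark on third-order terms is correct and goes beyond the paper. The theorem is stated for $\cM$ under \cref{asm:curv:standing}, but the $h^2$ rate needs $\v s''(0)=\v 0$ for tangent rays, which is special to the paraboloid. By \eqref{eqn:curv:s_derivatives_third} with $\gamma=0$, a nonzero cubic form gives $\v s''(0)=6\zeta G\,C(\v s'(0),\v s'(0),\cdot)$. This shifts $\mu_i$ by $\cO(\zeta h^2\norm{C})$, and hence $\hat\kappa_i$ by $\cO(h\norm{C})$ for a generic fixed frame. So the paper's comment after the proof, that third-order terms play no role, holds only for the leading term. Central differences cancel this $h^2$ term, as you say. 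For a merely $C^2$ hypersurface the remainder is only $o(h)$.
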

\begin{proof}
Let $\tilde \zeta = \norm{\v x_0-\v y_0} + \cO(h^2) = \zeta + \cO(h^2)$. \Cref{thm:curv:parametrization} states that, under Frame--1, the tangential component of the induced curve $\alpha$ generated by the direction $\v e_1$ satisfies
$\alpha(h)\big|_{\cT_{\v y_0}\cM} = (I-2\tilde \zeta A)^{-1} h\v e_1$.
Under Frame--2, the corresponding expression for direction $\v v_i=Q_i\v e_1$ becomes
$\alpha(h)\big|_{\cT_{\v y_0}\cM} = (I-2\tilde \zeta \Lambda)^{-1} h\v v_i$.
Expression in Frame--3 can be transformed to Frame--2 via the orthogonal map $\cF$, so we obtain
\[
  P^T \v w_i = (I-2\tilde \zeta \Lambda)^{-1} hP^T\v u_i\implies \v w_i = P(I-2\tilde \zeta \Lambda)^{-1} h\,P^T\v u_i.
\]
Collecting $\{\v w_i\}_i$ and $\{\v u_i\}_i$ into $W$ and $U$, respectively, and enforcing $\{\v u_i\}_i$ to be an orthonormal basis yields
\begin{equation}\label{eqn:curv:key_alg2}
  WU^T = P(I-2\tilde \zeta \Lambda)^{-1} hP^T.
\end{equation}
\Cref{eqn:curv:key_alg2} gives $\mu_i = h/(1-2\tilde \zeta \lambda_i)$. Since the osculating paraboloid $z(\v s) = \v s^TD\v s$ is diagonalized in Frame--2, it follows that
\[
  \kappa_i = 2\lambda_i = \frac{1-h/\mu_i}{\tilde \zeta}\approx \frac{1-h/\mu_i}{\norm{\v x_0-\v y_0}} = \hat\kappa_i,
\]
since we can only compute $\hat\kappa_i$ using $\zeta=\norm{\v x_0-\v y_0}$. Therefore, $(\tilde \zeta/\zeta)\kappa_i=\hat\kappa_i \implies \hat\kappa_i = \big(1+\cO(h^2/\zeta)\big)\kappa_i$.
\end{proof}

By \eqref{eqn:curv:dP_hyper}, the identity \eqref{eqn:curv:key_alg2} says that $WU^T$ is $h\cD\cP(\v x_0)$ up to $\cO(h^2)$. Only first derivatives of $\cP$ enter, so the third-order terms of $\cM$ play no role here, and $\v x_0$ may lie on either side of $\cM$: no convexity is needed, and saddle points are treated like any other point, since $(I-\zeta S)^{-1}$ exists for every $\zeta<\iota$ whatever the signs of the principal curvatures.

To compute~\eqref{eqn:curv:extract_kappa}, one can construct $W$ and $U$ through closest point queries, then the eigenvalues $\{\mu_i\}_{i=1}^{d-1}$ of $WU^T$ are readily available via eigen-decomposition. The only caveat is that so far columns of $W$ and $U$ are intrinsically defined in $\cT_{\v y_0}\cM$. When the normal is not used, for instance because it is only approximate, the reconstruction must instead proceed directly in the ambient $\bbR^d$. Motivated by the robustness to the slopes of random rays shown in~\cref{thm:curv:error_alg1}, we examine how dispensing with the tangent projection affects the present analysis.

\subsection{Errors of direct extraction of principal curvatures without normal}\label{sec:curv:alg2_error}
With a slight abuse of notation, let $U,W\in\bbR^{d\times d}$ denote matrices defined analogously to those in~\cref{thm:curv:recon_direction}, but constructed without tangent projection so that $U$ collects $d$ orthonormal ray directions in $\bbR^d$. The following theorem provides a precise description of the resulting approximation error.

\begin{theorem}\label{thm:curv:error_alg2}
When non-tangent rays with step size $h$ are used, the principal curvatures $\{\hat\kappa_i\}_{i=1}^{d-1}$ reconstructed from the leading $d-1$ eigenvalues of $WU^T$ approximate the true principal curvatures $\{\kappa_i\}_{i=1}^{d-1}$ with relative error
\begin{equation}\label{eqn:curv:alg2_rate}
  \hat\kappa_i=\left(
    1+\cO\left(\frac{h}{\zeta}\sqrt{\frac{\ln(d)}{d}}\right)
  \right)\kappa_i,
\qquad i=1,\dots,d-1,
\end{equation}
with probability at least $1-1/d$, as $h\to0$.
\end{theorem}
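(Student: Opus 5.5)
The plan is to treat $WU^T$ as a small perturbation of $h\,\cD\cP(\v x_0)$ and track how the perturbation moves the $d-1$ nonzero eigenvalues. First, for each unit ray $\v r_i$ (the columns of $U$, orthonormal in $\bbR^d$), expand $\cP(\v x_0+h\v r_i)-\v y_0$ to second order in $h$ using \cref{thm:curv:parametrization}. Apply it in the frame aligned with $\v r_i$, with tangential part $\Proj_{\cT}\v r_i$ of length $\sqrt{1-r_{i,d}^2}$ and slope $\gamma_i=r_{i,d}/\sqrt{1-r_{i,d}^2}$. This gives
\[
\v w_i = h\,(I-2c_i A)^{-1}\Proj_{\cT}\v r_i + \cO(h^2)\,\v n,\qquad c_i=\zeta+\gamma_i h+\cO(h^2).
\]
Expanding $(I-2c_iA)^{-1}$ around $c_i=\zeta$ with $G=(I-2\zeta A)^{-1}$ yields
\[
\v w_i = hG\Proj_{\cT}\v r_i + 2h^2\gamma_i\,GAG\,\Proj_{\cT}\v r_i + \cO(h^2)\,\v n + \cO(h^3).
\]
Summing $\v w_i\v r_i^T$ over the orthonormal basis and using $\sum_i\v r_i\v r_i^T=I$ gives
\[
WU^T = hG\Proj_{\cT} + 2h^2\,GAG\,\Proj_{\cT}\,U\diag(\gamma_i)U^T + N + \cO(h^3),
\]
where $N$ collects the normal components and therefore has its range in $\operatorname{span}\{\v n\}$ with norm $\cO(h^2)$. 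The leading term is exactly $h\cD\cP(\v x_0)$ by \eqref{eqn:curv:dP_hyper}.

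Next, write this matrix in block form with respect to $\cT_{\v y_0}\cM\oplus\operatorname{span}\{\v n\}$.
\begin{itemize}
\item The tangential block is $hG+E$, with $E=2h^2GAG\,\Proj_{\cT}U\diag(\gamma_i)U^T\Proj_{\cT}$.
\item The $(\cT,\v n)$ block is $2h^2GAG\Proj_{\cT}U\diag(\gamma_i)U^T\v n$, of size $\cO(h^2\max|\gamma_i|)$.
\item The normal row and the normal diagonal entry are $\cO(h^2)$.
\end{itemize}
Since $hG$ is symmetric with eigenvalues $h/(1-\zeta\kappa_i)$, bounded away from $0$ on the scale $h$, a Schur-complement (or Bauer--Fike plus a two-scale separation) argument shows the following. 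The $d-1$ eigenvalues of size $\Theta(h)$ are the eigenvalues of $hG+E$ up to $\cO(h^3)$, because the off-diagonal blocks only enter through their product divided by a gap of order $h$. The remaining eigenvalue is $\cO(h^2)$, which is why we keep the leading $d-1$ eigenvalues. By Bauer--Fike with an orthogonal eigenbasis of $hG$, each leading eigenvalue satisfies $\mu_i = h/(1-\zeta\kappa_i)+\delta_i$ with $|\delta_i|\le\norm{E}\le 2h^2\norm{GAG}\max_i|\gamma_i|$, using $\norm{U\diag(\gamma_i)U^T}=\max_i|\gamma_i|$.

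Then propagate $\delta_i$ through \eqref{eqn:curv:extract_kappa}. From $1-h/\mu_i=\zeta\kappa_i + \cO(h\delta_i/\mu_i^2)$, we get $\hat\kappa_i-\kappa_i=\cO(\delta_i(1-\zeta\kappa_i)^2/(h\zeta))=\cO(h\max|\gamma_i|\,\norm{GAG}(1-\zeta\kappa_i)^2/\zeta)$. The eigenvalues of $GAG$ are $\lambda_j/(1-2\zeta\lambda_j)^2$, so the factor $\norm{GAG}(1-\zeta\kappa_i)^2$ is bounded by a constant times $\max_j|\kappa_j|$ times a ratio of the $(1-\zeta\kappa_j)$ factors, which is bounded within the reach. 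This yields a relative error $\cO(h\max|\gamma_i|/\zeta)$, provided we divide by $|\kappa_i|$ assuming the curvatures are nondegenerate, as in \cref{thm:curv:error_alg1}. As a sanity check, for a single ray along a principal direction this reproduces exactly $\hat\kappa=\kappa(1+\gamma h/\zeta)$. Finally, invoke \eqref{eqn:curv:slope_bound} with $M=d$: with probability at least $1-1/d$, all $|\gamma_i|\le 2\sqrt{\ln(2d^2)/(d-2)}=\cO(\sqrt{\ln d/d})$, which gives \eqref{eqn:curv:alg2_rate}.

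The main obstacle is the eigenvalue perturbation step, because $WU^T$ is not symmetric and contains the extra near-zero eigenvalue coming from the normal direction. I would first handle this with the explicit block decomposition above and a Schur-complement bound, so that the normal coupling only contributes $\cO(h^3)$ and the gap between the $\Theta(h)$ cluster and the $\cO(h^2)$ eigenvalue is used. A secondary care point is that the diagonal of random $\gamma_i$ is conjugated by $U$; bounding it crudely by $\max|\gamma_i|$ gives exactly the stated rate, so no finer concentration is needed.
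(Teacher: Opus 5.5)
Your proposal is correct and follows essentially the same route as the paper: expand each displacement with \cref{thm:curv:parametrization}, collapse the sum over rays using their orthonormality, separate the $\cO(h^2)$ eigenvalue coming from the normal direction, apply Bauer--Fike to the $2h^2GAG\Theta$ perturbation with $\norm{\Theta}\le\max_j\abs{\gamma_j}$, and finish with \eqref{eqn:curv:slope_bound} at $M=d$. Two of your steps are slightly more careful versions of the paper's argument: the Schur-complement treatment of the nonzero $\cO(h^2\max_j\abs{\gamma_j})$ tangent--normal block, which the paper drops before its Laplace expansion, and the explicit nondegeneracy assumption, which makes visible the dependence on $\max_j\abs{\kappa_j}/\abs{\kappa_i}$ that the paper's ``effective offset'' $\zeta_i$ leaves inside the $\cO$.
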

\begin{proof}
The core idea is that, without tangent space projection, the quantities in the final entries of the columns in $U$ and $W$ remain small under Frame--2, resulting in the matrix product $WU^T$ being dominated by its leading $(d-1)\times(d-1)$ block, which contains the desired eigenvalues.
Let $\bar U=P^TU$ and $\bar W=P^TW\in\bbR^{d\times d}$ denote the Frame--2 representations of $U$ and $W$. Then
\[
  WU^T = P\bar W\bar U^TP^T = P\sum_{i=1}^d \bar{\v w}_i\bar{\v u}_i^TP^T,
\]
so analyzing $WU^T$ reduces to analyzing the outer products $\bar{\v w}_i\bar{\v u}_i^T$. By~\cref{thm:curv:parametrization},
\[
  \bar{\v u}_i = [\bar{\v v}_i,\,\gamma_i]^T, \quad
  \bar{\v w}_i = \Big[\big(I-2c_i(h)\Lambda\big)^{-1}h\bar{\v v}_i,\; \cO(h^2)\Big]^T,
\]
where $\Lambda$ is the diagonal matrix of half the principal curvatures at $\v y_0\in\cM$ and $c_i(h) = \zeta+\gamma_ih+\cO(h^2)$. Therefore,
\[\bar{\v w}_i\bar{\v u}_i^T=
  \begin{bmatrix}
    \big(I-2c_i(h)\Lambda\big)^{-1}h\bar{\v v}_i\bar{\v v}_i^T
    &
    \gamma_i\big(I-2c_i(h)\Lambda\big)^{-1}h\bar{\v v}_i
    \\
    \cO(h^2)\bar{\v v}_i^T
    &
    \gamma_i\cO(h^2)
  \end{bmatrix}.
\]
Since $U$ is orthogonal, so is $\bar U$, and hence $I_d = \bar U\bar U^T$ gives
$\sum_{i=1}^d \bar{\v v}_i\bar{\v v}_i^T = I_{d-1}$ and $\sum_{i=1}^d \gamma_i\bar{\v v}_i = \v 0_{d-1}$.
At first order in $h$, the offsets $c_i(h)$ can be replaced by $\zeta$, and therefore
\[
  \sum_{i=1}^d \bar{\v w}_i\bar{\v u}_i^T=
  \begin{bmatrix}
    \big(I-2\zeta \Lambda\big)^{-1}h &  \v 0_{d-1}
    \\
    \cO(dh^2)\v 1_{d-1}^T  &  \cO\big(\ln(d)h^2\big)
  \end{bmatrix},
\]
where the bound on the last entry uses~\cref{thm:curv:concentration_sphere}. By Laplace expansion, the characteristic polynomial factorizes as
\[
  \det\Big(\big(I-2\zeta \Lambda\big)^{-1}h - \mu I_{d-1}\Big) \Big(\cO\big(\ln(d)h^2\big)-\mu\Big)=0.
\]
Hence, the top $d-1$ eigenvalues of $\sum_{i=1}^d \bar{\v w}_i\bar{\v u}_i^T$ coincide with those of $\big(I-2\zeta \Lambda\big)^{-1}h$, while the smallest eigenvalue is of order $\cO\big(\ln(d)h^2\big)$, provided $h$ is sufficiently small. Finally, the slopes enter at the next order. Since $c_j(h) = \zeta+\gamma_jh+\cO(h^2)$, the top-left block is
\[(I-2\zeta \Lambda)^{-1}h+2h^2\Lambda(I-2\zeta \Lambda)^{-2}\Theta+\cO(h^3)\]
 with $\Theta\defeq\sum_j\gamma_j\bar{\v v}_j\bar{\v v}_j^T$, and $\norm\Theta\le\max_j\abs{\gamma_j}$ because $\sum_j\bar{\v v}_j\bar{\v v}_j^T = I_{d-1}$. By the Bauer--Fike theorem~\cite{bauer1960norms}, $\mu_i = h/(1-2\zeta_i\lambda_i)$ with an effective offset $\zeta_i = \zeta+\cO(\max_j\abs{\gamma_j}h)$, which plays the role of $\tilde\zeta$ in the proof of \cref{thm:curv:recon_direction}. So similarly, $\hat\kappa_i = (1-h/\mu_i)/\norm{\v x_0-\v y_0}$, and applying~\eqref{eqn:curv:slope_bound} with $M = d$, we conclude that, with probability at least $1-1/d$,
\[
  \hat\kappa_i = \frac{\zeta_i}{\zeta}\kappa_i = \frac{\zeta+\cO(\max_j\abs{\gamma_j}h)}{\zeta}\kappa_i \implies   \hat\kappa_i=\left(1+\cO\left(\frac{h}{\zeta}\sqrt{\frac{\ln(d)}{d}}\right)
  \right)\kappa_i.
\]
\end{proof}
Compared with the error bound in~\cref{thm:curv:recon_direction}, the use of non-tangent rays adds a relative error of order $(h/\zeta)\sqrt{\ln(d)/d}$, which is first order in $h$ but decreases as $d$ grows. We summarize the entire procedure in~\cref{alg:curv:recon_direction}.

\begin{algorithm}[htbp]
\caption{Compute principal curvatures via~\cref{thm:curv:recon_direction} without normal information}\label{alg:curv:recon_direction}
\begin{algorithmic}
  \STATE \textbf{Input:} $\cM$: a hypersurface in $\bbR^d$ satisfying \cref{asm:curv:standing}; $h$: step size used to move along each ray.
  \STATE Sample a random point $\v x_0$ in the ambient space $\bbR^d$. Find the closest point $\v y_0\defeq\cP\v x_0\in\cM$.
  \STATE Record $\zeta=\norm{\v x_0-\v y_0}$; generate orthonormal $\{\v u_i\}_{i=1}^{d}$ in $\bbR^d$.
  \FOR {$i=1:d$}
    \STATE Find the closest point $\v y_i\defeq\cP(\v x_0+h\v u_i)\in\cM$; record $\v w_i=\v y_i-\v y_0$.
  \ENDFOR
  \STATE Construct $W,U\in\bbR^{d\times d}$ whose columns are $\{\v w_i\}_{i=1}^d$ and $\{\v u_i\}_{i=1}^d$, respectively.
  \STATE Eigen-decompose $WU^T$ to obtain the top $d-1$ eigenvalues $\{\mu_i\}_{i=1}^{d-1}$; compute $\hat\kappa_i$ via~\eqref{eqn:curv:extract_kappa}.
\end{algorithmic}
\end{algorithm}

We next conduct numerical experiments to verify the convergence behavior predicted by~\cref{thm:curv:recon_direction} and~\cref{thm:curv:error_alg2}. Specifically, we apply~\cref{alg:curv:recon_direction} to the unit $2$-sphere embedded in $\bbR^3$, using $50$ random starting points $\v x_0$ and orthonormal frames of rays and varying key parameters. The results are shown in~\cref{fig:curv:sphere_surface_compare}.
\begin{figure}[htbp]
  \centering
  \includegraphics[width=0.9\textwidth]{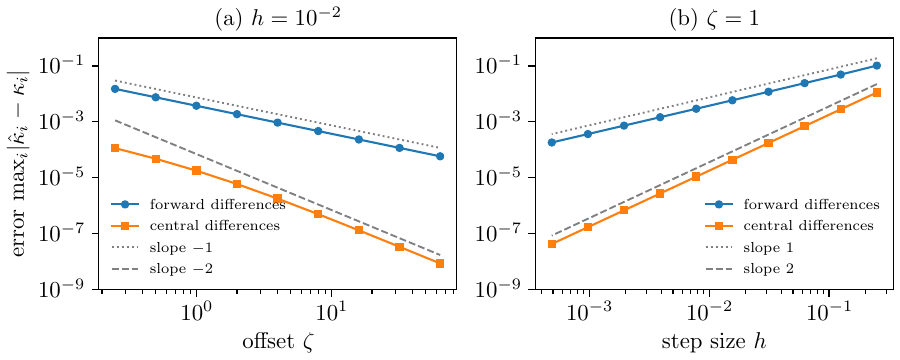}
  \caption{Performance of~\cref{alg:curv:recon_direction} on the unit $2$-sphere.
  Largest error of the two principal curvatures, median over $50$ random base points and orthonormal frames of rays, with the forward differences of \cref{alg:curv:recon_direction} and with central differences. (a) Against the offset $\zeta$ for $h = 10^{-2}$. (b) Against the step size $h$ for $\zeta = 1$.}
  \label{fig:curv:sphere_surface_compare}
\end{figure}

With the forward differences of \cref{alg:curv:recon_direction}, the error decreases like $h/\zeta$, the rate of \cref{thm:curv:error_alg2} (\cref{fig:curv:sphere_surface_compare}). With central differences, $\v w_i = \big(\cP(\v x_0+h\v u_i)-\cP(\v x_0-h\v u_i)\big)/2$, the slope term $\gamma_ih$ of the effective offset changes sign with $h$ and cancels, so the error decreases like $h^2$, as well as $\zeta^{-2}$ for large $\zeta$, at the price of $d$ more projections.

Finally, we emphasize that the number of directions used in~\cref{alg:curv:recon_direction}, $d$, scales linearly with the ambient dimension, representing a substantial reduction compared to the $\cO(d^2\ln d)$ rays required by estimating the second fundamental form in~\cref{sec:curv:recon_2nd}, thereby avoiding a large number of rays and a large least squares problem in high dimensions. Moreover, the reduction directly translates into fewer closest-point queries, leading to significant computational savings, particularly in high dimensional settings. Last but not least, we also avoid the $\cO(d^6\ln d)$ complexity of solving the linear system, requiring only $\cO(d^3)$ cost from an eigendecomposition and a matrix-matrix product between $\cO(d)\times\cO(d)$ matrices instead.

\subsection{Experiments in higher dimensions}\label{sec:curv:high_dim}
To test both algorithms beyond three dimensions, we use the hypersurface $\cM\subset\bbR^d$ given by the graph of
\begin{equation}\label{eqn:curv:test_function}
  f(\v s) = \frac12\v s^TD\v s+\frac14\norm{\v s}^4,
\end{equation}
where $D$ is a diagonal matrix with entries equally spaced in $[0.5,2]$. The Hessian of $f$ is $D+\norm{\v s}^2I+2\v s\v s^T$, so away from the origin the principal directions are not the coordinate directions and the third-order terms of $\cM$ do not vanish; at the origin they vanish, since $f$ is even. We compute the closest point projection exactly, up to rounding, using second-order Newton's method for $\min_{\v s}\norm{\v x-(\v s,f(\v s))}^2$, and the true principal curvatures using the derivatives of $f$. \Cref{fig:curv:high_dim} shows the results.
\begin{itemize}
  \item \Cref{alg:curv:recon_direction}: at five random base points along the level set $\norm{\v s} = 0.3$ and $h = 10^{-4}$ on both sides of the graph, the largest error over all $d-1$ principal curvatures stays below $2.1\times10^{-4}$ from $d = 8$ to $d = 128$, and it decreases slowly as $d$ grows (\cref{fig:curv:high_dim}(a)) since the slopes of the rays shrink like $1/\sqrt d$ (\cref{thm:curv:error_alg2}), and the errors due to the third-order terms average out over the $d$ ray directions. It decreases linearly in $h$ (\cref{fig:curv:high_dim}(b)). If one closest point projection costs $C_\cP$, the total cost of one estimate is $\cO(d\,C_\cP+d^3)$.

  \item \Cref{alg:curv:recon_2nd} with $d = 8$ and $M = 276$ rays, at the convex side of the origin, the error decreases like $\zeta^{-2}$ for large offset $\zeta$, as \cref{thm:curv:error_alg1} predicts. At a base point along the level set $\norm{\v s} = 0.3$, it stays between $0.05$ and $0.07$ for different values of $\zeta$, which shows the bias of \cref{thm:curv:third_order}; it grows slowly with $\zeta$ as the geodesic curvature of the induced curves approaches its limit $\kappa_g^\infty$. With the normal curvature computed according to \cref{rmk:curv:meusnier}, the error is below $2\times10^{-7}$ at both points and for every $\zeta$ (\cref{fig:curv:high_dim}(c)). Its rise at $\norm{\v s} = 0.3$ for $\zeta\ge16$ comes from the amplification of rounding errors: the second order finite difference divides them by $h^2$, while the normal curvature divides by $\norm{\alpha_i'(0)}^2$, which decreases like $\zeta^{-2}$. At the origin, where $\v y_0 = \v 0$ and the rounding errors of the projected points are much smaller, the rise does not occur.
\end{itemize}
Dense point clouds of these hypersurfaces are out of reach, since a point cloud with spacing $\varepsilon$ on a hypersurface in $\bbR^d$ needs of the order of $\varepsilon^{-(d-1)}$ points. Data in high dimensions concentrate instead near manifolds of low dimension and high codimension (\cref{sec:curv:intro}), which we treat next.

\begin{figure}[htbp]
  \centering
  \includegraphics[width=0.9\textwidth]{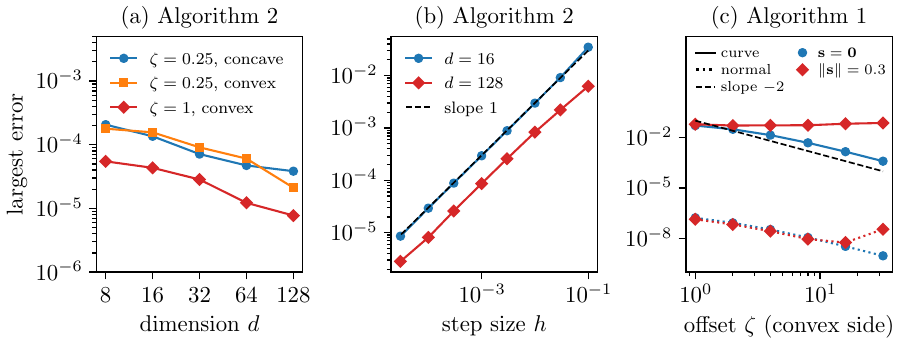}
  \caption{The two algorithms on the graph of \eqref{eqn:curv:test_function}, with exact closest point projections. (a) \Cref{alg:curv:recon_direction}: largest error over the $d-1$ principal curvatures at five random base points with $\norm{\v s} = 0.3$, step size $h = 10^{-4}$, offset $\zeta = 0.25$ on the concave side (all principal curvatures positive with respect to the normal toward $\v x_0$) and $\zeta = 0.25$, $1$ on the convex side. (b) The same error against the step size $h$ for $d = 16$ and $128$, $\zeta = 1$ on the convex side. (c) \Cref{alg:curv:recon_2nd} with $d = 8$, $M = 276$ rays and central differences with $h = 10^{-3}$, against the offset on the convex side, with the curvature of the induced curve (solid) and with the normal curvature of \cref{rmk:curv:meusnier} (dotted), at the origin, where the third-order terms vanish (blue), and at $\norm{\v s} = 0.3$ (red); dashed: slope $-2$, the rate predicted by \cref{thm:curv:error_alg1}.}
  \label{fig:curv:high_dim}
\end{figure}

\section{Submanifolds of higher codimension}\label{sec:curv:codim}

In this section, $\cM\subset\bbR^d$ is a compact $C^2$ submanifold of dimension $d_{\mathrm{in}}$ and codimension $d_{\mathrm{co}} = d-d_{\mathrm{in}}\ge1$ with reach $\iota>0$, and $\cP$ is its closest point projection. Compared with hypersurfaces, two things change. First, the curvature of $\cM$ is described by the shape operators $S_{\bs\nu}$ along the normal vectors $\bs\nu$. The shape operators are linear in $\bs\nu$, so the $d_{\mathrm{co}}$ shape operators for a basis of the $d_{\mathrm{co}}$-dimensional normal space $\cT^\perp_{\v y}\cM$ determine all of them (\cref{sec:curv:prelim_codim}), while the offset $\v x_0-\v y_0$ of a probing point gives only one normal direction, a combination of the basis vectors. Second, a random direction can be mostly normal: at a point $\v y\in\cM$, the part in the normal space $\cT^\perp_{\v y}\cM$ of a random ray with direction $\v r$ uniform on $\bbS^{d-1}$ has squared length $d_{\mathrm{co}}/d$ on average, so the slopes of \cref{sec:curv:concentration} are not small for large $d_{\mathrm{co}}$. M\"arz and Macdonald~\cite{marz2012calculus} extended the calculus of the closest point method to surfaces of any codimension using the structure of the closest point projection near the surface, and they construct the closest point functions for surfaces of higher codimension one codimension at a time. We follow their ideas: we first describe the projection near $\cM$, and then we treat the normal directions one at a time.

\subsection{The closest point projection near a submanifold}\label{sec:curv:codim_projection}
The properties of \cref{sec:curv:cp_property} hold in any codimension, with the same proofs: in \cref{thm:curv:cp_normal}, the normal line becomes the normal space, and the level-set function $F$ becomes a map $F:\bbR^d\to\bbR^{d_{\mathrm{co}}}$ whose $d_{\mathrm{co}}$ components define $\cM$ near $\cP\v x$, with a multiplier $\bs\lambda\in\bbR^{d_{\mathrm{co}}}$ in the Lagrangian. The next proposition extends the derivative \eqref{eqn:curv:dP_hyper} of $\cP$ to codimension $d_{\mathrm{co}}$. We prove it in the coordinates that M\"arz and Macdonald use to show that the Euclidean closest point function satisfies their principles~\cite[proof of Thm.~3.8]{marz2012calculus}; the formula for $\cD\cP$ in (iii) is also Theorem~C of~\cite{leobacher2021existence}.

\begin{proposition}[Closest point projection in codimension $d_{\mathrm{co}}$]\label{thm:curv:dP_codim}
Let $\v y\in\cM$, let $\bs\nu\in\cT^\perp_{\v y}\cM$ be any normal vector with $\norm{\bs\nu}<\iota$, and let $\v x = \v y+\bs\nu$. Then:
\begin{enumerate}[label=(\roman*)]
  \item $\cP\v x = \v y$. Since $\bs\nu$ is arbitrary, the points of the open $\iota$-tubular neighborhood of $\cM$ with closest point being $\v y$ form the normal disk $\v y+\{\bs\nu\in\cT^\perp_{\v y}\cM:\norm{\bs\nu}<\iota\}$;
  \item the eigenvalues of $S_{\bs\nu}$ lie in $[-\norm{\bs\nu}/\iota,\,\norm{\bs\nu}/\iota]$;
  \item $\cP$ is differentiable on the whole open tubular neighborhood, and at $\v x = \v y+\bs\nu$
  \begin{equation}\label{eqn:curv:dP_codim}
    \cD\cP(\v x) = (I-S_{\bs\nu})^{-1}\Proj_{\cT_{\v y}\cM},
  \end{equation}
  where the inverse is taken on $\cT_{\v y}\cM$. In particular, $\cD\cP(\v x)$ is symmetric and vanishes on $\cT^\perp_{\v y}\cM$. When $\bs\nu = \v 0$, $\cD\cP(\v y) = \Proj_{\cT_{\v y}\cM}$.
\end{enumerate}
\end{proposition}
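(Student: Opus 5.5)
We prove the three parts in order, using Federer's inequality \eqref{eqn:curv:federer} together with the quadratic growth of \cref{thm:curv:quad_growth} for (i) and (ii), and the implicit function theorem in graph coordinates for (iii).

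For (i), let $\zeta=\norm{\bs\nu}<\iota$ and take any $\v z\in\cM$. Since $\bs\nu$ is normal at $\v y$, \eqref{eqn:curv:federer} gives $\inpd{\bs\nu}{\v z-\v y}\le\zeta\norm{\v z-\v y}^2/(2\iota)$. Expanding $\norm{\v x-\v z}^2$ exactly as in the proof of \cref{thm:curv:quad_growth} yields
\[
\norm{\v x-\v z}^2\ge\zeta^2+\big(1-\zeta/\iota\big)\norm{\v z-\v y}^2 .
\]
The right side exceeds $\zeta^2$ unless $\v z=\v y$. Hence $\v y$ is the unique closest point, so $\cP\v x=\v y$. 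Conversely, if $\cP\v x=\v y$, then $\v x-\v y$ is normal at $\v y$ by the codimension version of \cref{thm:curv:cp_normal}, and it has length less than $\iota$. This identifies the fibre with the normal disk.

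For (ii), work in the graph coordinates of \cref{sec:curv:prelim_codim}, with $\v z=f(\v s)=\tfrac12\sff_{\v 0}(\v s,\v s)+o(\norm{\v s}^2)$. Take $\v z=(\v s,f(\v s))$ in Federer's inequality. Since $\inpd{\bs\nu}{\v z-\v y}=\inpd{\bs\nu}{f(\v s)}=\tfrac12\v s^TS_{\bs\nu}\v s+o(\norm{\v s}^2)$ and $\norm{\v z-\v y}^2=\norm{\v s}^2+o(\norm{\v s}^2)$, we get $\tfrac12\v s^TS_{\bs\nu}\v s\le\norm{\bs\nu}\norm{\v s}^2/(2\iota)+o(\norm{\v s}^2)$. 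Letting $\v s=t\v e$ with $t\to0$ bounds every eigenvalue above by $\norm{\bs\nu}/\iota$. Replacing $\bs\nu$ by $-\bs\nu$, which is also admissible, and using the linearity $S_{-\bs\nu}=-S_{\bs\nu}$ gives the lower bound.

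For (iii), parametrize points near $\v x$ as $\Phi(\v s,\bs\eta)=(\v s,f(\v s))+N(\v s)\bs\eta$, where $N(\v s)$ is a $C^1$ orthonormal normal frame, for instance obtained by Gram--Schmidt on the normals $\nabla F_j$. By (i) and the fibre description, $\cP(\Phi(\v s,\bs\eta))=(\v s,f(\v s))$ whenever $\norm{\bs\eta}<\iota$. We compute $\cD\Phi$ at $(\v 0,\bs\eta_0)$, where $N(\v 0)\bs\eta_0=\bs\nu$. In the $\bs\eta$ directions it is $N(\v 0)$, which spans the normal space. In a tangent direction $\v v$ it equals $\v v+\cD_{\v v}N\,\bs\eta_0$, whose tangential part is $\v v-S_{\bs\nu}\v v$ by \eqref{eqn:curv:shape_operator_codim}, plus a normal part. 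So $\cD\Phi$ is block triangular with respect to the splitting $\cT\oplus\cT^\perp$, with diagonal blocks $I-S_{\bs\nu}$ and $I$. By (ii) with $\norm{\bs\nu}<\iota$, the eigenvalues of $S_{\bs\nu}$ lie in $(-1,1)$, so $\cD\Phi$ is invertible.

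The inverse function theorem then gives a local $C^1$ inverse of $\Phi$. The projection factors as $\cP=(\v s\mapsto(\v s,f(\v s)))\circ\pi_{\v s}\circ\Phi^{-1}$, where $\pi_{\v s}$ extracts the $\v s$-coordinate. This shows that $\cP$ is differentiable. Differentiating at $\v x$, the derivative of $\pi_{\v s}\circ\Phi^{-1}$ kills normal vectors modulo the triangular correction, and on tangential inputs it inverts $I-S_{\bs\nu}$. Composing with the graph map, whose derivative at $\v 0$ is the inclusion of the tangent space, gives \eqref{eqn:curv:dP_codim}. Symmetry follows because $S_{\bs\nu}$ is self-adjoint on $\cT_{\v y}\cM$ and commutes with the projection onto it.

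The main obstacle is the bookkeeping of the off-diagonal block in $\cD\Phi^{-1}$. We must verify that normal inputs map to zero in the $\v s$-component. This holds because the block triangular structure places the coupling only in the normal output, which lets us read off the $\v s$-component directly.
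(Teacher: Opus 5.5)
Your proposal is correct and follows essentially the same route as the paper. Part (ii) uses Federer's inequality together with the second-order expansion, your graph chart playing the role of the paper's unit-speed curve and Meusnier's theorem; part (iii) uses the same normal-coordinate map $\Phi$, its block-triangular derivative, the inverse function theorem and the factorization $\cP=\chi\circ\pi\circ\Phi^{-1}$. The only real difference is in (i), where you derive that $\v y$ is the unique closest point directly from \eqref{eqn:curv:federer} via the quadratic-growth expansion, instead of citing Federer's Theorem~4.8(12), which makes that step self-contained.
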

\begin{proof}
(i) If $\bs\nu$ is normal at $\v y$ and $\norm{\bs\nu}<\iota$, then $\cP(\v y+\bs\nu) = \v y$~\cite[Thm.~4.8(12)]{federer1959curvature}. Conversely, $\v x-\cP\v x$ is normal at $\cP\v x$ by \cref{thm:curv:cp_normal}.

(ii) Let $\v v\in\cT_{\v y}\cM$ be a unit vector and $\alpha$ a unit-speed curve on $\cM$ with $\alpha(0) = \v y$ and $\alpha'(0) = \v v$. As in the proof of \cref{thm:curv:quad_growth},
\[\inpd{\bs\nu}{\alpha(s)-\v y}\le\frac{\norm{\bs\nu}\norm{\alpha(s)-\v y}^2}{2\iota}\le\frac{\norm{\bs\nu}s^2}{2\iota}.\]
 Since $\alpha(s)-\v y = s\v v+\frac{s^2}{2}\alpha''(0)+o(s^2)$, $\inpd{\bs\nu}{\v v} = 0$, and $\inpd{\bs\nu}{\alpha''(0)} = \v v^TS_{\bs\nu}\v v$ by \eqref{eqn:curv:meusnier}, the left-hand side is $\frac{s^2}{2}\v v^TS_{\bs\nu}\v v+o(s^2)$. Dividing by $s^2/2$ and letting $s\to0$ gives $\v v^TS_{\bs\nu}\v v\le\norm{\bs\nu}/\iota$. The same argument with $-\bs\nu$ gives the lower bound.

(iii) Let $\chi:U\subset\bbR^{d_{\mathrm{in}}}\to\cM$ be a local $C^2$ parametrization with $\chi(\bs\theta_0) = \v y$, and let $\bs\nu_1,\dots,\bs\nu_{d_{\mathrm{co}}}$ be a $C^1$ orthonormal frame of the normal spaces near $\v y$. As in~\cite{marz2012calculus}, set
\[
  \Phi(\bs\theta,\v t)\defeq\chi(\bs\theta)+\sum_{j=1}^{d_{\mathrm{co}}}t_j\bs\nu_j\big(\chi(\bs\theta)\big),
\]
with $\bs\theta$ being the local coordinates on $\cM$ and $\v t$ being the coordinates in the normal space, and choose $\v t_0$ with $\bs\nu = \sum_jt_{0,j}\bs\nu_j(\v y)$, so that $\Phi(\bs\theta_0,\v t_0) = \v x$. By (i), $\cP(\Phi(\bs\theta,\v t)) = \chi(\bs\theta)$ whenever $\norm{\v t}<\iota$. The derivative $\cD\Phi(\bs\theta_0,\v t_0)$ of $\Phi$ at $(\bs\theta_0,\v t_0)$ maps $\partial/\partial t_j$ to $\bs\nu_j(\v y)$, and $\partial/\partial\theta_l$ to $\v v_l+\sum_jt_{0,j}\cD_{\v v_l}\bs\nu_j$, where $\v v_l = \partial_l\chi(\bs\theta_0)$.

By the definition of the shape operator, the tangential part of $\sum_jt_{0,j}\cD_{\v v_l}\bs\nu_j$ is $-S_{\bs\nu}\v v_l$. So, writing $\partial_{\theta_l}\Phi = (I-S_{\bs\nu})\v v_l+\v z_l$ with $\v z_l$ normal, and since the columns $\partial_{t_j}\Phi = \bs\nu_j(\v y)$ have no tangential part, in the decomposition $\bbR^d = \cT_{\v y}\cM\oplus\cT^\perp_{\v y}\cM$
\[
  \cD\Phi(\bs\theta_0,\v t_0) = \begin{bmatrix}(I-S_{\bs\nu})V & 0\\ Z & N\end{bmatrix},
\]
with $V = [\v v_1,\dots,\v v_{d_{\mathrm{in}}}]$, $Z = [\v z_1,\dots,\v z_{d_{\mathrm{in}}}]$ and $N = [\bs\nu_1(\v y),\dots,\bs\nu_{d_{\mathrm{co}}}(\v y)]$. This matrix is block triangular with the diagonal blocks $(I-S_{\bs\nu})V$ and $N$, both invertible by (ii). Hence, by the inverse function theorem, $\Phi$ is a $C^1$ diffeomorphism near $(\bs\theta_0,\v t_0)$, and near $\v x$ we have $\cP = \chi\circ\pi\circ\Phi^{-1}$ with $\pi(\bs\theta,\v t) = \bs\theta$, so $\cP$ is differentiable at $\v x$. By the chain rule, differentiating $\cP\circ\Phi = \chi\circ\pi$ at $(\bs\theta_0,\v t_0)$ gives
\[
  \cD\cP(\v x)\,\partial_{t_j}\Phi = \partial_{t_j}(\chi\circ\pi) = \v 0,\qquad \cD\cP(\v x)\,\partial_{\theta_l}\Phi = \partial_{\theta_l}\chi = \v v_l,
\]
that is, $\cD\cP(\v x)\bs\nu_j(\v y) = \v 0$ and $\cD\cP(\v x)\big((I-S_{\bs\nu})\v v_l+\v z_l\big) = \v v_l$, where $\cD\cP(\v x)\v z_l = \v 0$ because $\v z_l$ is normal. Thus $\cD\cP(\v x)(I-S_{\bs\nu})\v v = \v v$ for all $\v v\in\cT_{\v y}\cM$, and $\cD\cP(\v x)$ vanishes on the normal space. Since $S_{\bs\nu}$ is self-adjoint, so is $\cD\cP(\v x)$.
\end{proof}

For a hypersurface, $\bs\nu = \zeta\v n$ and $S_{\bs\nu} = \zeta S$, and (iii) is \eqref{eqn:curv:dP_hyper}. By (i), the preimages of the Euclidean projection $\cP$ are flat normal disks. We use this structure in two ways: to see what a probing point measures, and to move a probing point inside the preimage of a fixed closest point.

\subsection{Moving along a ray}\label{sec:curv:codim_probe}
The first-order formula of \cref{thm:curv:dP_codim} needs only $C^2$. To bound the remainders by $\cO(h^2)$, we let $\cM$ be of class $C^3$ from now on, so that $\cP$ is $C^2$ within the reach~\cite{marz2012calculus,leobacher2021existence}; for a $C^2$ submanifold, the remainders are only $o(h)$. For a probing point $\v x_0 = \v y_0+\zeta\v n$, where $\v n$ is a unit normal at $\v y_0$ and $\zeta<\iota$, and a ray in the direction $\v r$, \cref{thm:curv:dP_codim} gives
\begin{equation}\label{eqn:curv:codim_ray}
  \cP(\v x_0+h\v r) = \v y_0+h\,(I-\zeta S_{\v n})^{-1}\Proj_{\cT_{\v y_0}\cM}\v r+\cO(h^2).
\end{equation}
The displacement depends on the curvature of $\cM$ only through $S_{\v n}$, the shape operator in the direction of the offset. The shape operators in the remaining normal directions orthogonal to $\v n$ do not affect the first order. Write $\v r = \v r_T+\v g$ with $\v r_T$ tangent and $\v g$ normal at $\v y_0$, so that one step of size $h$ along the ray moves the probing point by $h\v r_T$ in the tangent directions and by $h\v g$ in the normal directions at the same time, as $\ell(h)$ in \eqref{eqn:curv:l_h} does for a hypersurface. Moving along the normal only takes the probing point inside the normal disk of $\v y_0$, from $\v y_0+\zeta\v n$ to $\v y_0+\zeta\v n+h\v g$, which does not move the closest point. The closest point moves only with the tangential move $h\v r_T$: applying \cref{thm:curv:dP_codim} at $\v y_0+\zeta\v n+h\v g$ gives
\begin{equation}\label{eqn:curv:codim_ray_normal}
  \cP(\v x_0+h\v r) = \v y_0+h\,(I-\zeta S_{\v n}-hS_{\v g})^{-1}\v r_T+\cO(h^2) = \v y_0+h\,(I-\zeta S_{\v n})^{-1}\v r_T+\cO(h^2).
\end{equation}
So the normal part of the ray changes the offset vector by $h\v g$, and with it the shape operator by $hS_{\v g}$, both of the order of the step size $h$, which affects the displacement only at second order. For a hypersurface, $\v g = \gamma\v n$, and this is the change of the offset from $\zeta$ to $c(h) = \zeta+\gamma h+\cO(h^2)$ in \cref{thm:curv:parametrization}. Therefore the large normal parts of random rays in high codimension do not affect the first-order displacement.

\subsection{One normal direction at a time}\label{sec:curv:codim_alg}
By \eqref{eqn:curv:codim_ray}, one probing point gives the shape operator along its offset $\v n$. We take $\bs\nu_1 = \v n$ and complete it to an orthonormal basis $\bs\nu_1,\dots,\bs\nu_{d_{\mathrm{co}}}$ of the normal space; as noted at the start of this section, $S_{\bs\nu_1},\dots,S_{\bs\nu_{d_{\mathrm{co}}}}$ determine the second fundamental form. By \cref{thm:curv:dP_codim}(i), the probing points $\v y_0+\zeta\bs\nu_j$, $j = 1,\dots,d_{\mathrm{co}}$, all have the closest point $\v y_0$. So we can treat the normal directions one at a time, in the same way as M\"arz and Macdonald deal with a surface of higher codimension one codimension at a time~\cite[Sec.~5.2]{marz2012calculus} (see also \cref{rmk:curv:flag}). The first probing point is the sampled point $\v x_0$, and its rays also estimate the tangent space: by \eqref{eqn:curv:codim_ray}, the matrix $WU^T$ of \cref{sec:curv:recon_direction} has rank $d_{\mathrm{in}}$ up to $\cO(h^2)$, and its range is $\cT_{\v y_0}\cM$. The other probing points are placed on the normal disk of $\v y_0$, and their rays are taken along the tangent space. \Cref{alg:curv:codim} summarizes the procedure.

\begin{algorithm}[htbp]
\caption{Second fundamental form of a submanifold of codimension $d_{\mathrm{co}}$}\label{alg:curv:codim}
\begin{algorithmic}
  \STATE \textbf{Input:} $\cM$: a compact $C^2$ submanifold of dimension $d_{\mathrm{in}}$ in $\bbR^d$ with reach $\iota>0$; $h$: the step size. All probing points $\v x_0,\v x_j$ lie within distance $\iota$ of $\cM$.
  \STATE
  \STATE Sample $\v x_0$; find $\v y_0 = \cP\v x_0$; record $\zeta = \norm{\v x_0-\v y_0}$ and $\bs\nu_1 = (\v x_0-\v y_0)/\zeta$.
  \STATE Generate an orthonormal basis $\{\v u_i\}_{i=1}^d$ of $\bbR^d$.
  \STATE Compute $\v w_i = \cP(\v x_0+h\v u_i)-\v y_0$.
  \STATE Eigen-decompose $B = \frac12(WU^T+UW^T)$ and obtain the $d_{\mathrm{in}}$ eigenpairs $\{(\mu_i, \v q_i)\}_{i=1}^{d_{\mathrm{in}}}$ with the eigenvalues of largest modulus.
  \STATE Set $Q = [\v q_1,\dots,\v q_{d_{\mathrm{in}}}]$; its columns form a basis of the estimated tangent space $\hat T$.
  \STATE $\hat S_1 := \diag\big((1-h/\mu_i)/\zeta\big)$ collects the estimated principal curvatures along $\bs\nu_1$.
  \STATE Complete $\bs\nu_1$ to an orthonormal basis $\bs\nu_1,\dots,\bs\nu_{d_{\mathrm{co}}}$ of $\hat T^\perp$.
  \FOR{$j = 2:d_{\mathrm{co}}$}
    \STATE Set $\v x_j = \v y_0+\zeta\bs\nu_j$; find $\v y_j = \cP\v x_j$ and $\zeta_j = \norm{\v x_j-\v y_j}$.
    \STATE Compute $W_j = [\cP(\v x_j+h\v q_i)-\v y_j]_{i=1}^{d_{\mathrm{in}}}$, $B_j = \frac12(Q^TW_j+W_j^TQ)$ and $\hat S_j = (I-hB_j^{-1})/\zeta_j$.
  \ENDFOR
  \STATE \textbf{Output:} $\hat\sff(\v v,\v w) = \sum_{j=1}^{d_{\mathrm{co}}}\inpd{\hat S_jQ^T\v v}{Q^T\v w}\bs\nu_j$ for $\v v,\v w\in\hat T$.
\end{algorithmic}
\end{algorithm}

\begin{theorem}\label{thm:curv:codim}
Let $\cM$ be of class $C^3$ (as in \cref{sec:curv:codim_probe}; the algorithm itself only needs $C^2$), let $\v x_0 = \v y_0+\zeta\bs\nu_1$ with $0<\zeta<\iota$, and let $\kappa_1(\bs\nu),\dots,\kappa_{d_{\mathrm{in}}}(\bs\nu)$ be the eigenvalues of $S_{\bs\nu}$. Then, as $h\to0$, the quantities of \cref{alg:curv:codim} satisfy:
\begin{enumerate}[label=(\roman*)]
  \item $B = h(I-\zeta S_{\bs\nu_1})^{-1}\Proj_{\cT_{\v y_0}\cM}+\cO(h^2)$; its $d_{\mathrm{in}}$ eigenvalues of largest modulus are $\mu_i = h/(1-\zeta\kappa_i(\bs\nu_1))+\cO(h^2)$, and the other $d_{\mathrm{co}}$ eigenvalues are $\cO(h^2)$;
  \item the estimated principal curvatures along $\bs\nu_1$ satisfy
  \begin{equation}\label{eqn:curv:codim_rate_first}
    \frac{1-h/\mu_i}{\zeta} = \kappa_i(\bs\nu_1)+\cO(h/\zeta),
  \end{equation}
  and $\norm{QQ^T-\Proj_{\cT_{\v y_0}\cM}} = \cO(h)$;
  \item for $j\ge2$, $\norm{\v y_j-\v y_0} = \cO(\zeta h)$ and
  \begin{equation}\label{eqn:curv:codim_rate_other}
    \hat S_j = Q^TS_{\bs\nu_j}Q+\cO(h/\zeta)+\cO(h).
  \end{equation}
\end{enumerate}
The constants in the $\cO$-terms depend on $\iota-\zeta$ and on bounds for the first two derivatives of $\cP$ near the probing points.
\end{theorem}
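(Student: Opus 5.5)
The plan is to reduce everything to the first-order formula of \cref{thm:curv:dP_codim}(iii) plus a second-order Taylor remainder, which is available because $\cM\in C^3$ makes $\cP$ of class $C^2$ within the reach. For (i), I expand $\v w_i=\cP(\v x_0+h\v u_i)-\v y_0=h\,\cD\cP(\v x_0)\v u_i+\cO(h^2)$. Since $U$ is orthogonal, this gives $WU^T=h\,\cD\cP(\v x_0)+\cO(h^2)$, where the constant is a bound on $\cD^2\cP$ near $\v x_0$. By \eqref{eqn:curv:dP_codim} with $\bs\nu=\zeta\bs\nu_1$, $\cD\cP(\v x_0)=(I-\zeta S_{\bs\nu_1})^{-1}\Proj_{\cT_{\v y_0}\cM}$ is symmetric, so symmetrizing changes nothing at first order, and $B=h(I-\zeta S_{\bs\nu_1})^{-1}\Proj_{\cT_{\v y_0}\cM}+\cO(h^2)$.

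The eigenvalues of the leading term are $h/(1-\zeta\kappa_i(\bs\nu_1))$ on $\cT_{\v y_0}\cM$ and $0$ on the normal space. By \cref{thm:curv:dP_codim}(ii), $\abs{\zeta\kappa_i}\le\zeta/\iota<1$, so these tangential eigenvalues are at least $h\,\iota/(\iota+\zeta)$. Weyl's inequality then gives the eigenvalue statements. The same lower bound shows that, for small $h$, the gap between the two groups is of order $h$, which is large compared with the $\cO(h^2)$ perturbation. Hence taking the $d_{\mathrm{in}}$ eigenvalues of largest modulus selects exactly the tangential ones.

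For (ii), I write $h/\mu_i=(1-\zeta\kappa_i)(1+\cO(h))=1-\zeta\kappa_i+\cO(h)$, using that $1-\zeta\kappa_i$ is bounded above and below in terms of $\iota-\zeta$. Dividing by $\zeta$ gives \eqref{eqn:curv:codim_rate_first}. For $QQ^T$, I apply the Davis--Kahan $\sin\Theta$ theorem to the symmetric matrix $B/h$: its perturbation is $\cO(h)$ and its spectral gap is at least $\iota/(\iota+\zeta)$. This yields $\norm{QQ^T-\Proj_{\cT_{\v y_0}\cM}}=\cO(h)$.

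Part (iii) is where I expect the real work, which is bookkeeping of two $\cO(h)$ defects. First, $\bs\nu_j$ is orthogonal to $\hat T$ but not exactly normal at $\v y_0$. By (ii), $\bs\nu_j=\bs\nu_j^\perp+\v t_j$ with $\bs\nu_j^\perp\in\cT^\perp_{\v y_0}\cM$ and $\norm{\v t_j}=\cO(h)$. By \cref{thm:curv:dP_codim}(i), $\cP(\v y_0+\zeta\bs\nu_j^\perp)=\v y_0$. Since $\cP$ is Lipschitz near the probing points, this gives $\norm{\v y_j-\v y_0}\le C\zeta\norm{\v t_j}=\cO(\zeta h)$. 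It also gives $\zeta_j=\zeta+\cO(\zeta h)$, and the unit offset $\bs\nu_j'=(\v x_j-\v y_j)/\zeta_j$ lies within $\cO(h)$ of $\bs\nu_j$.

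Second, the base point has moved to $\v y_j$. I apply the Taylor expansion and \eqref{eqn:curv:dP_codim} at $\v x_j$ to get $W_j=h(I-\zeta_jS_{\bs\nu_j'})^{-1}\Proj_{\cT_{\v y_j}\cM}Q+\cO(h^2)$. Because the tangent spaces and shape operators depend $C^1$ on the base point, and by the linearity of $S$ in $\bs\nu$, I can replace $\cT_{\v y_j}\cM$ by $\cT_{\v y_0}\cM$ and $S_{\bs\nu_j'}$ at $\v y_j$ by $S_{\bs\nu_j}$ at $\v y_0$. These replacements cost $\cO(\zeta h)$ and $\cO(h)$ inside the inverse, respectively, and so only $\cO(h^2)$ in $W_j$. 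Since $Q$ is $\cO(h)$-close to an orthonormal basis of $\cT_{\v y_0}\cM$, I obtain $B_j=h\,Q^T(I-\zeta_jS_{\bs\nu_j})^{-1}Q+\cO(h^2)$, which is invertible with inverse of size $h^{-1}$. The inverse satisfies $hB_j^{-1}=\big(Q^T(I-\zeta_jS_{\bs\nu_j})^{-1}Q\big)^{-1}+\cO(h)=I-\zeta_jQ^TS_{\bs\nu_j}Q+\cO(h)$, where the last step again uses that $Q^TQ=I+\cO(h)$ holds with range $\cO(h)$-close to the tangent space.

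Therefore $\hat S_j=(I-hB_j^{-1})/\zeta_j=Q^TS_{\bs\nu_j}Q+\cO(h)/\zeta_j$. Since $\zeta_j\asymp\zeta$, this last term is $\cO(h/\zeta)$, and the shift error in the shape operator accounts for the additional $\cO(h)$ in \eqref{eqn:curv:codim_rate_other}.
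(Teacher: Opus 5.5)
Your proposal is correct and follows the paper's proof essentially step for step. The shared ingredients are a second-order Taylor expansion of $\cP$ combined with the formula for $\cD\cP$, Weyl's inequality and Davis--Kahan with an $\cO(h)$ gap for (i)--(ii), and for (iii) the Lipschitz continuity of $\cP$ across the normal disk of $\v y_0$, followed by re-applying the derivative formula at $\v x_j$ and inverting. One slip is cosmetic: $Q^TQ=I$ holds exactly, since $Q$ consists of orthonormal eigenvectors of the symmetric $B$; what your inversion step actually uses is that the range of $Q$ is $\cO(h)$-close to the tangent space, which is invariant under $(I-\zeta_jS_{\bs\nu_j})^{-1}$.
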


\begin{proof}
\begin{enumerate}[label=(\roman*)]
  \item Since $\cP$ is $C^2$, $\v w_i = h\cD\cP(\v x_0)\v u_i+\cO(h^2)$, and since $U$ is orthogonal, $WU^T = h\cD\cP(\v x_0)+\cO(h^2)$. By \cref{thm:curv:dP_codim}, $\cD\cP(\v x_0)$ is symmetric, with the eigenvalues
  \[\frac{1}{1-\zeta\kappa_i(\bs\nu_1)}\in\Big[\frac{1}{1+\zeta/\iota},\,\frac{1}{1-\zeta/\iota}\Big]\]
  on the tangent space and $0$ on the normal space. So $B$ differs from $h\cD\cP(\v x_0)$ by $\cO(h^2)$, and Weyl's inequality gives the eigenvalues.
  \item By (i), $h/\mu_i = (1-\zeta\kappa_i)(1+\cO(h))$, and $\abs{\zeta\kappa_i}<1$ gives the first claim. The eigenvalues of $h\cD\cP(\v x_0)$ on the tangent space are separated from $0$ by at least $h/(1+\zeta/\iota)$, so by the Davis--Kahan theorem~\cite{davis1970rotation} the distance from the span of the corresponding eigenvectors of $B$ to $\cT_{\v y_0}\cM$ is
  \[\cO(h^2)/(h/(1+\zeta/\iota)) = \cO(h).\]
  \item In \cref{alg:curv:codim}, $\bs\nu_j$ is a unit vector orthogonal to $\hat T$, and $\hat T$ is within $\cO(h)$ of $\cT_{\v y_0}\cM$ by (ii). So $\bs\nu_j = \bs\nu_j'+\cO(h)$, where $\bs\nu_j'$ is the normalized projection of $\bs\nu_j$ onto the normal space $\cT^\perp_{\v y_0}\cM$. By \cref{thm:curv:dP_codim}(i), $\cP(\v y_0+\zeta\bs\nu_j') = \v y_0$, so $\v y_j-\v y_0 = \cO(\zeta h)$ because $\cP$ is Lipschitz near $\v x_j$. At $\v y_j$, the offset
  \[\v x_j-\v y_j= \zeta\bs\nu_j-(\v y_j-\v y_0)\] is a normal vector of length $\zeta_j = \zeta+\cO(\zeta h)$ whose direction differs from $\bs\nu_j$, and hence from $\bs\nu_j'$, by $\cO(h)$, and the tangent spaces at $\v y_j$ and $\v y_0$ differ by $\cO(\zeta h)$. Applying \eqref{eqn:curv:codim_ray} at $\v x_j$ to the rays $\v q_i$, which lie in the tangent space up to $\cO(h)$, gives $B_j = h(I-\zeta_jQ^TS_{\bs\nu_j}Q)^{-1}+\cO(h^2)+\cO(\zeta h^2)$, and inverting as in (ii) gives the claim.
\end{enumerate}
\end{proof}

\Cref{alg:curv:codim} uses $1+d+(d_{\mathrm{co}}-1)(d_{\mathrm{in}}+1)$ closest point projections, one eigendecomposition of a $d\times d$ matrix and $d_{\mathrm{co}}-1$ inversions of $d_{\mathrm{in}}\times d_{\mathrm{in}}$ matrices. For a hypersurface ($d_{\mathrm{co}} = 1$), it is \cref{alg:curv:recon_direction} with symmetric $B$. For $d_{\mathrm{in}}\ll d$, it needs about $(d_{\mathrm{in}}+2)d$ projections, which grows linearly in $d$ for a manifold of fixed dimension. The $d_{\mathrm{in}}$ eigenvalues of order $h$ are separated from the $d_{\mathrm{co}}$ eigenvalues of order $h^2$, so the gap also shows the dimension $d_{\mathrm{in}}$ of $\cM$ when it is not known. Vanishing shape operators, $S_{\bs\nu} = 0$, also show the flat normal directions of $\cM$; these are the directions that make local linear regression on $\cM$ ill-posed~\cite{liu2023linear}.

\begin{remark}[$\cM$ as a nested hypersurface]\label{rmk:curv:flag}
Let $\cM = S_1\cap\dots\cap S_{d_{\mathrm{co}}}$ be a proper intersection of hypersurfaces, as in the construction of~\cite[Sec.~5.2]{marz2012calculus}, and let $\cM_j = S_1\cap\dots\cap S_j$, so that $\cM = \cM_{d_{\mathrm{co}}}\subset\cM_{d_{\mathrm{co}}-1}\subset\dots\subset\cM_0 = \bbR^d$ and each $\cM_j$ is a hypersurface of $\cM_{j-1}$.

Let $\bs\nu_j$ be the unit normal of $\cM_j$ in $\cM_{j-1}$ at $\v y\in\cM$. The normal is taken inside $\cM_{j-1}$, so $\bs\nu_j\in\cT_{\v y}\cM_{j-1}$, and it is orthogonal to $\cT_{\v y}\cM_j$, which has codimension one in $\cT_{\v y}\cM_{j-1}$; this determines $\bs\nu_j$ up to sign. For $i<j$, $\bs\nu_i$ is orthogonal to $\cT_{\v y}\cM_i$, which contains $\cT_{\v y}\cM_{j-1}$, and hence to $\bs\nu_j$. So $\bs\nu_1,\dots,\bs\nu_{d_{\mathrm{co}}}$ is an orthonormal basis of $\cT^\perp_{\v y}\cM$. For a curve $\alpha$ on $\cM$ with $\alpha(0) = \v y$, differentiating $\inpd{\alpha'}{\tilde{\bs\nu}_j\circ\alpha} = 0$ as in \cref{sec:curv:prelim_codim} gives $\inpd{\sff_{\v y}(\v v,\v v)}{\bs\nu_j} = \sff^{(j)}_{\v y}(\v v,\v v)$ with $\v v = \alpha'(0)$, where $\sff^{(j)}$ is the second fundamental form of the hypersurface $\cM_j$ in $\cM_{j-1}$. Hence
\begin{equation}\label{eqn:curv:sff_nested}
  \sff_{\v y}(\v v,\v w) = \sum_{j=1}^{d_{\mathrm{co}}}\sff^{(j)}_{\v y}(\v v,\v w)\,\bs\nu_j,\qquad\v v,\v w\in\cT_{\v y}\cM.
\end{equation}
So $S_{\bs\nu_j}$ is the shape operator of $\cM_j$ in $\cM_{j-1}$, restricted to $\cT_{\v y}\cM$, and with these normal directions each step of \cref{alg:curv:codim} is a hypersurface problem, in the same order as the closest point function of M\"arz and Macdonald, which projects onto $S_1$ first, then within $S_1$ onto $\cM_2$, and so on.
\end{remark}

\begin{remark}[Closest point functions of M\"arz and Macdonald~\cite{marz2012calculus}]\label{rmk:curv:cpf}
For $\cM = \{\varphi_1 = \dots = \varphi_{d_{\mathrm{co}}} = 0\}$, a proper intersection of level sets, $\cP\v x$ has in general no closed form, and each query needs a constrained minimization. M\"arz and Macdonald~\cite[Sec.~5]{marz2012calculus} instead retract onto the level sets one at a time, in the order of \cref{rmk:curv:flag}, and obtain a \emph{closest point function} $\mathrm{cp}$: a $C^1$ map from a neighborhood of $\cM$ onto $\cM$ that is the identity on $\cM$ and satisfies $\cD\mathrm{cp}(\v y) = \Proj_{\cT_{\v y}\cM}$ for $\v y\in\cM$~\cite[Def.~3.1]{marz2012calculus}. Such a function can replace $\cP$ in \cref{alg:curv:codim}, but the estimate is less accurate. The preimages of $\mathrm{cp}$ meet $\cM$ orthogonally~\cite[Thm.~3.7]{marz2012calculus}, but unlike the flat normal disks of $\cP$ (\cref{thm:curv:dP_codim}(i)), they bend by $\cO(\zeta^2)$ in the tangent directions at distance $\zeta$ from $\cM$. This changes the derivative at $\v x_0$ by $\cO(\zeta^2)$ compared with \eqref{eqn:curv:dP_codim}, and since the curvature is computed from the term $\zeta S_{\v n}$ of this derivative, the curvature error is of order $\zeta$, in addition to $\cO(h/\zeta)$. So the probing points must stay close to $\cM$, and the best error, at $\zeta\sim\sqrt h$, is of order $\sqrt h$, while with $\cP$ the error is $\cO(h/\zeta)+\cO(h)$ at every offset within the reach (\cref{thm:curv:codim}).
\end{remark}

\begin{remark}[The whole normal curvature vector from one probing point]\label{rmk:curv:codim_meusnier}
Meusnier's theorem \eqref{eqn:curv:meusnier} gives all $d_{\mathrm{co}}$ normal directions at once, at second order. For every ray from $\v x_0$, the normal part of the acceleration of the induced curve is $\sff_{\v y_0}(\v t,\v t)$, where $\v t$ is its velocity. With the tangent space $\hat T$ of \cref{alg:curv:codim}, the central second difference
\begin{equation}\label{eqn:curv:second_difference} \frac{\Proj_{\hat T^\perp}\big(\cP(\v x_0+h\v r)-2\v y_0+\cP(\v x_0-h\v r)\big)}{h^2}\end{equation} approximates this vector, and a least squares problem as in \eqref{eqn:curv:lstsq}, with $d_{\mathrm{co}}$ right-hand sides, over $M\ge d_{\mathrm{in}}(d_{\mathrm{in}}+1)/2$ rays recovers $\sff_{\v y_0}$ from one probing point. For the graph surface of \cref{fig:curv:codim} in $\bbR^6$, $\bbR^{30}$ and $\bbR^{100}$, with $M = 12$ rays and central differences also for the tangent space, the relative error was below $2\times10^{-4}$ for $h = 10^{-2}$ and below $2\times10^{-6}$ for $h = 10^{-3}$. As in \cref{rmk:curv:meusnier}, the price is the use of the normal space, here the estimated $\hat T^\perp$.
\end{remark}

\subsection{Experiments}\label{sec:curv:codim_experiments}
We test \cref{alg:curv:codim} on two surfaces (intrinsic dimension $d_{\mathrm{in}} = 2$) with exact closest point projections, each rotated into $\bbR^d$ at random: the flat torus $\bbS^1(1)\times\bbS^1(0.5)\subset\bbR^4$, and the graph surface $\{(\v s,\v f(\v s))\}$ with $f_j(\v s) = \frac12\v s^TA_j\v s+\frac14\norm{\v s}^4b_j$, $j = 1,\dots,4$, with random symmetric $A_j$ and random $b_j$. For larger $d$, the additional normal directions are flat. The error is $\norm{\hat\sff-\sff}/\norm{\sff}$ with $\norm{\sff}^2 = \sum_j\norm{S_{\bs\nu_j}}_F^2$, at five random base points with offset $\zeta = 0.3$. It decreases linearly in $h$, as \cref{thm:curv:codim} predicts, both in $\bbR^6$ and in $\bbR^{100}$ (\cref{fig:curv:codim}(a)). At $h = 10^{-3}$, it stays between $5\times10^{-5}$ and $1.4\times10^{-3}$ for every codimension from $2$ to $98$ (\cref{fig:curv:codim}(b)), at a cost of $1+d+(d_{\mathrm{co}}-1)(d_{\mathrm{in}}+1)$ projections, $392$ for $d_{\mathrm{co}} = 98$.

\begin{figure}[htbp]
  \centering
  \includegraphics[width=0.9\textwidth]{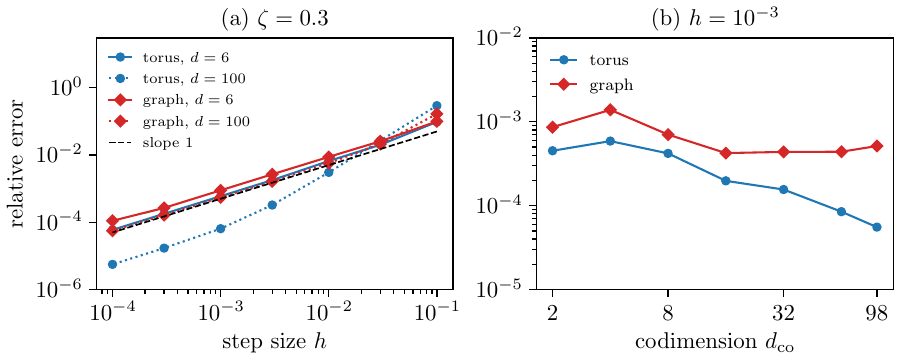}
  \caption{\Cref{alg:curv:codim} on surfaces of dimension $d_{\mathrm{in}} = 2$ with exact closest point projections. (a) Relative error of the second fundamental form against the step size $h$, for the flat torus and the graph surface in $\bbR^6$ and $\bbR^{100}$, offset $\zeta = 0.3$, median over five base points. (b) The same error against the codimension $d_{\mathrm{co}} = d-2$, for offset $\zeta = 0.3$ and step size $h = 10^{-3}$.}
  \label{fig:curv:codim}
\end{figure}

\section{Computations of curvatures on point clouds}\label{sec:curv:point_cloud}

In this section, $\cM$ is again a hypersurface. While our earlier analysis and algorithms are formulated under smoothness and exact projection assumptions, practical applications typically provide only finite point samples of the underlying surface. Since~\cref{alg:curv:recon_direction} delivers controllable error bounds even in the absence of tangent information, it is natural to ask how it performs when the exact closest point projection is replaced by a nearest-neighbor query on a finite point cloud.

\subsection{Error bounds of~\texorpdfstring{\cref{alg:curv:recon_direction}}{the direct algorithm} on point clouds}\label{sec:curv:error_point_cloud}
\begin{definition}[$\varepsilon$-net/$\varepsilon$-sample]\label{def:curv:eps_net}
Given $\varepsilon>0$, a set $\Gamma$ is an $\varepsilon$-net of the manifold $(\cM, d_{\cM})$, with the geodesic distance $d_\cM$, if $\Gamma \subset \cM$ and every $\v y\in\cM$ has a point $\v y'\in\Gamma$ with $d_\cM(\v y',\v y)\le\varepsilon$. Equivalently, a point cloud $\Gamma\subset\cM$ is an $\varepsilon$-sample of $\cM$ if the Hausdorff distance with respect to $d_\cM$ satisfies
\begin{equation}\label{eqn:curv:eps_sample}
d_H(\Gamma, \cM) = \sup_{\v y \in \cM} \inf_{\v y' \in \Gamma} d_\cM(\v y',\v y) \le \varepsilon .
\end{equation}
\end{definition}
\begin{remark}
The Euclidean distance between two points of $\cM$ is at most their geodesic distance, $\norm{\v y'-\v y}\le d_\cM(\v y',\v y)$, so an $\varepsilon$-net of $(\cM,d_\cM)$ is also an $\varepsilon$-net for the Euclidean distance. Therefore, for convenience of analysis, we adopt the Euclidean metric when defining $\varepsilon$-nets in what follows.
\end{remark}
Let $\cP_\Gamma : \bbR^d \to \Gamma$ denote the nearest neighbor projection onto the point cloud, as in~\cite{liu2024nearest}, with $\cP_\Gamma\v x = \tilde{\v y} \in \Gamma$. It is constant on each Voronoi cell of $\Gamma$, so along a ray it changes only where the ray crosses into another cell; this is the source of the discretization effects below.
We now investigate the deviation between the exact projection $\v y= \cP\v x$ and its discrete counterpart $\tilde{\v y}$.

Consider again the aligned local frame (Frame--1) introduced in~\cref{sec:curv:recon_direction}, now at $\v y = \cP\v x$, so that $\v y$ is the origin and $\v x-\v y$ is along $\v e_d$, where locally the manifold $\cM$ is represented as the graph of a function $f$ with $f(\v 0)=0$, $\nabla f(\v 0)=\v 0$. Assume $\norm{\Hess f} \le L$ on the ball of radius $\varepsilon$ around $\v 0$, with the same $L$ at every closest point used below. We also use the quadratic growth of \cref{thm:curv:quad_growth}.
\begin{lemma}[Bounds on deviation of the projections]\label{thm:curv:bound_proj_err}
Fix $\v x\notin\cM$ within the reach of $\cM$, with $\zeta = \norm{\v x-\cP\v x}$, and let $\Gamma$ be an $\varepsilon$-sample of $\cM$. Then $\v y$ and $\tilde{\v y}$, the projections of $\v x$ onto $\cM$ and $\Gamma$ respectively, satisfy
\begin{equation}\label{eqn:curv:proj_deviation}
\norm{\v y - \tilde{\v y}} \le \varepsilon \sqrt{\frac{1 + \zeta L + \frac{1}{4} L^2 \varepsilon^2}{1-\zeta/\iota}} =:\delta(\v x).
\end{equation}
\end{lemma}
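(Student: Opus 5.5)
The plan is to sandwich $\norm{\v x-\tilde{\v y}}^2$ between a lower bound from the quadratic growth of \cref{thm:curv:quad_growth} and an upper bound from a net point near $\v y$, and then solve for $\norm{\v y-\tilde{\v y}}$.

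\textbf{Lower bound.} Since $\tilde{\v y}\in\Gamma\subset\cM$, \cref{thm:curv:quad_growth} applied with $\v z=\tilde{\v y}$ gives
\[
\norm{\v x-\tilde{\v y}}^2\ge\zeta^2+\Big(1-\frac{\zeta}{\iota}\Big)\norm{\tilde{\v y}-\v y}^2 .
\]

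\textbf{Upper bound.} Because $\Gamma$ is an $\varepsilon$-sample, there is $\v y'\in\Gamma$ with $\norm{\v y'-\v y}\le\varepsilon$. Since $\tilde{\v y}$ is the nearest point of $\Gamma$ to $\v x$, we have $\norm{\v x-\tilde{\v y}}\le\norm{\v x-\v y'}$. I would estimate $\norm{\v x-\v y'}^2$ in Frame--1, where $\v y=\v 0$ and $\v x=(\v 0,\zeta)$. Write $\v y'=(\v s,f(\v s))$. The Euclidean bound $\norm{\v y'}\le\varepsilon$ gives $\norm{\v s}\le\varepsilon$ and keeps us in the ball where $\norm{\Hess f}\le L$. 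Taylor's theorem with $f(\v 0)=0$ and $\nabla f(\v 0)=\v 0$ gives $\abs{f(\v s)}\le\frac12L\norm{\v s}^2\le\frac12L\varepsilon^2$. Then
\[
\norm{\v x-\v y'}^2=\norm{\v s}^2+(\zeta-f(\v s))^2=\norm{\v s}^2+\zeta^2-2\zeta f(\v s)+f(\v s)^2 .
\]
I bound this term by term: $\norm{\v s}^2\le\varepsilon^2$, $-2\zeta f\le\zeta L\varepsilon^2$, and $f^2\le\frac14L^2\varepsilon^4$. Together,
\[
\norm{\v x-\v y'}^2\le\zeta^2+\varepsilon^2\Big(1+\zeta L+\tfrac14L^2\varepsilon^2\Big).
\]

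\textbf{Combining.} Chaining the two bounds, the $\zeta^2$ terms cancel, leaving
\[
\Big(1-\frac{\zeta}{\iota}\Big)\norm{\tilde{\v y}-\v y}^2\le\varepsilon^2\Big(1+\zeta L+\tfrac14L^2\varepsilon^2\Big).
\]
Dividing by $1-\zeta/\iota>0$, which holds because $\zeta<\iota$, and taking square roots gives \eqref{eqn:curv:proj_deviation}.

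\textbf{Main obstacle.} The delicate point is the local-graph step. We need the graph representation of $\cM$ over $\cT_{\v y}\cM$ to be valid on a region containing $\v y'$, and the Hessian bound to apply along the segment from $\v 0$ to $\v s$. The stated hypothesis covers this, since it assumes $\norm{\Hess f}\le L$ on the $\varepsilon$-ball around $\v 0$ and $\norm{\v s}\le\norm{\v y'}\le\varepsilon$. One must also check that the sign of $f$ is handled correctly; bounding $\abs{f}$ avoids any convexity assumption.
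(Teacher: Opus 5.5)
Your proposal is correct and follows the paper's proof essentially step for step. You take the net point $\v y'$ with $\norm{\v s}\le\varepsilon$ and $\abs{f(\v s)}\le\frac12L\varepsilon^2$ in the aligned frame, which gives $\norm{\v x-\v y'}^2\le\zeta^2+\varepsilon^2\big(1+\zeta L+\tfrac14L^2\varepsilon^2\big)$. You then use the nearest-neighbor property of $\tilde{\v y}$, and combine this with the quadratic growth bound of \cref{thm:curv:quad_growth} at $\v z=\tilde{\v y}$.
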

\begin{proof}
Since $\Gamma$ is an $\varepsilon$-sample, there exists $\v y' \in \Gamma$ such that $\norm{\v y' - \v y} \le \varepsilon$. In local coordinates, $\v y' = (\v s, f(\v s))$ with $\norm{\v s} \le \varepsilon$ and, by Taylor's theorem, $\abs{f(\v s)} \le \tfrac{1}{2} L \varepsilon^2$. Since $\v x-\v y$ is normal to $\cT_{\v y}\cM$,
\[
\norm{\v y' - \v x}^2 = \norm{\v s}^2 + \big(\zeta - f(\v s)\big)^2 \le \varepsilon^2 + \big(\zeta + \tfrac{1}{2} L \varepsilon^2\big)^2 = \zeta^2+\varepsilon^2\big(1+\zeta L+\tfrac14L^2\varepsilon^2\big).
\]
By definition of $\tilde{\v y}$, $\norm{\tilde{\v y} - \v x} \le \norm{\v y' - \v x}$. On the other hand, \cref{thm:curv:quad_growth} with $\v z = \tilde{\v y}\in\Gamma\subset\cM$ gives $\norm{\tilde{\v y}-\v x}^2\ge\zeta^2+(1-\zeta/\iota)\norm{\tilde{\v y}-\v y}^2$. Combining the two inequalities and subtracting $\zeta^2$ yields the claim.
\end{proof}
The preliminary estimates on the deviation of $\tilde{\v y}=\cP_\Gamma\v x$ provide tools to quantify the errors when~\cref{alg:curv:recon_direction} is applied to $\Gamma$, where the matrix $WU^T$ in~\eqref{eqn:curv:key_alg2} is replaced by $\tilde W U^T$ with columns of $\tilde W$ being $\tilde{\v w}_i = \cP_\Gamma(\v x + h \v u_i) - \cP_\Gamma\v x$, denoting the displacement of the projection onto $\Gamma$. Since $\tilde WU^T$ is not symmetric, we take the eigenvalues of its symmetric part $\frac12(\tilde WU^T+U\tilde W^T)$, and compare them with those of the symmetric part of $WU^T = h\cD\cP(\v x)+\cO(h^2)$. Then the primary objective is to control
$\norm{WU^T - \tilde W U^T}_2 = \norm{W - \tilde W}_2$ (since $U$ is orthogonal), which ultimately allows us to bound the deviation of the curvature estimates.
\begin{lemma}[Bounds on deviation of the eigenvalues]\label{thm:curv:bound_eval_ptcloud}
Let $\delta$ be the maximum of $\delta(\v x_0)$ and $\delta(\v x_0+h\v u_i)$, $i=1,\dots,d$, from \cref{thm:curv:bound_proj_err}. Then the eigenvalues $\tilde\mu_1\le\dots\le\tilde\mu_d$ of the symmetric part of $\tilde WU^T$ and $\mu_1\le\dots\le\mu_d$ of the symmetric part of $WU^T$ satisfy
\begin{equation}\label{eqn:curv:eig_perturbation}
\abs{\tilde\mu_i-\mu_i}\le2\sqrt{d}\,\delta,\qquad i=1,\dots,d.
\end{equation}
\end{lemma}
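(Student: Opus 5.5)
We compare the two symmetric matrices and apply Weyl's inequality, so it suffices to bound the spectral norm of the difference of their symmetric parts. Write $E\defeq\tilde W-W$. The symmetric parts differ by $\frac12\big(EU^T+UE^T\big)$. Since $U$ is orthogonal, $\norm{EU^T}_2=\norm{UE^T}_2=\norm{E}_2$, and the triangle inequality gives
\[
\normbig{\tfrac12\big(\tilde WU^T+U\tilde W^T\big)-\tfrac12\big(WU^T+UW^T\big)}_2\le\norm{E}_2 .
\]
Weyl's inequality for symmetric matrices, with the eigenvalues ordered increasingly on both sides, then yields $\abs{\tilde\mu_i-\mu_i}\le\norm{E}_2$ for every $i$.

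Next we bound $\norm{E}_2$ column by column. The $i$-th column of $E$ is
\[
\tilde{\v w}_i-\v w_i=\big(\cP_\Gamma(\v x_0+h\v u_i)-\cP(\v x_0+h\v u_i)\big)-\big(\cP_\Gamma\v x_0-\cP\v x_0\big).
\]
Apply \cref{thm:curv:bound_proj_err} at the points $\v x_0+h\v u_i$ and $\v x_0$. Both lie within the reach by \cref{asm:curv:standing}, and the same curvature bound $L$ is assumed at every closest point used. Each parenthesis therefore has norm at most $\delta$, so every column of $E$ has norm at most $2\delta$.

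Finally, we pass from columns to the spectral norm by $\norm{E}_2\le\norm{E}_F=\big(\sum_{i=1}^d\norm{\tilde{\v w}_i-\v w_i}^2\big)^{1/2}\le\sqrt d\cdot2\delta$. Combining this with the Weyl step gives \eqref{eqn:curv:eig_perturbation}.

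No step is a real obstacle. The only point needing care is that $\tilde WU^T$ is not symmetric, which is why the argument works with symmetric parts and needs the factor $\frac12$ and orthogonality of $U$ to get a factor $1$ rather than $2$. The factor $\sqrt d$ comes from the crude Frobenius bound. It could be improved only with some cancellation between columns, which we do not attempt.
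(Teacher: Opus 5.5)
Your proposal is correct and matches the paper's proof step for step. Both use \cref{thm:curv:bound_proj_err} at $\v x_0$ and at $\v x_0+h\v u_i$ to get $\norm{\tilde{\v w}_i-\v w_i}\le2\delta$, then pass to $\norm{\tilde W-W}_2\le2\sqrt d\,\delta$, note that the symmetric parts differ by at most $\norm{(\tilde W-W)U^T}_2=\norm{\tilde W-W}_2$, and finish with Weyl's inequality; you only spell out the paper's ``standard bound'' on the spectral norm through the Frobenius norm.
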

\begin{proof}
A standard bound on the spectral norm yields $\norm{W - \tilde W}_2 \le \sqrt{d} \max_i \norm{\v w_i - \tilde{\v w}_i}$. To bound $\v w - \tilde{\v w}$, observe
\[\v w - \tilde{\v w}
= \cP(\v x + h\v u) - \cP\v x
   - \cP_\Gamma(\v x + h\v u) + \cP_\Gamma\v x
= \cP(\v x + h\v u) - \cP_\Gamma(\v x + h\v u)
   + \tilde{\v y} - \v y ,\]
where both terms are bounded by~\cref{thm:curv:bound_proj_err}, the first one at the base point $\v x + h\v u$ instead of $\v x$. Hence $\norm{\v w - \tilde{\v w}}\le2\delta$ and $\norm{W-\tilde W}_2\le2\sqrt d\,\delta$. Finally, the symmetric parts of $\tilde WU^T$ and $WU^T$ differ by the symmetric part of $(\tilde W-W)U^T$, whose norm is at most $\norm{(\tilde W-W)U^T}_2 = \norm{\tilde W-W}_2$, and Weyl's inequality gives the claim.
\end{proof}
Even with an $\cO(\sqrt{d}\delta)$ perturbation bound, it is still necessary to ensure that the estimate is meaningful relative to the scale of the eigenvalues. Recall from the proof of~\cref{thm:curv:error_alg2} that
\begin{equation}\label{eqn:curv:mu_effective}
\mu_i = \frac{h}{1-\kappa_i\zeta_i},\qquad \zeta_i = \zeta+\cO\big(\max_j\abs{\gamma_j}h\big),
\end{equation}
with the effective offsets $\zeta_i$; by the same argument, this holds for the symmetric part of $WU^T$ as well. We now translate the eigenvalue bounds into error bounds on the computed curvatures.

\begin{corollary}[Bounds on deviation of the computed curvatures]\label{thm:curv:curv_ptcloud}
For a fixed principal curvature $\kappa_i$, denote by $\hat\kappa_i$ the corresponding curvature computed by~\cref{alg:curv:recon_direction} on $\cM$, and by $\hat\kappa_i^\Gamma$ the curvature computed on $\Gamma$. Let $\delta_\mu = 2\sqrt d\,\delta$, let $\zeta_i$ be the effective offsets above, and assume $\delta_\mu\abs{1-\kappa_i\zeta_i}<h$. Then
\begin{equation}\label{eqn:curv:ptcloud_bound}
\abs{\hat\kappa_i - \hat\kappa_i^\Gamma}\le
\frac{\big(1-\kappa_i\zeta_i\big)^2\, \delta_\mu}
{\zeta\,\big(h -\abs{1-\kappa_i\zeta_i}\,\delta_\mu\big)},
\end{equation}
which grows as $h\to 0$ or $\zeta\to\infty$, but tends to $0$ as $\varepsilon\to 0$ as expected.
\end{corollary}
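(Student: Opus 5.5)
The estimate is a one-variable perturbation of the map $\mu\mapsto (1-h/\mu)/\zeta$, which turns an eigenvalue into a curvature in \cref{alg:curv:recon_direction}. We feed it the eigenvalue bound of \cref{thm:curv:bound_eval_ptcloud}. Both $\hat\kappa_i$ and $\hat\kappa_i^\Gamma$ are computed with the same offset $\zeta=\norm{\v x_0-\v y_0}$, so \eqref{eqn:curv:extract_kappa} gives
\[
\hat\kappa_i-\hat\kappa_i^\Gamma=\frac{1}{\zeta}\Big(\frac{h}{\tilde\mu_i}-\frac{h}{\mu_i}\Big)=\frac{h\,(\mu_i-\tilde\mu_i)}{\zeta\,\mu_i\tilde\mu_i}.
\]
Here $\mu_i$ is the eigenvalue of the symmetric part of $WU^T$ paired with $\kappa_i$, and $\tilde\mu_i$ is the eigenvalue with the same index for $\Gamma$. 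By \cref{thm:curv:bound_eval_ptcloud} the numerator is at most $h\,\delta_\mu$ in absolute value. It remains to bound $\abs{\mu_i}$ and $\abs{\tilde\mu_i}$ from below.

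For $\mu_i$ we use \eqref{eqn:curv:mu_effective}, which gives $\mu_i=h/(1-\kappa_i\zeta_i)$, so $\abs{\mu_i}=h/\abs{1-\kappa_i\zeta_i}$. For $\tilde\mu_i$, Weyl's inequality gives $\abs{\tilde\mu_i}\ge\abs{\mu_i}-\delta_\mu=\big(h-\abs{1-\kappa_i\zeta_i}\delta_\mu\big)/\abs{1-\kappa_i\zeta_i}$. By the hypothesis $\delta_\mu\abs{1-\kappa_i\zeta_i}<h$ this is positive. In particular $\tilde\mu_i$ has the same sign as $\mu_i$ and does not vanish, so $\hat\kappa_i^\Gamma$ is well defined. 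Substituting the two lower bounds into the displayed identity gives
\[
\abs{\hat\kappa_i-\hat\kappa_i^\Gamma}\le\frac{h\,\delta_\mu\,(1-\kappa_i\zeta_i)^2}{\zeta\,h\,\big(h-\abs{1-\kappa_i\zeta_i}\delta_\mu\big)}.
\]
The factors of $h$ in the numerator and denominator cancel, and this is exactly \eqref{eqn:curv:ptcloud_bound}.

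For the limiting behaviour we read off the final expression. As $h\to0$ the denominator $h-\abs{1-\kappa_i\zeta_i}\delta_\mu$ decreases, so the bound grows. As $\zeta\to\infty$ the factor $(1-\kappa_i\zeta_i)^2/\zeta$ grows like $\zeta$, with $\zeta_i\approx\zeta$, so the bound also grows. As $\varepsilon\to0$, \cref{thm:curv:bound_proj_err} gives $\delta=\cO(\varepsilon)$, hence $\delta_\mu\to0$ with $h,\zeta$ fixed, and the bound tends to $0$.

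The main point needing care is the pairing of eigenvalues. \Cref{thm:curv:bound_eval_ptcloud} compares eigenvalues sorted by size, so the $\tilde\mu_i$ with the same sorted index as $\mu_i$ is the one used in $\hat\kappa_i^\Gamma$. The extra small eigenvalue of order $h^2$, which belongs to the normal direction, must not be confused with a curvature eigenvalue. It moves by at most $\delta_\mu$, while every curvature eigenvalue satisfies $\abs{\mu_i}>\delta_\mu$ by hypothesis. The curvature eigenvalues are positive, since $\abs{\kappa_i\zeta_i}<1$ within the reach. It follows that the $d-1$ curvature eigenvalues remain the top $d-1$ eigenvalues of the symmetric part of $\tilde WU^T$ and keep their positive sign. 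Hence the sorted matching used in \cref{alg:curv:recon_direction} is consistent with the indices of \cref{thm:curv:bound_eval_ptcloud}.
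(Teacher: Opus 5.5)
Your proposal is correct and follows the paper's proof. You write $\hat\kappa_i-\hat\kappa_i^\Gamma = h(\mu_i-\tilde\mu_i)/(\zeta\mu_i\tilde\mu_i)$, bound the numerator by $h\delta_\mu$ via \cref{thm:curv:bound_eval_ptcloud}, use $\abs{\tilde\mu_i}\ge\abs{\mu_i}-\delta_\mu>0$ under the hypothesis, and substitute $\mu_i=h/(1-\kappa_i\zeta_i)$; your extra paragraph on eigenvalue pairing is not in the paper and not needed, since Weyl's inequality already pairs sorted indices, so the top $d-1$ perturbed eigenvalues automatically correspond to the curvature eigenvalues.
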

\begin{proof}
From~\cref{thm:curv:recon_direction}, $\hat\kappa_i = (1 - h/\mu_i)/\zeta$. On $\Gamma$, we would compute $\hat\kappa_i^\Gamma = (1 - h/\tilde\mu_i)/\zeta$, and
\[
\abs{\hat\kappa_i-\hat\kappa_i^\Gamma}= \abs{\frac{h(\mu_i - \tilde\mu_i)}{\mu_i \tilde\mu_i \zeta}}
\le\frac{h\delta_\mu}{\abs{\mu_i}\big(\abs{\mu_i}-\delta_\mu\big)\zeta},
\]
by \cref{thm:curv:bound_eval_ptcloud}, as long as $\delta_\mu<\abs{\mu_i}$. Substituting $\mu_i = h/(1-\kappa_i\zeta_i)$ gives the stated bound.
\end{proof}
To interpret the bound, assume $L = \cO(1)$ and $h$ is moderately small; for large $\zeta$, $\abs{1-\kappa_i\zeta_i} = \cO(\zeta)$ and $\delta_\mu = \cO(\sqrt{d\zeta}\,\varepsilon)$. Since $\zeta$ exceeds the reach, we use \cref{asm:curv:convex_side}: then $f\le0$ near $\v 0$, so $\norm{\v z-\v x}^2 = \norm{\v s}^2+(\zeta-f(\v s))^2\ge\zeta^2+\norm{\v z-\v y}^2$ for $\v z = (\v s,f(\v s))$ near $\v y$, and the factor $1-\zeta/\iota$ in \cref{thm:curv:bound_proj_err} can be replaced by $1$. Taking either $h \to 0$ or $\zeta \to \infty$, as for the smooth case in~\cref{sec:curv:alg2_error}, violates the condition $\delta_\mu\abs{1-\kappa_i\zeta_i}<h$, and the bound gives no control: the perturbation of the eigenvalues is then as large as the eigenvalues themselves. The only regime in which convergence is recovered is when $\varepsilon \to 0$ while $h$ remains moderately small. In this case, the bound simplifies to
\begin{equation}\label{eqn:curv:ptcloud_rate}
\abs{\hat\kappa_i^\Gamma - \hat\kappa_i} \le\cO\left( \frac{\sqrt{d\zeta}\, \zeta \varepsilon}{h} \right),
\end{equation}
so $h$ and $\zeta$ enter in the opposite way from the error presented in~\cref{thm:curv:error_alg2}. For large $\zeta$ or small $h$, the projected points will remain in a very small neighborhood where discretization effects may dominate and significantly distort the displacement of the projection relative to the smooth setting.

We check the bound on $N$ uniform samples of the unit sphere in $\bbR^3$, for which $\varepsilon = 0.072$, $0.023$ and $0.009$ for $N = 10^4$, $10^5$ and $10^6$, by comparing the curvatures that \cref{alg:curv:recon_direction} computes with the nearest neighbor projection $\cP_\Gamma$ onto the samples and with the exact closest point projection onto the sphere, $\cP\v x = \v x/\norm{\v x}$, for the same base points and rays (\cref{fig:curv:ptcloud_bound}). The error decreases like $\varepsilon/h$, as the bound predicts, until $h/(1+\zeta)$ comes down to the sample spacing; below it, a ray can stay in the Voronoi cell of $\cP_\Gamma\v x_0$, and the estimate fails.

The bound holds in all trials and has the right rate, but it is $30$ to $40$ times larger than the median error, since it takes the worst projection error $2\delta$ for every ray together with the factor $\sqrt d$, and its condition $\delta_\mu\abs{1-\kappa_i\zeta_i}<h$ asks for steps about ten times longer than needed. It also grows faster in $\zeta$ than the error, which grows like $(1+\zeta)^2/\zeta$. The factor $\sqrt{1+\zeta L}$ in $\delta$ is attained when $\cM$ has a flat direction, but on the sphere $\norm{\v z-\v x}^2 = \zeta^2+(1+\zeta)\norm{\v z-\v y}^2$, so the sample nearest to $\v x$ is the one nearest to $\v y$, and $\delta\le\varepsilon$.
\begin{figure}[htbp]
  \centering
  \includegraphics[width=0.9\textwidth]{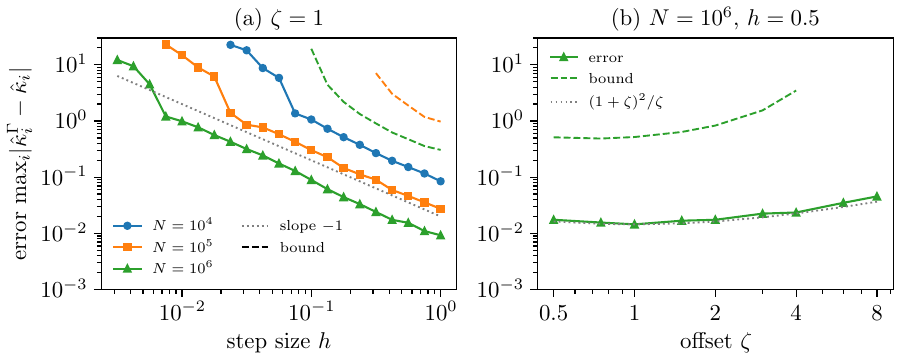}
  \caption{\Cref{alg:curv:recon_direction} on point clouds of the unit sphere. Error between the principal curvatures computed on $\Gamma$ and with the exact projection, for the same base point and rays, median over $200$ base points and frames; dashed: the bound of \cref{thm:curv:curv_ptcloud}. (a) Against the step size $h$, for the offset $\zeta = 1$. (b) Against $\zeta$, for $N = 10^6$ and $h = 0.5$; the dotted line is proportional to $(1+\zeta)^2/\zeta$.}
  \label{fig:curv:ptcloud_bound}
\end{figure}

\subsection{Practical adjustments for point clouds}\label{sec:curv:point_cloud_adjustment}
So on point clouds, the step size $h$ has to be long enough compared with the sample spacing, while \cref{thm:curv:error_alg2} prefers short steps, and in $\bbR^3$ the concentration of \cref{sec:curv:concentration} does not keep the slopes of the rays small. In our implementation, we therefore adjust \cref{alg:curv:recon_direction} as follows. The queries are restricted to the $N_{\mathrm{loc}} = 16^{d-1}$ nearest samples of $\tilde{\v y}_0 = \cP_\Gamma\v x_0$, and $\v x_0$ is drawn again if $\zeta<0.5$, since the factor $(1-\kappa_i\zeta_i)^2/\zeta$ in \cref{thm:curv:curv_ptcloud} grows as $\zeta\to0$. For the $j$-th ray of a frame $i$, the first crossing $h^1_{ij}$ is the smallest step for which both $\v x_0\pm h\v u_j$ have left the Voronoi cell of $\tilde{\v y}_0$. We compute it exactly with a few nearest neighbor queries on the two segments from $\v x_0$ along $\pm\v u_j$: if the samples closest to the two ends of a segment differ, the segment crosses their bisecting hyperplane at a point given by one linear equation, and the query at this bisecting point on the segment either confirms the crossing or returns a closer sample, with which we repeat the process on the shorter segment between $\v x_0$ and this point. Among $12$ random orthonormal frames, we take the one with the smallest $\max_jh^1_{ij}$, so that all its rays leave the cell quickly, which avoids rays with large slopes. The step $h$ is then chosen to be
\begin{equation}\label{eqn:curv:step_size}
  h = \frac{\median(\cR)}{\median(\cR_4)}\,\min_i\max_j h^1_{ij},
\end{equation}
where $\cR$ holds the distances from $\tilde{\v y}_0$ to its $N_{\mathrm{loc}}$ nearest samples and $\cR_4$ the $4^{d-1}$ smallest of them. For evenly spread samples the ratio is about $4$, so the projected rays cross a few Voronoi cells. The displacements are central differences, $\tilde{\v w}_i = (\bar{\v y}_i^+-\bar{\v y}_i^-)/2$, where $\bar{\v y}_i^\pm$ is the mean of the $2^{d-1}$ nearest samples of $\v x_0\pm h\v u_i$, and the offset is $\zeta = \norm{\v x_0-\tilde{\v y}_0}$, so that the base point stays on $\cM$. Optionally, the displacements are averaged over the five step sizes $0.5h, 0.75h, h, 1.25h, 1.5h$. We use~\cite{douze2024faiss} for the nearest neighbor queries.

\subsection{Computational cost and comparison with quadratic fitting}\label{sec:curv:pca_compare}

\Cref{alg:curv:recon_direction} requires $\cO(d)$ ray samples, $\cO(d)$ closest point projections within a local neighborhood, and an eigendecomposition of a $d\times d$ matrix. For a point cloud of size $N$, finding a local neighborhood of size $N_{\mathrm{loc}}$ costs $\cO(Nd)$ (\cref{sec:curv:point_cloud_adjustment}), and the projections within it cost $\cO(N_{\mathrm{loc}}d^2)$, so the total cost per query point is
\begin{equation}\label{eqn:curv:complexity_rays}
\cO(Nd) + \cO(N_{\mathrm{loc}}d^2) + \cO(d^3).
\end{equation}
An additional structural advantage is that the closest point projections in~\cref{alg:curv:recon_direction} can be computed independently over separate data partitions and aggregated afterward, making the method well suited to parallel and distributed implementations.

A classical and widely adopted approach for estimating extrinsic curvatures from point clouds is to locally approximate the surface by a quadratic polynomial and compute the curvatures of the fitted model. For each query point, it finds the $N_{\mathrm{loc}}$ nearest samples, estimates the tangent and normal spaces by principal component analysis (PCA) of them, with the last principal component as the normal of a hypersurface, expresses the samples in these local coordinates, fits a quadratic by least squares, and computes the principal curvatures of the fit.
Although quadratic fitting is intuitive and widely used in many low-dimensional settings, its accuracy critically depends on the quality of the estimated tangent and normal spaces. In turn, this estimation from PCA is highly sensitive to the choice of the neighborhood size $N_{\mathrm{loc}}$. Moreover, the computational cost of the method grows rapidly with the ambient dimension: finding a local neighborhood of size $N_{\mathrm{loc}}$ costs $\cO(Nd)$, or $\cO(N\log N)$ for small $d$; PCA on the neighborhood $\cO(N_{\mathrm{loc}}d^2 + d^3)$; projection onto the tangent space $\cO(N_{\mathrm{loc}}d^2)$; and quadratic fitting, which requires solving a linear system of size $\cO(d^2)$, $\cO(N_{\mathrm{loc}}d^4 + d^6)$. As a result, the overall complexity is
\begin{equation}\label{eqn:curv:complexity_pca}
  \cO(Nd) + \cO(N_{\mathrm{loc}}d^4) + \cO(d^6),
\end{equation}
compared with $\cO(Nd) + \cO(N_{\mathrm{loc}}d^2) + \cO(d^3)$ for~\cref{alg:curv:recon_direction}. For the same neighborhood size $N_{\mathrm{loc}}$, the last two terms are smaller by the factors $d^2$ and $d^3$. Quadratic fitting also relies on explicit construction of local neighborhoods, which may be infeasible when the point cloud is distributed across multiple machines.

Constructing sufficiently dense point clouds in high dimensions is computationally prohibitive on a standard laptop, so we restrict the numerical comparison to point clouds sampled from the unit sphere and an ellipsoid of semi-axes $(3,\! 2,\! 1)$ in $\bbR^3$, with the goal of illustrating qualitative differences between the methods. The results are shown in~\cref{fig:curv:pca_compare}.
\begin{figure}[htbp]
  \centering
  \includegraphics[width=0.9\textwidth]{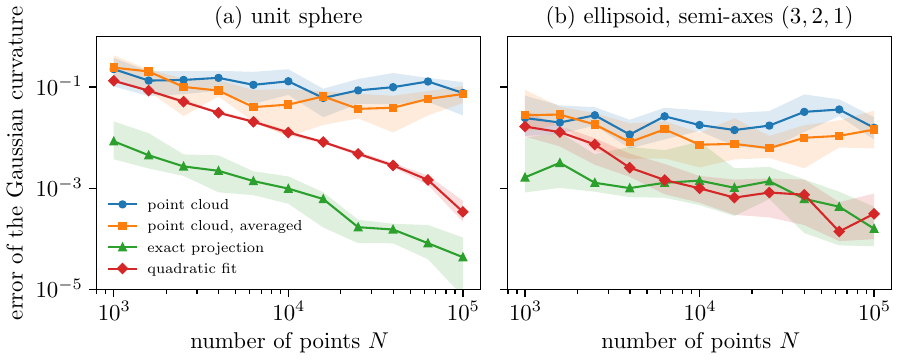}
  \caption{Comparison on sphere and ellipsoid point clouds with varying densities. Absolute errors of Gaussian curvature over $16$ random trials, for $N$ uniform samples of (a) the unit sphere and (b) the ellipsoid with semi-axes $(3,2,1)$; the lines are the medians and the shaded regions are the quartiles. Point cloud and point cloud, averaged: \cref{alg:curv:recon_direction} with the adjustments of \cref{sec:curv:point_cloud_adjustment}, with one step size and averaged over five, respectively. Exact projection: the same base points and rays with the exact closest point projection and $h = 10^{-2}$; its error comes from the offset $\zeta = \norm{\v x_0-\tilde{\v y}_0}$, which is measured to the sample $\tilde{\v y}_0$ nearest to $\v x_0$ from the point cloud instead of to the surface. Quadratic fit: after principal component analysis of the $64$ nearest samples.}
  \label{fig:curv:pca_compare}
\end{figure}

From the comparison, we observe that PCA-based quadratic fitting performs well on these relatively dense and low-dimensional datasets, achieving both small bias and low variance. The ray-based methods incur additional error due to the inexactness of closest point projections on point clouds. Nevertheless, the resulting error remains bounded within a modest range, which shows that the algorithm remains robust in this discrete setting. This error decreases little with $N$, as \cref{thm:curv:curv_ptcloud} predicts: the step $h$ follows the sample spacing, so $\varepsilon/h$ stays fixed. For the error to decrease with $N$, $h$ has to shrink more slowly than the sample spacing. With $h\sim\sqrt\varepsilon$, for example, the error $\cO(\varepsilon/h)$ from the point cloud and the error $\cO(h)$ of \cref{thm:curv:error_alg2} are both $\cO(\sqrt\varepsilon)$.

For the submanifolds of \cref{sec:curv:codim}, the projections onto a point cloud are again nearest neighbor queries, and the bounds of \cref{sec:curv:error_point_cloud} carry over to any codimension, since the proofs of these bounds use only the quadratic growth of \cref{thm:curv:quad_growth} and a bound on the second derivatives of the local graph. An $\varepsilon$-sample of $\cM$ then needs of the order of $\varepsilon^{-d_{\mathrm{in}}}$ points, a number that depends on the dimension of $\cM$ and not on $d$. In our first tests, however, the errors on point clouds grew with the codimension, since the discretization error of the nearest neighbor displacements enters each of the $d_{\mathrm{co}}$ normal directions. We leave the point-cloud version of \cref{alg:curv:codim} to future work.

Finally, we emphasize that our framework is not designed as a direct replacement for PCA-based methods on point clouds, but rather as a complementary approach that applies whenever a reasonably well-defined closest point projection is available. This includes not only point clouds, but also implicit surfaces, level-set representations, and hybrid geometric data. In this sense, the projection-based method provides a flexible approach for curvature computation across a wide range of geometric representations.

\section{Summary}\label{sec:curv:conclusion}

The main point of this paper is that the closest point projections of a few short rays give the extrinsic curvature of a manifold in any dimension and codimension, without local coordinate charts or local fitting. The argument is the following.
\begin{enumerate}
  \item A ray from a point $\v x_0$ near $\cM$ induces a curve on $\cM$ through the closest point projection. Its velocity at $\v y_0 = \cP\v x_0$ is the tangential part of the ray scaled exactly by $1/(1-\zeta\kappa_i)$ along the $i$-th principal direction at every offset $\zeta$ within the reach (\cref{thm:curv:parametrization,thm:curv:dP_codim}), where $\kappa_i$ are the principal curvatures in the direction of the offset. The normal part of its acceleration is the normal curvature of $\cM$ in the direction of the curve (\cref{rmk:curv:meusnier}).
  \item The induced curve is not a geodesic in general. For the osculating paraboloid, it is a geodesic at $\v y_0$ for tangent rays and for rays whose tangential parts are principal directions, and when $\cM$ is locally convex near $\v y_0$ and $\v x_0$ lies along the outward normal (\cref{asm:curv:convex_side}), it converges to a geodesic as $\zeta\to\infty$ (\cref{thm:curv:local_geodesic,thm:curv:r_convergence}). The third-order terms of $\cM$ add a geodesic curvature that does not vanish in this limit (\cref{thm:curv:third_order}); it belongs to the induced curve at $\v y_0$, so a smaller step size does not remove it.
  \item The second fundamental form can be reconstructed by least squares from $\cO(d^2\ln d)$ induced curves (\cref{alg:curv:recon_2nd}). With the curvatures of the induced curves, the error is $\cO(\ln d/\zeta^2)$ for the osculating paraboloid (\cref{thm:curv:error_alg1}), and of order $1/(d\zeta^2)$ in practice (\cref{rmk:curv:alg1_rate}); with their normal curvatures (\cref{rmk:curv:meusnier}), the bias disappears and only the error of the finite differences remains, below $2\times10^{-7}$ in \cref{fig:curv:high_dim}(c); no convexity is needed.
  \item The principal curvatures follow directly from the eigenvalues of a $d\times d$ matrix built from $d$ rays, at a cost of $\cO(d^3)$ besides the projections, without normal information and without any convexity assumption (\cref{thm:curv:recon_direction,thm:curv:error_alg2}). The relative error is $\cO(h^2/\zeta)$ for tangent rays and $\cO\big((h/\zeta)\sqrt{\ln(d)/d}\big)$ for random rays, with probability at least $1-1/d$.
  \item In codimension $d_{\mathrm{co}}$, a probing point only recovers the shape operator in the normal direction along its offset. Other points on the normal disk produce the other normal directions, one at a time, and the rays of the first probing point can estimate the tangent space (\cref{alg:curv:codim,thm:curv:codim}). With the Euclidean projection, the error is $\cO(h/\zeta)+\cO(h)$ at every offset within the reach, and the central second differences of \cref{rmk:curv:codim_meusnier} gave errors decreasing like $h^2$ in our tests.
\end{enumerate}
The experiments confirm these points. On the unit sphere, the error of the direct algorithm decreases like $h/\zeta$, as predicted, and like $h^2$ with central differences (\cref{fig:curv:sphere_surface_compare}). On the graph of \eqref{eqn:curv:test_function}, it recovers all principal curvatures with errors below $2.1\times10^{-4}$ up to $d = 128$ from $d+1$ projections, and the reconstruction of the second fundamental form shows the bias of the third-order terms, which the normal curvature removes (\cref{fig:curv:high_dim}). For surfaces in up to $100$ dimensions, the second fundamental form is recovered with relative errors below $1.4\times10^{-3}$ for $h = 10^{-3}$ in every codimension (\cref{fig:curv:codim}). On point clouds, the nearest neighbor projection adds an error of order $\varepsilon/h$ for an $\varepsilon$-sample, which the experiments on spheres confirm (\cref{thm:curv:curv_ptcloud,fig:curv:ptcloud_bound}); in $\bbR^3$, the direct algorithm with the practical adjustments stays within a modest error, but quadratic fitting after local principal component analysis is more accurate there (\cref{fig:curv:pca_compare}).

Overall, the closest point projection connects the differential geometry of a manifold with practical computation: the curvature follows from closest point queries alone, with error bounds for exact projections and for point clouds. The curvature of the data manifold changes even local linear regression~\cite{liu2023linear}, and the estimates of this paper are a first step to measure it from data.

Some questions are left for future work: an analysis of the estimates on point clouds that informs the choice of step sizes $h$ and offset $\zeta$, the point-cloud version of \cref{alg:curv:codim}, and adaptive choices of rays and step sizes on surfaces with strongly varying curvature.

\appendix
\section{Error analysis of the reconstruction of the second fundamental form}\label{sec:curv:app_alg1}
This appendix gives the conditioning of the design matrix $\Psi$ of \cref{sec:curv:alg_recon_error} and the proof of \cref{thm:curv:error_alg1}. Each row of $\Psi$ is the image $\nu_2(\v t_i) = \v t_i\otimes\v t_i$ under the Veronese mapping of degree $2$, which has the following instrumental property in analyzing $\sigma_{\min}(\Psi)$.
\begin{lemma}[Isotropic embedding of the unit sphere]\label{thm:curv:isotropic}
The degree-$2$ Veronese embedding $\nu_2(\v r)\in\bbR^{k^2}$ of uniformly distributed $\v r\in\bbS^{k-1}$ is isotropic, i.e., when restricted to its intrinsic embedding subspace $V$ (the vectorized symmetric traceless $k\times k$ matrices),
\begin{equation}\label{eqn:curv:isotropic}
  \Cov[\nu_2(\v r)]\evalmid{}{V} = \frac{2}{k(k+2)} I_V,
\end{equation}
where $\Cov$ denotes the covariance operator and $I_V$ is the identity on $V$.
\end{lemma}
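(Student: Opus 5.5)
The plan is to identify $\nu_2(\v r)=\v r\otimes\v r$ with the symmetric matrix $\v r\v r^T$ under the Frobenius inner product. Then the covariance restricted to $V$ is determined by the quadratic form $X\mapsto\bbE\big[\inpd{\v r\v r^T-\bbE\,\v r\v r^T}{X}_F^2\big]$ for symmetric traceless $X$. First I will compute the mean. By rotational invariance, $\bbE[\v r\v r^T]=\frac1kI_k$. The mean is therefore a multiple of the identity, which is orthogonal to $V$ because $\inpd{I}{X}_F=\tr X=0$. Consequently, for $X\in V$,
\[
\inpd{X}{\Cov[\nu_2(\v r)]X}=\bbE\big[(\v r^TX\v r)^2\big],
\]
and the mean drops out entirely. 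This is why the restriction to $V$ is essential: the trace direction carries the variance of $\norm{\v r}^2$, which is zero, so the full covariance is not isotropic.

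Next I will compute $\bbE[(\v r^TX\v r)^2]$ from the fourth moments of the uniform distribution on the sphere. The cleanest route is the Gaussian representation already used in \cref{sec:curv:concentration}: write $\v r=\v X/\norm{\v X}$ with $\v X$ standard Gaussian in $\bbR^k$. Since $\norm{\v X}$ is independent of $\v X/\norm{\v X}$,
\[
\bbE[(\v X^TX\v X)^2]=\bbE\norm{\v X}^4\,\bbE[(\v r^TX\v r)^2],\qquad \bbE\norm{\v X}^4=k(k+2).
\]
Isserlis' theorem gives $\bbE[(\v X^TX\v X)^2]=(\tr X)^2+2\tr(X^2)=2\norm{X}_F^2$ for traceless $X$. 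Hence
\[
\bbE[(\v r^TX\v r)^2]=\frac{2}{k(k+2)}\norm{X}_F^2 .
\]

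Finally, polarization turns this quadratic form into the operator identity $\Cov|_V=\frac{2}{k(k+2)}I_V$. Two points need care here. The first is that $V$ is invariant under the covariance operator. This holds because the covariance maps into the span of the centered matrices $\v r\v r^T-\frac1kI$, which are traceless, and it is symmetric. The second is that the identification of $\bbR^{k^2}$ vectorizations with the Frobenius inner product is isometric, so no off-diagonal factor of $2$ appears. I expect this bookkeeping about the ambient space $\bbR^{k^2}$ versus $V$ to be the only delicate step. The moment computation itself is routine once the Gaussian decoupling is in place.
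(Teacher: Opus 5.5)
Your proof is correct, but it takes a different route from the paper. The paper never computes a fourth moment. It observes that the centered vector $\bs\tau(\v r)=\vect(\v r\v r^T-\frac1kI_k)$ satisfies $\bs\tau(U\v r)=(U\otimes U)\bs\tau(\v r)$, so the covariance commutes with the orthogonal action of $O(k)$ on $V$. Since $V$ is an irreducible representation, Schur's lemma forces $\Cov|_V=cI_V$. The constant $c$ is then read off from a single trace, $\tr\Cov=\bbE\norm{\v r\v r^T-\frac1kI_k}_F^2=1-\frac1k$, divided by $\dim V=\frac{(k-1)(k+2)}{2}$.

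You instead evaluate the quadratic form $\bbE[(\v r^TX\v r)^2]$ explicitly, through the Gaussian decoupling $\v X=\norm{\v X}\v r$ and Isserlis' formula, and then polarize. Your bookkeeping covers what the polarization step needs: the mean is orthogonal to $V$, the covariance maps into $V$ and is self-adjoint, and vectorization is an isometry for the Frobenius product.

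The paper's argument is shorter and uses only second-order information. However, it relies on the irreducibility of the symmetric traceless matrices under $O(k)$, which it cites rather than proves. Your argument is elementary and self-contained, and it gives more. Without the traceless assumption, the same computation yields $\bbE[(\v r^TX\v r)^2]=\big((\tr X)^2+2\norm{X}_F^2\big)/(k(k+2))$ for every symmetric $X$. This shows directly that the variance vanishes along $\vect(I_k)$, and it describes the second moment of $\nu_2(\v r)$ on all symmetric matrices, which is the setting of \cref{thm:curv:conditioning}.
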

\begin{proof}
Since $\nu_2(\v r)=\v r\otimes \v r = \vect(\v r\v r^T)$, where $\vect$ is the vectorization operator and is a linear isomorphism, $\bbE[\nu_2(\v r)] = \vect\big(\bbE[\v r\v r^T]\big)$. By rotational invariance of the uniform distribution on $\bbS^{k-1}$, $\bbE[\v r\v r^T] = \frac{1}{k}I_k$, so $\bbE[\nu_2(\v r)] = \vect(\frac{1}{k}I_k)$. Define the centered variable
\[
  T(\v r) \defeq \v r\v r^T - \frac{1}{k}I_k,
  \quad \bs\tau(\v r) \defeq \vect\big(T(\v r)\big).
\]
Since $\tr(\v r\v r^T)=1$, we have $\tr(T)=0$, so $T$ lies in the space of real symmetric traceless matrices, a $(\frac{k(k+1)}{2}-1)$-dimensional subspace of $\bbR^{k\times k}$, and $\bs\tau$ lives in a subspace $V\subset\bbR^{k^2}$ of the same dimension.
For any $U\in O(k)$, the properties of the Kronecker product~\cite{schacke2004kronecker, magnus2019matrix} give
$\nu_2(U\v r) = (U\otimes U)\nu_2(\v r)$, where $U\otimes U$ is orthogonal since $(U\otimes U)^{-1} = U^T\otimes U^T = (U\otimes U)^T$. Likewise $\bs\tau(U\v r) = (U\otimes U)\bs\tau(\v r)$. Since $U\v r\disteq\v r$, we conclude $\bs\tau(\v r)\disteq(U\otimes U)\bs\tau(\v r)$: the covariance operator $\Cov[\bs\tau]$ is invariant under the natural orthogonal action $U\otimes U$ of $O(k)$ on $V$. From representation theory~\cite{fulton2013representation}, the space $V$ of symmetric traceless matrices forms an irreducible representation of $O(k)$. By Schur's Lemma~\cite{dummit2004abstract}, $\Cov[\bs\tau]\evalmid{}{V} = c I_V$ for some constant $c>0$. To determine $c$, note $\normbig{\v r\v r^T-\frac{1}{k}I_k}_F^2 = 1 - \frac{2}{k} + \frac{1}{k} = 1-\frac1k$, so
\[
  \tr\big(\Cov[\bs\tau]\big) = \frac{k-1}{k} = c\Big(\frac{k(k+1)}{2}-1\Big)
  \implies c = \frac{2}{k(k+2)}.
\]
\end{proof}

The isotropic property indicates that the sample covariance concentrates around its mean~\cite{rudelson1999random,oliveira2010sums,tropp2012user}. Because the covariance is the identity scaled by $2/(k(k+2))$, this concentration controls $\sigma_{\min}$ at the scale $\sqrt{2M/(k(k+2))}$.

\begin{proposition}[Conditioning of the design]\label{thm:curv:conditioning}
Let $\v u_1,\dots,\v u_M$ be independent and uniform on $\bbS^{k-1}$, $k\ge2$, and let $\Psi_0\in\bbR^{M\times k^2}$ have rows $\nu_2(\v u_i)^T$. There is an absolute constant $C$ such that, if $M\ge Ck^2\ln(k/p)$ for some $p\in(0,1)$, then with probability at least $1-p$
\begin{equation}\label{eqn:curv:conditioning}
  \norm{\Psi_0\bs\xi}\ge\sqrt{\frac{M}{2k(k+2)}}\,\norm{\bs\xi}
  \qquad\text{for every vectorized symmetric $k\times k$ matrix $\bs\xi$},
\end{equation}
that is, $\norm{\Psi_0^\dagger}\le\sqrt{2k(k+2)/M}$ on the symmetric matrices.
\end{proposition}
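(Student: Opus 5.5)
The plan is to apply a matrix Chernoff lower-tail bound to the Gram matrix $\Psi_0^T\Psi_0=\sum_{i=1}^M\nu_2(\v u_i)\nu_2(\v u_i)^T$, viewed as an operator on the space $\mathrm{Sym}$ of vectorized symmetric $k\times k$ matrices. This space has dimension $n=k(k+1)/2$ and inherits the Euclidean (Frobenius) inner product from $\bbR^{k^2}$. Every $\nu_2(\v u_i)=\vect(\v u_i\v u_i^T)$ lies in $\mathrm{Sym}$, so each summand is a positive semidefinite rank-one operator on $\mathrm{Sym}$. Its norm is $\norm{\v u_i\v u_i^T}_F^2=1$, so the uniform bound required by the Chernoff inequality is $R=1$. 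The claim $\norm{\Psi_0\bs\xi}^2=\bs\xi^T\Psi_0^T\Psi_0\bs\xi\ge\frac{M}{2k(k+2)}\norm{\bs\xi}^2$ is then exactly a lower bound on $\lambda_{\min}$ of this sum restricted to $\mathrm{Sym}$.

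The first step computes the second moment $\Sigma\defeq\bbE[\nu_2(\v r)\nu_2(\v r)^T]$ on $\mathrm{Sym}$. I split $\mathrm{Sym}$ orthogonally into the traceless part $V$ and the line spanned by $\v e\defeq\vect(I_k)/\sqrt k$; orthogonality holds because $\inpd{\vect(T)}{\vect(I_k)}=\tr T$.
\begin{itemize}
\item Since $\bbE[\nu_2(\v r)]=\vect(I_k/k)=\v e/\sqrt k$ is orthogonal to $V$, \cref{thm:curv:isotropic} gives $\Sigma|_V=\Cov[\nu_2(\v r)]|_V=\frac{2}{k(k+2)}I_V$.
\item In the direction $\v e$ we have $\inpd{\nu_2(\v r)}{\v e}=\tr(\v r\v r^T)/\sqrt k=1/\sqrt k$ deterministically, so $\v e^T\Sigma\v e=1/k$.
\item The cross terms vanish: for $\v v\in V$, $\bbE[\inpd{\nu_2}{\v v}\inpd{\nu_2}{\v e}]=k^{-1/2}\inpd{\bbE\nu_2}{\v v}=0$.
\end{itemize}
Hence $\Sigma$ is block diagonal with $\mu_{\min}(\Sigma)=\min\{\frac{2}{k(k+2)},\frac1k\}=\frac{2}{k(k+2)}$. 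This step is the one needing care: the statement is over all symmetric matrices, not only the traceless ones where \cref{thm:curv:isotropic} applies, so the trace direction and the cross terms must be handled as above.

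The second step is the concentration bound. Tropp's matrix Chernoff inequality~\cite{tropp2012user}, applied to $M$ independent PSD summands on the $n$-dimensional space $\mathrm{Sym}$ with $R=1$, gives
\[
\bbP\Big(\lambda_{\min}\big(\Psi_0^T\Psi_0\big)\le(1-\delta)M\mu_{\min}\Big)\le n\exp\big(-\delta^2M\mu_{\min}/2\big).
\]
I take $\delta=\frac12$. The failure probability is then at most $\frac{k^2}{2}\exp\!\big(-\frac{M}{4k(k+2)}\big)$, which is at most $p$ once $M\ge4k(k+2)\ln(k^2/p)$. Since $k(k+2)\le 2k^2$ for $k\ge2$ and $\ln(k^2/p)\le2\ln(k/p)$, this holds whenever $M\ge Ck^2\ln(k/p)$ with an absolute $C$ (for instance $C=16$).

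On the complementary event, $\lambda_{\min}(\Psi_0^T\Psi_0)\ge\frac{M}{k(k+2)}$, which is even stronger than the claimed $\frac{M}{2k(k+2)}$. Taking square roots gives \eqref{eqn:curv:conditioning}. Since $\Psi_0$ is then injective on $\mathrm{Sym}$, the bound on $\norm{\Psi_0^\dagger}$ restricted to the symmetric matrices follows as the reciprocal of the smallest singular value.
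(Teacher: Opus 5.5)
\textbf{Verdict.} Your proof is correct, and it takes a different and simpler route than the paper.

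\textbf{The paper's route.} The paper centers first. It writes $\nu_2(\v u_i)=\vect(I_k/k)+\bs\tau_i$ with $\bs\tau_i\in V$ and applies the matrix Chernoff inequality only to $\sum_i\bs\tau_i\bs\tau_i^T$ on the traceless space $V$. It then has to control the cross term $\frac{2a}{\sqrt k}\inpd{\sum_i\bs\tau_i}{\bs\xi_V}$ separately. For this it uses a bounded-differences bound $\norm{\sum_i\bs\tau_i}\le\beta\sqrt M$ and the split $2xy\le\frac12x^2+2y^2$. This costs a factor $2$ in the final constant and adds the requirement $Mc\ge8\beta^2$, with $c=\frac{2}{k(k+2)}$.

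\textbf{Your route.} You apply Chernoff once, to the uncentered Gram matrix on all of $\mathrm{Sym}$. Your key observation is that $\inpd{\nu_2(\v r)}{\v e}=1/\sqrt k$ is deterministic. This makes the second moment block diagonal, $\Sigma=c\Proj_V+\frac1k\v e\v e^T$, so the cross term never appears. The mean only adds the eigenvalue $1/k\ge c$ in the trace direction and therefore does not lower $\mu_{\min}$.

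\textbf{Comparison.} With $\delta=\frac12$ you get the stronger bound $\norm{\Psi_0\bs\xi}^2\ge\frac{M}{k(k+2)}\norm{\bs\xi}^2$ from a single concentration inequality, with the explicit constant $C=16$. The paper's centering is natural because \cref{thm:curv:isotropic} is stated for the covariance, but here it only creates the cross term that must then be handled. The paper's term $Ma^2/k$ encodes the same deterministic trace fact that you exploit directly.

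\textbf{Minor slip.} $\dim\mathrm{Sym}=k(k+1)/2$ exceeds $k^2/2$, so the Chernoff prefactor should be $k(k+1)/2\le k^2$, not $k^2/2$. Your condition $M\ge4k(k+2)\ln(k^2/p)$ already makes $k^2e^{-M/(4k(k+2))}\le p$, so the conclusion is unaffected.
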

\begin{proof}
Write $\nu_2(\v u_i) = \vect(\frac1kI_k)+\bs\tau_i$ with $\bs\tau_i\in V$ as in \cref{thm:curv:isotropic}, $\norm{\bs\tau_i}^2 = 1-1/k$, and let $\v e = \vect(I_k)/\sqrt k$, a unit vector orthogonal to $V$; the symmetric matrices are $V\oplus\operatorname{span}\{\v e\}$. For $\bs\xi = \bs\xi_V+a\v e$ we have $\inpd{\nu_2(\v u_i)}{\bs\xi} = \inpd{\bs\tau_i}{\bs\xi_V}+a/\sqrt k$, so
\[
  \norm{\Psi_0\bs\xi}^2 = \sum_i\inpd{\bs\tau_i}{\bs\xi_V}^2+\frac{2a}{\sqrt k}\Big\langle\sum_i\bs\tau_i,\bs\xi_V\Big\rangle+\frac{Ma^2}{k}.
\]
The matrices $\bs\tau_i\bs\tau_i^T$ on $V$ are independent, positive semidefinite, of norm at most $1$, with mean $cI_V$, $c = 2/(k(k+2))$. The matrix Chernoff inequality~\cite[Thm.~1.1]{tropp2012user} gives $\sum_i\inpd{\bs\tau_i}{\bs\xi_V}^2\ge\frac12Mc\norm{\bs\xi_V}^2$ for all $\bs\xi_V$, with probability at least $1-\dim V\,e^{-0.15Mc}$. The vectors $\bs\tau_i$ are independent, centered and bounded by $1$, so by the bounded differences inequality $\norm{\sum_i\bs\tau_i}\le\beta\sqrt M$ with $\beta = 1+\sqrt{2\ln(2/p)}$, with probability at least $1-p/2$. On both events, $2xy\le\frac12x^2+2y^2$ with $x = \abs a\sqrt{M/k}$, $y = \beta\norm{\bs\xi_V}$ gives
\[
  \norm{\Psi_0\bs\xi}^2\ge\Big(\frac{Mc}2-2\beta^2\Big)\norm{\bs\xi_V}^2+\frac{Ma^2}{2k}\ge\frac{Mc}{4}\big(\norm{\bs\xi_V}^2+a^2\big),
\]
as soon as $Mc\ge8\beta^2$; note $M/(2k)\ge Mc/4$. Both conditions, and $\dim V\,e^{-0.15Mc}\le p/2$, hold when $M\ge Ck^2\ln(k/p)$.
\end{proof}

\Cref{fig:curv:Psi_rate} confirms the scale of $\norm{\Psi^\dagger}$ for uniformly distributed unit directions: for $M = 2d^2$ it tends to a constant close to $1$, and for fixed $d$ it decreases like $(\sqrt{2c}-1)^{-1}$ in the multiplier $c$ of $M = cd^2$, in agreement with the edge of the Marchenko--Pastur law for rows with covariance $2/(d(d+2))$.
\begin{figure}[htbp]
  \centering
  \includegraphics[width=0.9\textwidth]{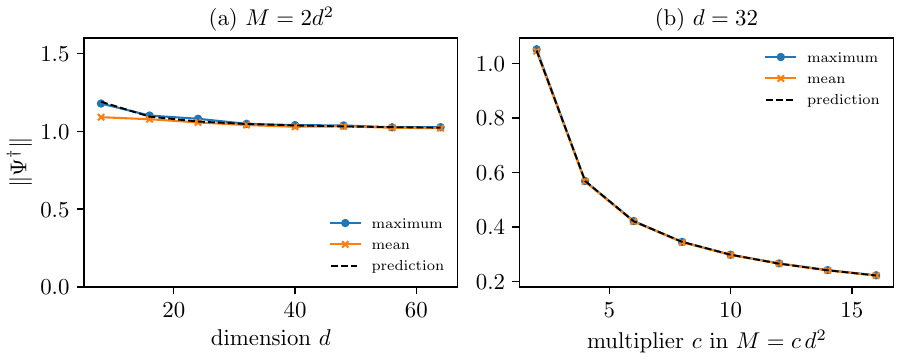}
  \caption{Norm of $\Psi^\dagger$ on the symmetric matrices for rows $\v r_i\otimes\v r_i$, $\v r_i$ uniform on $\bbS^{d-1}$: maximum and mean over $5$ simulations. (a) $M = 2d^2$, varying $d$. (b) $d = 32$, varying the multiplier $c$ in $M = cd^2$. Dashed: the prediction $\sqrt{d(d+2)/2}/(\sqrt M-\sqrt{d(d+1)/2})$.}
  \label{fig:curv:Psi_rate}
\end{figure}

\begin{proof}[Proof of \cref{thm:curv:error_alg1}]
Work in an orthonormal basis of $\cT_{\v y}\cM$, so that the unknown is a symmetric $(d-1)\times(d-1)$ matrix. By rotational symmetry about $\v n$, the normalized tangential parts $\v u_i$ of the rays are independent and uniform on the unit sphere of $\cT_{\v y}\cM$, and by \cref{thm:curv:parametrization} the measured directions are $\v t_i = G\v u_i/\norm{G\v u_i}$ with $G = (I-2\zeta A)^{-1}$. Hence $\v t_i\otimes\v t_i = (G\otimes G)(\v u_i\otimes\v u_i)/\norm{G\v u_i}^2$, and since $G$ is symmetric, the $i$-th entry of $\Psi\bs\xi$ is that of $\Psi_0(G\otimes G)\bs\xi$ divided by $\norm{G\v u_i}^2\le\sigma_{\max}(G)^2$. The map $G\otimes G$ sends symmetric matrices to symmetric matrices and shrinks them by at most the factor $\sigma_{\min}(G)^2$, so for every symmetric $\bs\xi$
\[
  \norm{\Psi\bs\xi}\ge\frac{\norm{\Psi_0(G\otimes G)\bs\xi}}{\sigma_{\max}(G)^2}
  \ge\sqrt{\frac{M}{2k(k+2)}}\,\frac{\sigma_{\min}(G)^2}{\sigma_{\max}(G)^2}\,\norm{\bs\xi},
  \qquad k = d-1,
\]
on the event of \cref{thm:curv:conditioning}. Under \cref{asm:curv:convex_side} the condition number of $G$ is $(1+2\zeta\abs{\lambda_{\max}})/(1+2\zeta\abs{\lambda_{\min}})\le\tau_A$. With $p = 1/d$ in \cref{thm:curv:conditioning}, $\norm{\Psi^\dagger}\le\tau_A^2\sqrt{2(d-1)(d+1)/M}$ with probability at least $1-1/d$. Combining with $\norm{\Omega}\le\sqrt M\max_i\omega_i$, the factor $\sqrt M$ cancels:
\begin{equation}\label{eqn:curv:alg1_bound}
  \norm{\hat S_{\v y}-S_{\v y}}_F = \norm{\xi-\hat\xi}\le\norm{\Psi^\dagger}\norm{\Omega}\le\tau_A^2\sqrt{2(d^2-1)}\,\max_i\omega_i .
\end{equation}
By the proof of \cref{thm:curv:r_convergence}, $\omega_i\le\mathrm{Res}_i^2/(2\abs{\kappa_{n,i}})\lesssim\gamma_i^2\tau_A^2(\tau_A-1)^2/(2\zeta^2\abs{\kappa_{n,i}})$, and with probability at least $1-1/d$ all $\gamma_i^2\le4\ln(2dM)/(d-2)$ by \eqref{eqn:curv:slope_bound}. Since $\ln(2dM) = \cO(\ln d)$, this gives $\norm{\hat S_{\v y}-S_{\v y}}_F = \cO(\ln(d)/\zeta^2)$ with probability at least $1-2/d$. Finally, since both matrices are symmetric, the Hoffman--Wielandt inequality~\cite{hoffman1953variation} gives $\sum_i\abs{\hat\kappa_i-\kappa_i}^2\le\norm{\hat S_{\v y}-S_{\v y}}_F^2$, and the individual bound follows.
\end{proof}

\section*{Acknowledgments}
Liu and Tsai are partially supported by National Science Foundation Grant DMS-2110895. Tsai is also partially supported by Army Research Office Grant W911NF2320240. We thank Dan Knopf and Ziheng Chen for their insights and discussions in the early stage of this work. We are tremendously saddened by the untimely passing of Dan Knopf, and we extend our deepest condolences to his family.

\bibliographystyle{siamplain}
\bibliography{references}

\end{document}